\newtheorem{theorem}{Theorem}[section]
\newtheorem{lemma}[theorem]{Lemma}
\newtheorem{proposition}[theorem]{Proposition}
\newtheorem{corollary}[theorem]{Corollary}
\theoremstyle{definition}
\newtheorem{definition}[theorem]{Definition}
\newtheorem{example}[theorem]{Example}
\newtheorem{conjecture}[theorem]{Conjecture}
\newtheorem{remark}[theorem]{Remark}
\newcommand{\V}{\mathcal{V}}
\newcommand{\Ext}{\text{Ext}}
\newcommand{\Tr}{\text{Tr}}%{\text{Tr}\,}
\newcommand{\id}{\text{id}}
\newcommand{\Ker}{\text{Ker\,}}
\newcommand{\End}{\text{End}}
\newcommand{\Hom}{\text{Hom}}
\newcommand{\Ind}{\text{Ind}}
\newcommand{\Rep}{\text{Rep}}
\newcommand{\mm}{\mathfrak{m}}
\newcommand{\mR}{{\mathcal R}}
\newcommand{\kk}{{\bold k}}
\newcommand{\C}{\mathcal{C}}
\newcommand{\ot}{\otimes}
\newcommand{\ben}{\begin{enumerate}}
\newcommand{\een}{\end{enumerate}}
\newcommand{\Vect}{{{\bf Vec}}}
\newcommand{\be}{{\bf 1}}
\theoremstyle{plain}
\newtheorem*{sol}{Solution}
\theoremstyle{definition}
\theoremstyle{remark}
\newcommand{\solu}[1]{\begin{sol}{\bf (\ref{#1})}}
\def\C{\mathcal{C}}
\def\D{\mathcal{D}}
\def\Vect{{\bf Vec}}
\def\P{\mathcal{P}}
\def\N{\mathcal{N}}
\def\End{\mathrm{End}}
\def\Hom{\mathrm{Hom}}
\def\Ker{\mathrm{Ker}}
\def\Ext{\mathop{\mathrm{Ext}}\nolimits}
\def\A{\mathcal{A}}
\def\Vec{\mathrm{Vec}}
\def\sVec{\text{sVec}}
\def\Ver{\mathrm{Ver}}
\def\k{\mathbf{k}}
\def\Rep{\mathop{\mathrm{Rep}}\nolimits}
\def\FPdim{\mathop{\mathrm{FPdim}}\nolimits}
\newcommand{\BZ}{{\mathbb Z}}
\newcommand{\BR}{{\mathbb R}}
\newcommand{\cP}{\mathcal{P}}
\newcommand{\Fr}{{\rm Fr}}
\begin{document}

\title{On the Frobenius functor for symmetric tensor categories in positive characteristic}

\author{Pavel Etingof}
\address{Department of Mathematics, Massachusetts Institute of Technology,
Cambridge, MA 02139, USA}
\email{etingof@math.mit.edu}
\author{Victor Ostrik}
\address{Department of Mathematics,
University of Oregon, Eugene, OR 97403, USA}
\address{Laboratory of Algebraic Geometry,
National Research University Higher School of Economics, Moscow, Russia}
\email{vostrik@uoregon.edu}

\begin{abstract} We develop a theory of Frobenius functors for symmetric tensor categories (STC) $\C$ over a field $\bold k$ of characteristic $p$, and give its applications to classification of such categories. Namely, we define a twisted-linear symmetric monoidal functor $F: \mathcal{C}\to \mathcal{C}\boxtimes {\rm Ver}_p$, where ${\rm Ver}_p$ is the Verlinde category (the semisimplification of $\Rep_\k(\mathbb{Z}/p)$); a similar construction of the underlying additive functor appeared independently in \cite{Co}. This generalizes the usual Frobenius twist functor in modular representation theory and also the one defined in \cite{O}, where it is used to show that if $\C$ is finite and semisimple then it admits a fiber functor to ${\rm Ver}_p$. The main new feature is that when $\mathcal{C}$ is not semisimple, $F$ need not be left or right exact, and in fact this lack of exactness is the main obstruction to the existence of a fiber functor $\mathcal{C}\to {\rm Ver}_p$. We show, however, that there is a 6-periodic long exact sequence which is a replacement for the exactness of $F$, and use it to show that for categories with finitely many simple objects $F$ does not increase the Frobenius-Perron dimension. We also define the notion of a Frobenius exact category, which is a STC on which $F$ is exact, and define the canonical maximal Frobenius exact subcategory $\mathcal{C}_{\rm ex}$ inside any STC $\mathcal{C}$ with finitely many simple objects. Namely, this is the subcategory of all objects whose Frobenius-Perron dimension is preserved by $F$. One of our main results is that a finite STC is Frobenius exact if and only if it admits a (necessarily unique) fiber functor to ${\rm Ver}_p$. This is the strongest currently available characteristic $p$ version of Deligne's theorem (stating that a STC of moderate growth in characteristic zero is the representation category of a supergroup). We also show that a sufficiently large power of $F$ lands in $\mathcal{C}_{\rm ex}$. Also, in characteristic 2 we introduce a slightly weaker notion of an almost Frobenius exact category (namely, one having a fiber functor into the category of representations of the triangular Hopf algebra $\bold k[d]/d^2$ with $d$ primitive and $R$-matrix $R=1\otimes 1+d\otimes d$), and show that a STC with Chevalley property is (almost) Frobenius exact. Finally, as a by-product, we resolve Question 2.15 of \cite{EG}.
\end{abstract} 

\maketitle 

%\centerline{\bf DRAFT}

\section{Introduction} 

\subsection{} Let $\k$ be an algebraically closed field of characteristic $p>0$. An important role
in the theory of group schemes over $\k$ (and more generally in algebraic geometry over $\k$)
is played by the {\em Frobenius morphism}, see e.g. \cite[I.9]{Ja}. Namely, for any group scheme $G$
over $\k$ we have a homomorphism $F: G\to G^{(1)}$ where $G^{(1)}$ is a suitable twist of $G$,
see {\em loc. cit.} 

Assume now that $G$ is affine.
Taking the pullback, we get the {\em Frobenius twist} functor $\Rep(G^{(1)})\to \Rep(G)$
between the corresponding representation categories, see \cite[I.9.10]{Ja}.
Note that $\Rep(G^{(1)})=\Rep(G)^{(-1)}$ where the only difference between the categories
$\Rep(G)$ and $\Rep(G)^{(-1)}$ is the $\k-$linear structure on the $\Hom-$spaces: the multiplication
by $\lambda$ in $\Rep(G)$ is replaced by the multiplication by $\lambda^{1/p}$ in $\Rep(G)^{(-1)}$.
Equivalently, we can think that the Frobenius twist is an exact symmetric tensor functor 
$\Rep(G)\to \Rep(G)^{(1)}$ where the $\k-$linear structure of the category $\Rep(G)$ is twisted
by the Frobenius automorphism $\lambda \mapsto \lambda^p$ in the category $\Rep(G)^{(1)}$.
%This is an (exact) symmetric tensor functor. 
The goal of this paper is to introduce and investigate a counterpart of the Frobenius twist functor for symmetric tensor categories more general than $\Rep(G)$.

\subsection{} In the case of a {\em semisimple} symmetric tensor category $\C$ a counterpart
of the Frobenius twist functor was proposed in \cite{O}. A new and somewhat unexpected feature
of the Frobenius functor from \cite{O} is that it takes values in the product $\C^{(1)}\boxtimes \Ver_p$
where $\Ver_p$ is the Verlinde category (the semisimplification of the category $\Rep_\k(\Bbb Z/p)$). Generalizing this construction, in this paper we define a $\k-$linear functor
$F: \C \to \C^{(1)}\boxtimes \Ver_p$ for an abelian symmetric tensor category $\C$ over $\k$ which is not necessarily semisimple, and show that this functor has a natural monoidal structure. 

Unfortunately, in general the functor $F$ is neither left nor right exact. Thus a significant part of this paper is devoted to the study of the behavior of $F$ on exact sequences.

\subsection{} Let us describe some of our results in the special case $p=2$. Thus let
$\C$ be a symmetric tensor category over a field $\k$ of characteristic $2$. 
For an object $X\in \C$ we have
the symmetry $s: X\ot X\to X\ot X$ satisfying $s^2=1$, equivalently $(1-s)^2=0$. We define
$$F(X):=\Ker(1-s)/\mbox{Im}(1-s)$$ 
to be the cohomology of the morphism $1-s$. 
Since $s$ is functorial, we have an obvious structure of a functor on $F$. In this paper we prove
that the functor $F$ enjoys the following properties.

1) $F$ is additive.

2)  $F$ has a natural structure of a monoidal functor.

3) The functor $F$ is not left or right exact, in general. However it has the following property: if
$0\to X \to Y \to Z\to 0$ is a short exact sequence then we have a 3-periodic long exact sequence
$$
\ldots \to F(X)\to F(Y) \to F(Z)\to F(X)\to \ldots
$$

We say that a symmetric tensor category is {\em Frobenius exact} if the functor $F$ is exact. E.g., it is clear that a category of representations of an affine group scheme is Frobenius exact. 
One of our main results is that conversely, any Frobenius exact finite symmetric tensor category is the representation category of a (finite) group scheme, i.e., admits a (necessarily unique) fiber functor to $\Vec_\k$. Moreover, we conjecture that this is true without the finiteness assumption when the category has moderate growth. 

There exist symmetric tensor categories that are not Frobenius exact, e.g. 
the categories $\C_n$ constructed in \cite{BE} in characteristic $2$, or their generalizations to any positive characteristic (\cite{BEO}). For general finite symmetric tensor categories $\C$ (possibly not Frobenius exact) we prove the existence of the maximal Frobenius exact subcategory $\C_{\rm ex}\subset \C$ using the interplay between the Frobenius functor and the Frobenius-Perron dimension. 

Next we proceed to show that if $\C\subset \D$ are finite symmetric tensor categories such that $\C$ contains all simple objects of $\D$ and $\C$ is Frobenius exact then $\D$ is {\it almost Frobenius exact}, i.e., admits a fiber functor to the category $\V$ of representations of the Hopf algebra $\k[d]/d^2$ with symmetry defined by the $R$-matrix  $R=1\otimes 1+d\otimes d$. This generalizes the result of \cite{EG3} 
that any finite symmetric tensor category with Chevalley property is almost Frobenius exact. 

\subsection{} Let us now briefly summarize our results for $p>2$. In this case, similarly to \cite{O}, the Frobenius functor is more complicated, and is a functor $\C\to \C^{(1)}\boxtimes {\rm Ver}_p$, where ${\rm Ver}_p$ is the Verlinde category; it has the form 
$$
F(X)=\oplus_{i=1}^{p-1} F_i(X)\otimes L_i
$$
where $L_i$ are the simple objects of ${\rm Ver}_p$. Nevertheless, it is still additive and monoidal (generalizing properties 1,2 above), and there is also an analog of property 3, a 6-periodic long exact sequence (see Subsection \ref{6per}). This allows us to prove versions of all the above results for $p>2$. In particular, we show that a finite symmetric tensor category $\C$ is Frobenius exact if and only if it admits a (necessarily unique) fiber functor to ${\rm Ver}_p$.
Thus such a category is equivalent to the category of representations of an affine group scheme in ${\rm Ver}_p$ (compatible with the canonical action of $\pi_1({\rm Ver}_p)$). Furthermore, this is expected to hold more generally for categories of moderate growth. 

Moreover, if $\C$ is not necessarily Frobenius exact, we show that it contains a maximal Frobenius exact subcategory $\C_{\rm ex}$, which in the case $p>2$ turns out to be a Serre subcategory (so if $\C\subset \D$ with $\D$ containing the simple objects of $\C$ and $\C$ is Frobenius exact then so is $\D$). This implies that a finite symmetric tensor category with Chevalley property is Frobenius exact, so has a fiber functor to ${\rm Ver}_p$. Finally, for any $p$ we define the notion of the {\it Frobenius order} of $\C$, which is the smallest $n$ such that $F_\bullet^n(\C)\subset \C_{\rm ex}$ is Frobenius exact, and show that it is finite. 

\subsection{} The organization of the paper is as follows. 

Section 2 contains preliminaries. 

Sections 3 and 4 play an auxiliary role. 
Namely, in Section 3 we study additive functors on the category of 
$\k[D]/D^n$-modules (inside an abelian category). This is mostly linear algebra which is essentially well known, and reminiscent of Deligne's study of weight filtrations. 

In Section 4 we develop a theory of monoidal Deligne tensor products of monoidal categories in the case when one of the factors is only Karoubian and not necessarily abelian; this is needed in the study of Frobenius functors. 

In Section 5 we define the main object of study in this paper --- the Frobenius functor, generalizing the definition in \cite{O} to nonsemisimple categories. 

The properties of the Frobenius functor are studied in Section 6. In particular, we show that in categories with finitely many simple objects, the Frobenius functor does not increase the Frobenius-Perron dimension, a property important in applications. 

In Section 7 we define and study Frobenius exact categories, those on which the Frobenius functor is exact, and give several characterizations of this property.  

In Section 8 we state and prove one of our main results -- the characterization of Frobenius exact finite symmetric tensor categories as those admitting a (necessarily unique) fiber functor to the Verlinde category ${\rm Ver}_p$. This is the strongest currently available version of Deligne's theorem (\cite{De2,De3}) in characteristic $p$. Then we derive some corollaries; in particular, we classify finite Frobenius bijective categories. 

In Section 9 we show that for $p>2$, the Serre closure of a Frobenius exact category inside any symmetric tensor category is Frobenius exact, and deduce that any symmetric tensor category with Chevalley property is Frobenius exact (our second main result). We also provide versions of these results in characteristic $2$, using the results of \cite{EG3}. 

Finally, in Section 10 we develop the theory of Frobenius orders of symmetric tensor categories, and show that they are finite. 

The paper also has two appendices containing auxiliary results which, however, seem  interesting in their own right. 

Namely in Appendix A we show that the Hilbert series of a finitely generated commutative algebra in a symmetric tensor category with finitely many simple objects is convergent for $|z|<1$. 

Appendix B shows that an algebra $A$ in a finite tensor category $\C$ is exact if and only if it is simple and there is an embedding ${}^*A\subset A\otimes X$ as $A$-modules for some $X\in \C$. As a by-product, this yields a positive answer to Question 2.15 of \cite{EG}. 

\vskip .05in

{\bf Acknowledgements.} The authors are very grateful to D. Benson, whose discussion with the first author in Spring 2018 triggered this work. They also thank D. Arinkin, K. Coulembier, L. Positselski for useful discussions. The work of P. E. was partially supported by the NSF grant DMS-1502244. 
The work of V.~O. was partially supported by the HSE University Basic Research Program, Russian Academic Excellence Project '5-100' and by the NSF grant DMS-1702251.

\section{Preliminaries} 

\subsection{Conventions} 
Throughout the paper, $\k$ will denote an algebraically closed field, usually of characteristic $p>0$. 
We will freely use the theory of tensor categories and refer the reader to \cite{EGNO} for the basics of this theory. In particular, we will use the conventions of this book. 

Namely, by an {\it artinian}   (or {\it locally finite}, see \cite[1.8]{EGNO})   category over $\k$ we mean a $\k$-linear abelian category 
in which objects have finite length and morphism spaces are finite dimensional, and 
call such a category {\it finite} if it has finitely many (isomorphism classes of) simple objects and enough projectives.  By a  {\it tensor category} we will mean 
the notion defined in \cite{EGNO}, Definition 4.1.1; in particular, such categories are artinian (hence abelian). A semisimple finite tensor category is called a {\it fusion category}. 
We will denote the category of vector spaces by $\Vec_\k$ and for $p\ne 2$ the category of supervector spaces by $\sVec_\k$. If $X$ is an object of a tensor category with finitely many simple objects, then we denote by ${\rm FPdim}(X)$ the Frobenius-Perron dimension of $X$  (\cite{EGNO}, 3.3, 4.5).
Sometimes we will use categories with tensor product which are not abelian but only Karoubian; in this case we will use the term {\it Karoubian monoidal category}. Similarly, by a {\it tensor functor} we will mean exact monoidal functor between tensor categories, see \cite{EGNO}, Definition 4.2.5 (note that such a functor is automatically faithful, see \cite{EGNO}, Remark 4.3.10). In more general situations we will use the term {\it additive monoidal functor}. Also, a symmetric tensor functor out of a symmetric tensor category into another, usually quite concrete one (such as $\Vec_\k$, ${\rm sVec}_\k$ or ${\rm Ver}_p$ defined below) 
will be often called a {\it fiber functor}. 

\subsection{Extension of scalars in categories} Let $\D$ be a locally finite abelian category over $\k$; this means that $\D \simeq C-{\rm comod}$ where $C-{\rm comod}$ is the category of finite dimensional comodules over a $\k-$coalgebra $C$, see \cite[1.9]{EGNO}. Then the category ${\rm Ind}(\D)$ of ind-objects of $\D$ is equivalent
to the category $C-{\rm Comod}$ of possibly infinite dimensional $C-$comodules.
Let $\k \subset K$ be a field extension and let us consider the category ${\rm Ind}(\D)_K$ of ind-objects of $\D$ equipped with a $K-$action. It is clear that this category is equivalent to the category
$C_K-{\rm Comod}$ of comodules over $C_K:=K\otimes_\k C$, i.e., of $C-$comodules over $K$. The category $C_K-{\rm comod}$ of
finite dimensional $C-$comodules over $K$ identifies with the full subcategory $\D_K\subset 
{\rm Ind}(\D)_K$ consisting of objects of finite length. We have an obvious functor of extension of scalars $X\mapsto K\otimes X: {\rm Ind}(\D)\to {\rm Ind}(\D)_K$ and its right adjoint, the functor of restriction of
scalars forgetting the $K-$action.
Since the field $\k$ is algebraically closed, the simple objects of $\D_K$ are in the image of the extension of scalars; in particular the extension of scalars restricts to a functor $\D \to \D_K$.
If the category $\D$ is finite, the same is true for injective and projective objects of $\D_K$.
We will need the following 

\begin{lemma} \label{injectiv}
Assume that the category $\D$ is finite. Assume that the restriction of scalars of
$X\in \D_K$ is injective as an object of category ${\rm Ind}(\D)$. Then $X$ is injective as an object of $\D_K$.
\end{lemma}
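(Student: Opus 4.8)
The plan is to exploit the adjunction between extension and restriction of scalars together with the standard criterion that an object is injective iff $\Ext^1$ out of it vanishes. First I would recall the setup: write $\D \simeq C\text{-comod}$ for a finite-dimensional coalgebra $C$ (finiteness of $\D$), so that $\D_K \simeq C_K\text{-comod}$ with $C_K = K\otimes_\k C$, and the extension of scalars functor $e\colon \operatorname{Ind}(\D)\to \operatorname{Ind}(\D)_K$, $V\mapsto K\otimes V$, has as right adjoint the restriction of scalars $r$ that forgets the $K$-action. Since $e$ is exact, $r$ preserves injectives; but here we need the converse-flavored statement, so the key observation is that $r$ is also \emph{exact} and, crucially, \emph{faithful}, and that for $X\in \D_K$ there is a natural surjection (a split epimorphism of $C_K$-comodules) $e(r(X)) = K\otimes_\k X \twoheadrightarrow X$ coming from the $K$-module structure map $K\otimes_\k X\to X$. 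This exhibits $X$ as a direct summand of $e(r(X))$ in $\D_K$ (equivalently in $\operatorname{Ind}(\D)_K$, but $X$ has finite length).

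Now the main step: suppose $r(X)$ is injective in $\operatorname{Ind}(\D)$. I would show $e(r(X))$ is injective in $\operatorname{Ind}(\D)_K$. For any $W\in \operatorname{Ind}(\D)_K$ one has, by the adjunction and exactness of $r$,
$$
\Ext^i_{\operatorname{Ind}(\D)_K}(W,\, e(r(X))) \;\cong\; \Ext^i_{\operatorname{Ind}(\D)}(r(W),\, r(X)),
$$
and the right side vanishes for $i=1$ because $r(X)$ is injective in $\operatorname{Ind}(\D)$. (To be careful, one should check the adjunction $\Hom(W,e(Z))\cong \Hom(r(W),Z)$ for $Z$ in the target: since $r$ here is forgetting a $K$-action and $e$ is $K\otimes_\k-$, this is the usual extension/restriction of scalars adjunction, and the derived version follows from $e$ sending injectives to injectives — indeed $e$ of an injective $C$-comodule is an injective $C_K$-comodule since $K\otimes_\k-$ is exact and sends $C\otimes(-)$ to $C_K\otimes(-)$. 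Alternatively, since $K/\k$ is a field extension hence faithfully flat, $\Ext$ simply commutes with $K\otimes_\k-$.) Hence $e(r(X))$ is injective. Finally, $X$ is a direct summand of $e(r(X))$ by the split surjection above, and a direct summand of an injective object is injective; since $X$ has finite length it lies in $\D_K$, and being injective in $\operatorname{Ind}(\D)_K \simeq C_K\text{-Comod}$ it is in particular injective in the full subcategory $\D_K$ of finite-length objects (injectivity of a finite-length object against all objects implies injectivity against finite-length ones, and conversely $\D_K$ has enough injectives coming from $\operatorname{Ind}(\D)_K$, so the two notions agree for finite $\D_K$). This completes the argument.

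The main obstacle I anticipate is the bookkeeping around where "injective" is tested — in $\operatorname{Ind}(\D)$, in $\operatorname{Ind}(\D)_K$, or in $\D_K$ — and making sure the split epimorphism $K\otimes_\k X \to X$ and the adjunction isomorphism on $\Ext^1$ are genuinely $C_K$-comodule statements rather than mere $K$-vector-space statements. The cleanest route is probably to do everything at the level of comodules over $C$ and $C_K = K\otimes_\k C$: there restriction of scalars is literally "regard a $C_K$-comodule as a $C$-comodule," extension of scalars is $K\otimes_\k-$, the counit $K\otimes_\k X\to X$ is manifestly a $C_K$-comodule map split by $x\mapsto 1\otimes x$, and $\Ext$ over a coalgebra commutes with the flat base change $\k\to K$. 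Once phrased this way the proof is short; the only thing requiring a line of justification is that for the \emph{finite} category $\D_K$, injectivity as an object of $\D_K$ and as an object of $\operatorname{Ind}(\D)_K$ coincide, which is standard (e.g.\ via the injective envelope in $\operatorname{Ind}(\D)_K$ of a finite-length object being finite-length when $C_K$ is cofinite-dimensional over each simple, guaranteed by finiteness of $\D$).
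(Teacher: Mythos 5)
The central gap is your claimed splitting of the counit $K\otimes_\k X\to X$ in ${\rm Ind}(\D)_K$. The map $x\mapsto 1\otimes x$ is $\k$-linear and $C$-colinear, but it is \emph{not} $K$-linear ($\lambda x\mapsto 1\otimes \lambda x$, whereas $\lambda\cdot(1\otimes x)=\lambda\otimes x$, and these differ since the tensor product is over $\k$), so it is not a morphism in ${\rm Ind}(\D)_K$ at all. Worse, a $C_K$-colinear splitting need not exist for a general $X\in\D_K$: take $\D=\Rep(\k[x,y]/(x,y)^2)$, $K=\k(t)$, and $X=Ke_1\oplus Ke_2$ with $xe_1=e_2$, $ye_1=te_2$, $xe_2=ye_2=0$; an equivariant splitting would produce $u\in K\otimes_\k K$ with $(t\otimes 1-1\otimes t)u=0$ and $\mu(u)=1$, which is impossible because $K\otimes_\k K$ is an integral domain. (Splitting of this counit for all $X$ is exactly separability of $K$ over $\k$ as an algebra, which fails for every nontrivial extension of an algebraically closed field.) Whether a splitting exists for the particular $X$ with injective restriction is essentially as hard as the lemma itself and is not addressed. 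A second error: the adjunction you invoke, $\Hom(W,e(Z))\cong\Hom(r(W),Z)$, is backwards. Extension of scalars $e=K\otimes_\k(-)$ is \emph{left} adjoint to restriction $r$ (as the paper states), so the available isomorphism is $\Hom_{\D_K}(K\otimes M,N)\cong\Hom_{{\rm Ind}(\D)}(M,r(N))$; the right adjoint of $r$ is coinduction $\Hom_\k(K,-)$, which differs from $K\otimes_\k(-)$ whenever $K\neq\k$ (already for $C=\k$, $W=K$, $Z=\k$). Your fallback via flat base change does give $\Ext^1_{\D_K}(K\otimes M,K\otimes r(X))\cong K\otimes_\k\Ext^1_{{\rm Ind}(\D)}(M,r(X))=0$ for finite-length $M$, but without the splitting this says nothing about $X$.

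The repair is to drop the detour through $e(r(X))$ and use the correctly oriented adjunction with $X$ itself in the second slot, which is the paper's proof: since $e$ is exact, $r$ preserves injectives, so the derived adjunction gives $\Ext^1_{\D_K}(K\otimes\bar L,X)\cong\Ext^1_{{\rm Ind}(\D)}(\bar L,r(X))=0$ for every simple $\bar L\in\D$; because $\k$ is algebraically closed, every simple object of $\D_K$ is of the form $K\otimes\bar L$, and induction on length then yields $\Ext^1_{\D_K}(M,X)=0$ for all $M\in\D_K$, i.e.\ $X$ is injective in $\D_K$. Note that this uses in an essential way the fact, present in your setup but unused in your argument, that the simple objects of $\D_K$ are extended from $\D$.
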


\begin{proof} Let $L$ be a simple object of $\D_K$. Then there exists a simple object $\bar L\in \D$
such that $L=K\otimes \bar L$. Since the restriction of scalars functor is adjoint to the 
extension of scalars functor we have an isomorphism of derived functors
$\Ext^1_{\D_K}(K\otimes M,N)=\Ext^1_{{\rm Ind}(\D)}(M,N)$. Hence 
$$\Ext^1_{\D_K}(L,X)=\Ext^1_{\D_K}(K\otimes \bar L,X)=
\Ext^1_{{\rm Ind}(\D)}(\bar L,X)=0.$$
Using the long exact sequence we deduce that $\Ext^1_{\D_K}(M,X)=0$ for any object $M\in \D_K$;
hence $X$ is injective.
\end{proof}

The considerations above apply to the case when
the category $\D$ is a tensor category in the sense of \cite[Definition 4.1.1]{EGNO} (so $\D$ is required to be locally finite); in this case the categories $\D_K$ 
and ${\rm Ind}(\D)_K$ have obvious structures of monoidal categories and the extension of scalars functor has a structure of  a tensor functor. However the functor of tensor product is only right exact a priori and it is clear that the category ${\rm Ind}(\D)_K$ is not rigid. 

\begin{lemma}\label{extsca}
The category $\D_K$ is a tensor category.
\end{lemma}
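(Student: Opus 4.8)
The plan is to verify the axioms of a tensor category (\cite{EGNO}, Definition~4.1.1) for $\D_K$; the only substantial points are that $\ot$ preserves $\D_K$, that it is biexact (a priori only right exact), and that $\D_K$ is rigid. The category $\D_K=C_K\text{-comod}$ is visibly $K$-linear, abelian and locally finite, and since $\Hom$-spaces of finite-dimensional comodules are kernels of maps of finite-dimensional $\k$-vector spaces they commute with $K\ot_\k(-)$; thus $\Hom_{\D_K}(K\ot M,K\ot N)=K\ot_\k\Hom_\D(M,N)$, so in particular $\mathbf 1_{\D_K}=K\ot\mathbf 1_\D\in\D_K$ and $\End_{\D_K}(\mathbf 1_{\D_K})=K\ot_\k\End_\D(\mathbf 1_\D)=K$. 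The key preliminary I would establish is a generation statement: \emph{every $M\in\D_K$ is a quotient of, and dually a subobject of, $K\ot X$ for some $X\in\D$.} For this, the coefficient coalgebra of the finite-dimensional comodule $M$ is finite-dimensional, hence lies in $K\ot_\k C_0$ for some finite-dimensional subcoalgebra $C_0\subset C$; so $M$ is a finite-dimensional module over the finite-dimensional $K$-algebra $C_0^*\ot_\k K$, hence a quotient of a finite free module $(C_0^*)^{\oplus n}\ot_\k K\cong K\ot X$ with $X=(C_0^*)^{\oplus n}$ regarded as an object of $C_0\text{-comod}\subset\D$; applying this to the $K$-linear dual of $M$, a module over the opposite algebra, gives the subobject statement. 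Granting this, if $K\ot X\twoheadrightarrow M$ and $K\ot Y\twoheadrightarrow N$, then right exactness of $\ot$ in each variable yields an epimorphism $K\ot(X\ot Y)\twoheadrightarrow M\ot N$, and $K\ot(X\ot Y)\in\D_K$ has finite length, so $M\ot N\in\D_K$; hence $\ot$ restricts to $\D_K$, which is thus a $K$-linear locally finite abelian monoidal category with bilinear, right-exact tensor product and $\End(\mathbf 1)=K$.

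Next I would prove that $\D_K$ is rigid; biexactness of $\ot$ is then automatic, since for a rigid object $Z$ the functor $Z\ot(-)$ has both a left and a right adjoint (given by the two duals of $Z$) and hence is exact (\cite{EGNO}, Proposition~4.2.1). The extension-of-scalars functor $E\colon\D\to\D_K$, $X\mapsto K\ot X$, is strong monoidal, and strong monoidal functors carry dual pairs to dual pairs; so every $K\ot X$ is rigid in $\D_K$, with duals $K\ot{}^*X$, $K\ot X^*$ and evaluation/coevaluation inherited from $\D$. To extend rigidity to all of $\D_K$ one descends the duality structure of $\D$ along the scalar extension: the left- and right-dual functors of $\D$, being exact $\k$-linear contravariant equivalences, base-change to contravariant equivalences of $\D_K$ compatible with $E$; the evaluation and coevaluation, which are natural transformations assembled from the monoidal and duality data of $\D$, base-change as well; and the resulting triangle identities, holding on the generating family $\{K\ot X\}$ by base change from $\D$, therefore hold for every $M\in\D_K$ (write $M$ as a quotient of some $K\ot X$ and use naturality together with the fact that the quotient map is an epimorphism). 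Concretely, realizing $\D$ as $C\text{-comod}$, the rigidity of $\D$ equips $C$ with an anti-automorphism $S$ of coalgebras and with evaluation/coevaluation data compatible with the (generally non-standard) structure on $C$ encoding $\ot$ on $\D$, and all of this base-changes to $C_K=K\ot_\k C$; one may also invoke Deligne's treatment of base change for tensor categories (\cite{De2}). In either case $\D_K$ is rigid, hence a tensor category.

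The step I expect to be the main obstacle is precisely the passage from ``each $K\ot X$ is rigid'' to ``$\D_K$ is rigid'' (equivalently, from right exactness to biexactness of $\ot$): a naive diagram chase does not work, because in an arbitrary right-exact monoidal category a subquotient of a rigid object need not be rigid — for instance, the bimodule category of a non-separable Frobenius $\k$-algebra with trivial center is generated under subquotients by rigid objects but is not itself rigid. What makes the argument succeed here is that $\D_K$ is not merely generated by rigid objects: it is a \emph{scalar extension} of $\D$, so the full duality structure descends from $\D$ through $K\ot_\k(-)$ together with the generation statement above.
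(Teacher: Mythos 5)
Your argument is correct in outline, but it takes a genuinely different route from the paper's proof of Lemma \ref{extsca}; in fact it is essentially the alternative proof sketched in Remark \ref{anoproo}. The paper's own proof is much shorter and avoids constructing any duality on $\D_K$ at all: since $\k$ is algebraically closed, every simple object of $\D_K$ lies in the image of the extension of scalars, hence is rigid (extension of scalars is monoidal); by \cite[Proposition A2]{KL4} or \cite[Theorem A.2.5 (a)]{HPS} rigid objects are closed under extensions; induction on length then gives rigidity of all of $\D_K$, and biexactness of $\ot$ follows from rigidity as you note. What your route buys is independence from the algebraic closedness of $\k$ (you never use that the simples of $\D_K$ come from $\D$, which is the one input the paper's proof really needs); what the paper's route buys is that all the hard work is outsourced to the cited extension-closedness result.

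The step you should not leave as an assertion is exactly ``the left- and right-dual functors of $\D$ base-change to contravariant equivalences of $\D_K$.'' For covariant $\k$-linear functors base change is routine (pass to $\Ind(\D)$ and then to objects with $K$-action), but the $\Ind$-extension of a contravariant functor lands in pro-objects, and objects of $\D_K$ are typically not in $\D$ (they are infinite-dimensional over $\k$), so nothing formal produces the asserted functor together with its ev/coev data. Your concrete fix --- that the duality of $\D$ is encoded by $\k$-linear data on $C$ which can then be tensored with $K$ --- is the right idea, but for a general $\D$ with no fiber functor the tensor product of $C$-comodules is not governed by a coalgebra map $C\to C\ot C$, so the relevant datum is not simply ``an anti-automorphism $S$ of coalgebras''; one needs the pseudobialgebra formalism of Proposition \ref{delprod} augmented by an antipode-like structure, i.e., precisely the (co)pseudo-Hopf structure invoked in Remark \ref{anoproo} (alternatively one can appeal to Deligne's treatment of scalar extension in \cite{De2}). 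Once that structure is in place, the duals, evaluations, coevaluations and triangle identities base-change on the nose, so your dinaturality argument deducing the triangle identities for quotients of the objects $K\ot X$, while valid, becomes unnecessary; likewise your generation statement, though correct, is not needed in either proof.
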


\begin{proof}
We need to show that the category $\D_K$ is rigid. 
Note that the simple objects of $\D_K$ are rigid since they are in the image of the
extension of scalars.

The results of \cite[Proposition A2]{KL4} or \cite[Theorem A.2.5 (a)]{HPS} imply that 
the rigid objects in $\D_K$ are closed under extensions. Thus we  
deduce by induction in the length that all the objects of $\D_K$ are rigid.
\end{proof}

Another proof of Lemma \ref{extsca} is given in Remark \ref{anoproo}. 

\subsection{The Verlinde category ${\rm Ver}_p$.}
An important role in this paper is played by the {\it Verlinde category} ${\rm Ver}_p={\rm Ver}_p(\k)$ introduced by Gelfand-Kazhdan and Georgiev-Mathieu. So let us recall the basics about this category (see \cite{O,EOV} and references therein). 

The {\it Verlinde category} ${\rm Ver}_{p}$ is a symmetric fusion category over $\kk$
obtained as the quotient of $\Rep_{\kk}({\Bbb Z}/p)$ by the tensor ideal of 
negligible morphisms, i.e. morphisms $f: X\to Y$ such that for any $g: Y\to X$ one has ${\rm Tr}(fg)=0$. 
This category has $p-1$ simple objects, $\be = L_{1}, \ldots, L_{p-1}$, such that
$$
L_{r} \otimes L_{s} \cong \bigoplus_{i=1}^{\min(r, s, p-r, p-s)} L_{|r -s| + 2i - 1}
$$
(the Verlinde fusion rules). We define ${\rm Ver}_{p}^{+}$ to be the abelian subcategory of ${\rm Ver}_p$ generated by $L_{i}$ for $i$ odd, 
and define ${\rm Ver}_{p}^{-}$ to be the abelian subcategory of ${\rm Ver}_p$ generated by $L_{i}$ for $i$ even. By the Verlinde fusion rules, ${\rm Ver}_{p}^{+}$ is a fusion subcategory of ${\rm Ver}_{p}$, 
and tensoring with $\chi:=L_{p-1}$ gives an equivalence of abelian categories ${\rm Ver}_{p}^{+} \rightarrow {\rm Ver}_{p}^{-}$ as long as $p > 2$. Since for $p>2$ the symmetric fusion subcategory generated by $L_{1}$ and $L_{p-1}$ is ${\rm sVec}_\k$, we see that 
$$
{\rm Ver}_{p} = {\rm Ver}_{p}^{+} \oplus {\rm Ver}_{p}^{-} \cong {\rm Ver}_{p}^{+} \boxtimes {\rm sVec}_\k,\ p>2.
$$

The following lemma is standard. 

\begin{lemma} \label{char:Ver} (see e.g. \cite{EOV}) The Frobenius-Perron dimension of $L_r$ is given by the formula
$$
{\rm FPdim}(L_{r}) = \frac{\sin\frac{\pi r}{p}}{\sin\frac{\pi}{p}}.
$$
\end{lemma}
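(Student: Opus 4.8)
The plan is to use the characterization of the Frobenius--Perron dimension as the unique ring homomorphism $\phi\colon \mathrm{Gr}({\rm Ver}_p)\to\mathbb{R}$ taking positive values on all the simple objects $L_1,\dots,L_{p-1}$ (see \cite{EGNO}, Section 3.3). So it is enough to show that the assignment $L_r\mapsto \sin(\pi r/p)/\sin(\pi/p)$ defines such a homomorphism.

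First I would record that $\mathrm{Gr}({\rm Ver}_p)$ is generated as a ring by $[L_2]$: the Verlinde fusion rules give $L_2\otimes L_1\cong L_2$, $L_2\otimes L_r\cong L_{r-1}\oplus L_{r+1}$ for $1<r<p-1$, and $L_2\otimes L_{p-1}\cong L_{p-2}$, so an induction on $r$ yields $[L_r]=P_{r-1}([L_2])$, where $P_0=1$, $P_1=x$ and $xP_n=P_{n-1}+P_{n+1}$. These are the rescaled Chebyshev polynomials of the second kind, $P_n(x)=U_n(x/2)$, so that $P_n(2\cos\theta)=\sin((n+1)\theta)/\sin\theta$. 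Consequently a homomorphism $\phi$ as above is completely determined by $t:=\phi([L_2])$, and, since $t$ must be an eigenvalue of the fusion matrix $N_{[L_2]}$, which by the displayed fusion rules is exactly the adjacency matrix of the path graph on $p-1$ vertices, we get $t=2\cos(k\pi/p)$ for some $1\le k\le p-1$. The Frobenius--Perron value is the Perron (largest) eigenvalue, $t=2\cos(\pi/p)$, and then $\mathrm{FPdim}(L_r)=P_{r-1}(2\cos(\pi/p))=\sin(\pi r/p)/\sin(\pi/p)$, which is positive for $r=1,\dots,p-1$, as required.

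The substance of the argument is thus reduced to three routine points: the identification of $N_{[L_2]}$ with the path-graph adjacency matrix (immediate from the fusion rules), the verification $P_n(x)=U_n(x/2)$ (a one-line check of the recursion and initial data), and the standard value $2\cos(\pi/p)$ for the top eigenvalue of that graph. As an alternative that I would actually prefer to write out, one can avoid eigenvalues altogether: directly check by a product-to-sum identity that $r\mapsto\sin(\pi r/p)/\sin(\pi/p)$ respects the full Verlinde fusion rules $L_r\otimes L_s\cong\bigoplus_i L_{|r-s|+2i-1}$, and then invoke uniqueness of the positive character. Either way I do not anticipate a genuine obstacle --- the lemma is standard, and the content is elementary trigonometry together with the defining property of $\mathrm{FPdim}$.
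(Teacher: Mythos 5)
The paper itself gives no proof of this lemma: it is stated as standard with a pointer to \cite{EOV}, so there is no in-paper argument to compare against. Your proposal is a correct, self-contained derivation of the standard fact. The key points all check out: the fusion rules do give $L_2\otimes L_1\cong L_2$, $L_2\otimes L_r\cong L_{r-1}\oplus L_{r+1}$ for $1<r<p-1$, and $L_2\otimes L_{p-1}\cong L_{p-2}$, so $N_{[L_2]}$ is the adjacency matrix of the path on $p-1$ vertices; applying any character $\phi$ to these relations shows $(\phi([L_r]))_r$ is an eigenvector of $N_{[L_2]}$ with eigenvalue $\phi([L_2])$, and by the definition of $\mathrm{FPdim}$ (maximal eigenvalue) together with the uniqueness of the positive character of a transitive unital $\mathbb{Z}_+$-ring (\cite{EGNO}, Prop. 3.3.6) you land on $2\cos(\pi/p)$ and the Chebyshev evaluation $\mathrm{FPdim}(L_r)=\sin(\pi r/p)/\sin(\pi/p)$. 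Your alternative route also works: with $a=\pi/p$ the telescoping identity $2\sin(a)\sum_{i=1}^{m}\sin\bigl(a(|r-s|+2i-1)\bigr)=\cos(a|r-s|)-\cos\bigl(a(|r-s|+2m)\bigr)$ equals $2\sin(ar)\sin(as)$ both when $m=\min(r,s)$ and when $m=\min(p-r,p-s)$ (since $\cos(2\pi-x)=\cos x$), so the proposed values respect the full Verlinde fusion rules and uniqueness of the positive character finishes the argument. Either write-up would serve as a legitimate replacement for the external citation.
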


\section{Additive functors on the category of $\k[D]/D^n$-modules}

\subsection{The functors $B_i$ and $E_i$}
Let $n$ be a positive integer and $\A$ an artinian category over a field $\k$ (\cite{EGNO}, 1.8). Let 
$$
\A_n:=\A\boxtimes \Rep (\k[D]/D^n)
$$ 
be the Deligne tensor product (\cite{EGNO}, 1.11), i.e., the category of objects $X$ of $\A$ equipped with an endomorphism $D: X\to X$ such that $D^n=0$. We define the following additive functors $\A_n\to \Vec_\k$: 
$$
B_i(X):=\frac{{\rm Ker} D\cap {\rm Im} D^{i-1}}{{\rm Ker} D\cap {\rm Im} D^i},\ i=1,\dots,n-1
$$
and
$$
E_i(X):={\rm Ker} D^i/{\rm Im} D^{n-i},\ i=1,\dots,n-1.
$$

Also define 
$$
L_{i,j}(X):=\frac{{\rm Ker}D\cap {\rm Im}D^i}{{\rm Ker}D\cap {\rm Im}D^j}, 0\le i<j\le n-1;
$$
It is clear that $L_{i,j}(X)$ has a natural filtration with successive quotients
$B_j(X),\dots,B_{i+1}(X)$ (from bottom to top). 

Finally, for $1\le s\le i\le n-1$ let 
$$
E_{i,s}(X):=\frac{{\rm Ker}D^s\cap {\rm Im}D^{i-s}}{{\rm Im}D^{n-s}}.
$$
It is clear that $E_{i,i}(X)=E_i(X)$ and $E_{i,0}(X)=0$. 

\begin{lemma}\label{le1} (i) For $1\le s\le i$, there is a natural short exact sequence
$$
0\to L_{i-s,n-s}(X)\to E_{i,s}(X)\to E_{i,s-1}(X)\to 0.
$$

(ii) $E_i(X)$ has a natural filtration with successive quotients 
$$
B_{n-i},\dots, B_1; B_{n-i+1},\dots ,B_2;\dots ;B_{n-1},\dots,B_i.
$$ 
In particular, $B_j$ occurs in this filtration with multiplicity
given by the $j$-th entry $v_{ij}$ 
of the vector $\bold v_i:=(1,2,\dots,i,i,\dots,i,\dots,2,1)\in \Bbb Z^{n-1}$ if $i\le n/2$, and 
for $E_{n-i}(X)$ the multiplicities are the same. 
\end{lemma}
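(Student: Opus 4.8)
The plan is to prove both parts by a careful bookkeeping of the images and kernels of powers of $D$, exploiting the fact that over $\k[D]/D^n$ every object $X$ (in an artinian category) decomposes, after passing to a composition series, into a direct sum of "Jordan blocks" $J_m := \k[D]/D^m$ for $1 \le m \le n$; all the functors $B_i$, $E_i$, $L_{i,j}$, $E_{i,s}$ are additive and can therefore be computed block by block. First I would record the elementary computation on a single block $J_m$: one has ${\rm Ker}\,D \cap {\rm Im}\,D^{i-1} = {\rm Im}\,D^{m-1}$ if $i \le m$ and $0$ otherwise, so $B_i(J_m) = \k$ exactly when $i = m$ and $0$ otherwise; similarly ${\rm Ker}\,D^i = {\rm Im}\,D^{\max(m-i,0)}$ has dimension $\min(i,m)$, and ${\rm Im}\,D^{n-i}$ inside $J_m$ has dimension $\max(m-n+i,0)$, so $\dim E_i(J_m) = \min(i,m) - \max(m-n+i,0)$, which after a short case analysis equals $\min(i, m, n-i, n-m)$ — in particular it is symmetric under $i \mapsto n-i$, which gives the last sentence of (ii). An analogous one-block computation handles $E_{i,s}$.

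For part (i), the short exact sequence $0 \to L_{i-s,n-s}(X) \to E_{i,s}(X) \to E_{i,s-1}(X) \to 0$ should be produced \emph{functorially} rather than block by block, so that naturality is manifest. The idea is that $E_{i,s}(X) = ({\rm Ker}\,D^s \cap {\rm Im}\,D^{i-s})/{\rm Im}\,D^{n-s}$ maps onto $E_{i,s-1}(X) = ({\rm Ker}\,D^{s-1} \cap {\rm Im}\,D^{i-s+1})/{\rm Im}\,D^{n-s+1}$ via the composite of the inclusion ${\rm Ker}\,D^s \cap {\rm Im}\,D^{i-s} \hookleftarrow {\rm Ker}\,D^{s-1} \cap {\rm Im}\,D^{i-s+1}$ — wait, that inclusion goes the wrong way, so instead one uses $D$: multiplication by $D$ sends ${\rm Ker}\,D^{s} \cap {\rm Im}\,D^{i-s}$ into ${\rm Ker}\,D^{s-1} \cap {\rm Im}\,D^{i-s+1}$, and sends ${\rm Im}\,D^{n-s}$ onto ${\rm Im}\,D^{n-s+1}$, hence descends to a surjection $E_{i,s}(X) \twoheadrightarrow E_{i,s-1}(X)$. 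Its kernel is the image in $E_{i,s}(X)$ of $({\rm Ker}\,D \cap {\rm Ker}\,D^s \cap {\rm Im}\,D^{i-s}) + ({\rm preimage of } {\rm Im}\,D^{n-s+1})$, and a diagram chase identifies this with $({\rm Ker}\,D \cap {\rm Im}\,D^{i-s})/({\rm Ker}\,D \cap {\rm Im}\,D^{n-s}) = L_{i-s,n-s}(X)$; the containment $i-s < n-s$ needed for $L_{i-s,n-s}$ to be defined follows from $i \le n-1$. I would double-check the edge case $s=i$, where $L_{0,n-i}(X) \to E_i(X) \to E_{i,i-1}(X) \to 0$ should be consistent with the filtration in (ii), and $s=1$, where $E_{i,0}(X)=0$ forces $E_{i,1}(X) \cong L_{i-1,n-1}(X) = B_{n-1}(X)$, matching the bottom of the claimed filtration.

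Part (ii) then follows by assembling the sequences from (i): iterating $0 \to L_{i-s,n-s}(X) \to E_{i,s}(X) \to E_{i,s-1}(X) \to 0$ for $s = i, i-1, \dots, 1$ exhibits $E_i(X) = E_{i,i}(X)$ with a filtration whose successive quotients (from bottom to top, say) are $L_{i-1,n-1}(X), L_{i-2,n-2}(X), \dots, L_{0,n-i}(X)$. Since $L_{a,b}(X)$ itself carries the stated filtration with quotients $B_b, B_{b-1}, \dots, B_{a+1}$, refining each $L_{i-s,n-s}$ gives a filtration of $E_i(X)$ whose graded pieces are, block by starting block, $B_{n-1},\dots,B_i$ (from $L_{i-1,n-1}$), then $B_{n-2},\dots,B_{i-1}$ (from $L_{i-2,n-2}$), \dots, then $B_{n-i},\dots,B_1$ (from $L_{0,n-i}$) — which is exactly the list in the statement, read in the appropriate order. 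Collecting multiplicities, $B_j$ appears once for each $s \in \{1,\dots,i\}$ with $i-s < j \le n-s$, i.e. for $s$ in the range $\max(1, n-j+1-? )$ — counting these $s$ gives precisely the entry $v_{ij}$ of the "trapezoidal" vector $(1,2,\dots,i,i,\dots,i,\dots,2,1)$ when $i \le n/2$, and the symmetry $E_i \leftrightarrow E_{n-i}$ established in the one-block computation gives the final assertion. The main obstacle I anticipate is not any single step but keeping the numerous index ranges and the direction of the filtrations consistent across (i) and (ii); the cleanest safeguard is to verify everything first on the blocks $J_m$, where every term is either $\k$ or $0$, and only then promote the identifications to the functorial statements, using additivity to know that a natural transformation which is an isomorphism on every $J_m$ is an isomorphism.
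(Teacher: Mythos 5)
Your proposal is correct and takes essentially the same route as the paper: part (i) is exactly the observation that $D$ induces a surjection $E_{i,s}\to E_{i,s-1}$ with kernel $L_{i-s,n-s}$, and part (ii) is obtained by iterating these sequences and refining by the filtration of $L_{i,j}$ with quotients $B_j,\dots,B_{i+1}$. Two small points: the bottom-to-top order of the $L$-quotients of $E_i$ is $L_{0,n-i},L_{1,n-i+1},\dots,L_{i-1,n-1}$ (the reverse of what you wrote, which is what makes the $B$-list come out exactly as stated), and a general object of $\A_n$ need not decompose into Jordan blocks when $\A\neq \Vec_\k$ — but your functorial construction of the sequences does not rely on that, and the block computation is only needed over $\Vec_\k$ (where it is valid) to identify the multiplicities $v_{ij}$ and the symmetry between $E_i$ and $E_{n-i}$.
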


\begin{proof} (i) It is easy to see that the operator $D$ defines a surjection $E_{i,s}\to E_{i,s-1}$, whose kernel is 
$L_{i-s,n-s}$. 

(ii) follows from (i) and the composition series for $L_{i,j}$. 
\end{proof} 

\begin{example} Let $n=2$. Then we have just one functor $E_1(X)=B_1(X)={\rm Ker}D/{\rm Im}D$, the cohomology of $D$. 
\end{example}

\begin{proposition}\label{spli} Let $0\to X\to Y\to Z\to 0$ 
be a short exact sequence of finite dimensional $\k[D]/D^n$-modules. 
Suppose that for each $i=1,...,[n/2]$, the sequence 
$0\to E_i(X)\to E_i(Y)\to E_i(Z)\to 0$ 
is exact\footnote{Here and below, $[x]$ denotes the integer part (floor) of a number $x$}. Then the sequence $0\to X\to Y\to Z\to 0$ is split. 
\end{proposition}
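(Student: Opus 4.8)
The plan is to decompose any finite-dimensional $\k[D]/D^n$-module into a direct sum of Jordan blocks $J_m := \k[D]/D^m$ for $m = 1,\dots,n$, and to count the multiplicities of these blocks using the functors $E_i$. The key observation is that the integer-valued invariants $\dim E_i(X)$, $i = 1,\dots,[n/2]$, together with $\dim X$, determine all the Jordan-block multiplicities of $X$. Indeed, for a single block $J_m$ one computes directly that $E_i(J_m) = {\rm Ker}\,D^i/{\rm Im}\,D^{n-i}$ has dimension $\max(0,\min(i,m) - \max(0,m-(n-i))) = \min(i, m, n-m, n-i)$; since $E_i$ is additive, $\dim E_i(X) = \sum_m a_m(X)\,\min(i,m,n-m,n-i)$, where $a_m(X)$ is the multiplicity of $J_m$ in $X$. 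Together with $\dim X = \sum_m m\, a_m(X)$, these linear relations (as $i$ ranges over $1,\dots,[n/2]$) form an invertible system for the $a_m(X)$ — this is the linear-algebra core of the argument and the step I expect to require the most care, though it is entirely elementary.

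Granting that, the argument finishes quickly. Given the short exact sequence $0\to X\to Y\to Z\to 0$ with all the sequences $0\to E_i(X)\to E_i(Y)\to E_i(Z)\to 0$ exact, additivity of Euler characteristics gives $\dim E_i(Y) = \dim E_i(X) + \dim E_i(Z)$ for all $i$, and of course $\dim Y = \dim X + \dim Z$. By the previous paragraph this forces $a_m(Y) = a_m(X) + a_m(Z)$ for every $m$, i.e. $Y \cong X \oplus Z$ \emph{as $\k[D]/D^n$-modules}. It remains to upgrade this abstract isomorphism to the statement that the given sequence splits: I would argue that for $\k[D]/D^n$-modules, any surjection $Y \twoheadrightarrow Z$ with $Y \cong X \oplus Z$ and $\ker$ isomorphic to $X$ is automatically split. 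This holds because $\k[D]/D^n$ is a self-injective (in fact Frobenius) algebra, so $Z$ being a summand of $Y$ is equivalent to the sequence being split; alternatively, compare the two filtrations by $\ker D^j$ and match Jordan blocks of $Y$ with those of the submodule $X$ directly. Either route is routine.

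I would present it in exactly this order: (1) compute $\dim E_i(J_m)$ and note additivity gives the multiplicity-counting formulas; (2) verify the relevant linear system in the unknowns $a_m$ is invertible, so the multiplicities are determined by $(\dim X, \dim E_1(X),\dots,\dim E_{[n/2]}(X))$; (3) apply this to $X$, $Y$, $Z$ using the hypothesis to get $Y \cong X\oplus Z$; (4) deduce splitness of the original sequence from self-injectivity of $\k[D]/D^n$. The main obstacle is step (2) — making the invertibility claim precise and clean; the vector $\bold v_i$ of Lemma \ref{le1}(ii) is essentially recording these same multiplicities, so I would try to read off the needed nondegeneracy from the triangular shape of the matrix $(\min(i,m,n-m,n-i))_{i,m}$ rather than inverting it by hand.
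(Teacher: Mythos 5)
The central step of your plan --- that $\dim X$ together with $\dim E_i(X)$ for $i=1,\dots,[n/2]$ determines all Jordan block multiplicities $a_m(X)$ --- is false for $n\ge 3$, and this is a fatal gap rather than a detail needing care. You have $n$ unknowns $a_1,\dots,a_n$ but only $[n/2]+1$ linear functionals, and your own formula $\dim E_i(J_m)=\min(i,m,n-m,n-i)$ shows why no choice of coefficients can help: each $\dim E_i$ is invariant under $m\mapsto n-m$, so the $E_i$'s can never separate blocks of size $m$ from blocks of size $n-m$, and the total dimension does not repair this. Concretely, for $n=4$ the modules $J_1^{\oplus 2}\oplus J_4$ and $J_3^{\oplus 2}$ have the same dimension ($6$), the same $\dim E_1$ ($=2$) and the same $\dim E_2$ ($=2$), but are not isomorphic (similarly for $n=3$: $J_2^{\oplus 3}$ versus $J_1^{\oplus 3}\oplus J_3$). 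So from the additivity of $\dim$ and of the $\dim E_i$ alone you cannot conclude $a_m(Y)=a_m(X)+a_m(Z)$, i.e.\ you cannot conclude $Y\cong X\oplus Z$, and your argument never uses the exact sequence itself except at the very last step.

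This missing input is exactly what the paper's proof supplies. The $E_i$-additivity only yields that the quantities $\beta_i=$ (number of blocks of size $i$) $+$ (number of blocks of size $n-i$) are additive, equivalently that $\nu:=\lambda_X+\lambda_Z-\lambda_Y$, expanded in simple roots of $GL_n$, is antisymmetric under the flip $\nu_i\mapsto\nu_{n-i}$. The second, independent ingredient is that the mere existence of the short exact sequence forces the Littlewood--Richardson coefficient attached to $(\lambda_X,\lambda_Z;\lambda_Y)$ to be nonzero (a Hall-algebra fact), hence $\nu$ lies in the positive root cone; antisymmetry together with $\nu_i\ge 0$ then gives $\nu=0$, so $Y\cong X\oplus Z$ and the sequence splits. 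If you want to salvage your outline you must add an argument of this kind relating $\lambda_Y$ to $\lambda_X+\lambda_Z$ beyond numerics. A smaller point: your appeal to self-injectivity of $\k[D]/D^n$ does not by itself prove that an exact sequence with middle term abstractly isomorphic to $X\oplus Z$ splits; the routine argument is the count $\dim\Hom(Z,Y)=\dim\Hom(Z,X)+\dim\Hom(Z,Z)$, which shows $\Hom(Z,Y)\to\Hom(Z,Z)$ is surjective so that $\mathrm{id}_Z$ lifts. But that is secondary; the underdetermined linear system in your step (2) is the real gap.
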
 

\begin{proof} For $i=1,...,[n/2]$ set $\beta_i(X)$ to be the number of Jordan blocks of size $i$ 
plus the number of Jordan blocks of size $n-i$ in $X$. 
Then the assumption implies that $\beta_i(Y)=\beta_i(X)+\beta_i(Z)$ 
for all $i$. 

  Let $X,Y,Z$ correspond to partitions $\lambda_X$, $\lambda_Y$, $\lambda_Z$ 
encoding the Jordan form of $D$ i.e., the size of the $m$-th largest Jordan block in $X$ is 
given by the $m$-th part of the conjugate partition $\lambda_X^\dagger$, etc (this convention
is dual to the convention in \cite{M}, Chapter 2). Then the existence of the short exact sequence
$0\to X\to Y\to Z\to 0$ implies that the Littlewood-Richardson coefficient $c_{\lambda_X, \lambda_Y}^{\lambda_Z}\ne 0$, see \cite[4.1]{K} (for the reader's convenience we indicate here an argument based
on the basic theory of Hall algebras: for a finite base field the statement follows from \cite[II.4.3 (i)]{M}; in the case of an algebraically closed base field the result follows since the exact sequences of this type can be parameterized by the points of a quasi-affine algebraic variety). 
Considering the partitions $\lambda_X$, $\lambda_Y$, $\lambda_Z$ as dominant $GL_n$ weights
and using the interpretation of the Littlewood-Richardson coefficients as $GL_n$ tensor product 
multiplicities we get that the $GL_n$-weight $\nu:=\lambda_X+\lambda_Z-\lambda_Y\in Q_+$,
where $Q_+$ is the positive part of the root lattice. On the other hand the first
paragraph says that $\nu=\sum_{i=1}^{n-1}\nu_i\alpha_i$ is antisymmetric with respect 
to the flip $\nu_i\mapsto \nu_{n-i}$. Since $\nu_i\ge 0$ we get that $\nu=0$, which implies that the sequence  $0\to X\to Y\to Z\to 0$ splits. 
\end{proof} 

\subsection{The functors $G_i$ for a symmetric tensor category and the 6-periodic long exact sequence}\label{6per} Let $\C$ be a symmetric tensor category over a field $\k$ of characteristic $p$ and let $X\in \C$. 
Let $c=(1...p)$ be the cyclic permutation acting on $X^{\otimes p}$, and let $D=1-c$. 
Then $D^p=1-c^p=0$, so $X^{\otimes p}$ is naturally a $\k[D]/D^p$-module. Define the functors 
$G_i,F_i: \C\to \Vec_\k$ given by 
$$
G_i(X):=E_i(X^{\otimes p}),\ F_i(X):=B_i(X^{\otimes p}),\ i=1,...,p-1.
$$ 

\begin{lemma}\label{addg} The functors $G_i$ and $F_i$ for $i=1,...,p-1$ are additive. 
\end{lemma}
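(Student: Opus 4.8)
The plan is to reduce the additivity of $G_i = E_i(X^{\otimes p})$ and $F_i = B_i(X^{\otimes p})$ to a statement about how the $\k[D]/D^p$-module structure on $(X\oplus Y)^{\otimes p}$ decomposes. The key point is that, although $(X\oplus Y)^{\otimes p}$ is \emph{not} just $X^{\otimes p}\oplus Y^{\otimes p}$ as an object, the extra summands carry a free $\k[D]/D^p$-module structure, and the functors $B_i$ and $E_i$ vanish on free modules. Concretely, expanding the tensor power gives $(X\oplus Y)^{\otimes p}\cong \bigoplus_{S\subseteq\{1,\dots,p\}} X^{\otimes S}\otimes Y^{\otimes(\{1,\dots,p\}\setminus S)}$, and the cyclic permutation $c$ permutes these summands according to its action on subsets $S$. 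The fixed subsets under the cyclic group $\ZZ/p$ are exactly $S=\varnothing$ and $S=\{1,\dots,p\}$ (since $p$ is prime and $0<|S|<p$ gives a free orbit), whose corresponding summands are $Y^{\otimes p}$ and $X^{\otimes p}$ with their usual $c$-action. All other summands are grouped into free $\ZZ/p$-orbits of size $p$; on each such orbit the induced $\k[\ZZ/p]$-module (hence $\k[D]/D^p$-module, via $D=1-c$) is $\cong \k[\ZZ/p]\otimes (\text{the summand at one orbit representative})\cong (\k[D]/D^p)\otimes(\cdots)$, i.e.\ free over $\k[D]/D^p$.

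With this decomposition in hand, the proof has three steps. First, I would record the orbit decomposition above, so that as $\k[D]/D^p$-modules $(X\oplus Y)^{\otimes p}\cong X^{\otimes p}\oplus Y^{\otimes p}\oplus P$ with $P$ a free $\k[D]/D^p$-module (an object of $\C$ tensored with the regular representation). Second, I would check that $B_i$ and $E_i$ annihilate free modules: for a free module $M=(\k[D]/D^p)\otimes W$ one has $\Ker D\cap\Image D^{i-1} = \Image D^{p-1}W$ for all $i\le p-1$ so $B_i(M)=0$, and $\Ker D^i = \Image D^{p-i}W = \Image D^{p-i}$ so $E_i(M)=0$ — this is exactly the degenerate case where the single-Jordan-block length equals $n=p$, consistent with Lemma~\ref{le1}. (Additivity of $B_i,E_i$ as functors $\A_n\to\Vec_\k$ on honest direct sums is clear from the definitions, since kernels and images of block-diagonal operators decompose.) Third, combining these: $G_i(X\oplus Y)=E_i(X^{\otimes p}\oplus Y^{\otimes p}\oplus P)=E_i(X^{\otimes p})\oplus E_i(Y^{\otimes p})\oplus 0 = G_i(X)\oplus G_i(Y)$, and likewise for $F_i$ with $B_i$; functoriality on morphisms follows since a morphism $f\colon X\to X'$ induces $f^{\otimes p}$ commuting with $c$, hence with $D$, and the above identifications are natural in the obvious sense.

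The main obstacle is making the orbit decomposition genuinely canonical and compatible with the $\k[D]/D^p$-module structure — i.e.\ verifying that the chosen isomorphism $(X\oplus Y)^{\otimes p}\cong X^{\otimes p}\oplus Y^{\otimes p}\oplus P$ really does intertwine $c$ with the permutation of summands, using the symmetry (braiding) of $\C$ to move tensor factors past each other, and that the cyclic-orbit summands are honestly free and not merely filtered by free pieces. One must be slightly careful that the natural "reordering" isomorphisms built from the symmetric structure are the ones under which $c$ acts by permuting the index set; this is where functoriality of the symmetric braiding does the work. Everything else is the elementary linear algebra of Section~3 (the computation $B_i=E_i=0$ on free modules is a special case of Lemma~\ref{le1}(ii), where a size-$p$ Jordan block contributes $v_{i,j}$ copies of $B_j$ but the relevant graded pieces all vanish because there is only the single block of the maximal size $n=p$).
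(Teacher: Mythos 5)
Your proposal is correct and follows essentially the same route as the paper: decompose $(X\oplus Y)^{\otimes p}$ into $X^{\otimes p}\oplus Y^{\otimes p}$ plus a complementary summand which, by the orbit argument for the cyclic group of prime order $p$, is a free $\k[D]/D^p$-module, and then observe that $E_i$ and $B_i$ vanish on free modules for $i=1,\dots,p-1$. Your orbit-by-orbit justification of freeness just makes explicit what the paper states directly (that the cross terms $X^{\otimes i}\otimes Y^{\otimes p-i}\otimes M_i$ have $M_i$ free of dimension $\binom{p}{i}$), so the two arguments coincide in substance.
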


\begin{proof} We have $(X\oplus Y)^{\otimes p}=X^{\otimes p}\oplus Y^{\otimes p}\oplus W$, 
where all summands are $D$-invariant. Moreover, 
$$
W=\oplus_{i=1}^{p-1}X^{\otimes i}\otimes Y^{\otimes p-i}\otimes M_i,
$$ 
where $M_i$ is a free $\k[D]/D^p$-module of $\k$-dimension $\binom{p}{i}$. Thus on $W$ we have
${\rm Ker}D^i={\rm Im}D^{p-i}$, so $E_i(W)=0$ for $i=1,...,p-1$. Hence $B_i(W)=0$ for $i=1,...,p-1$ (as $B_i(W)$ is a composition factor of $E_i(W)$). This implies the statement.  
\end{proof} 

\begin{remark} One can also define the functor $F_p(X)={\rm Im}D^{p-1}$. However, unlike $F_i$ for $i<p$, this functor is not additive, so we will not consider it. 
\end{remark} 

\begin{proposition}\label{longex}
Let $0\to X\to Y\to Z\to 0$ be a short exact sequence in $\C$. Then we have a 6-periodic long exact sequence 
$$
.. \to G_i(X)\to G_i(Y)\to G_i(Z)\to G_{p-i}(X)\to G_{p-i}(Y)\to G_{p-i}(Z)\to ..
$$
In particular, the functor $X\mapsto G_i(X)$ is "exact in the middle" for all $i$. Therefore, the composition series of $G_i(Y)$ is dominated by the composition series of $G_i(X)\oplus G_i(Z)$ (as objects of $\C$). 
\end{proposition}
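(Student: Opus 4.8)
The plan is to deduce the long exact sequence from the general homological behavior of the functors $E_i$ applied to the $\k[D]/D^p$-module $X^{\otimes p}$, by analyzing how $X^{\otimes p}$, $Y^{\otimes p}$, $Z^{\otimes p}$ fit into a filtered object. The key observation is that, given a short exact sequence $0\to X\to Y\to Z\to 0$ in $\C$, the object $Y^{\otimes p}$ carries a natural $c$-stable (hence $D$-stable) filtration whose associated graded is $\bigoplus_{j=0}^{p}X^{\otimes j}\otimes Z^{\otimes(p-j)}\otimes(\text{a }\k[D]/D^p\text{-module})$, with the top piece $Z^{\otimes p}$, the bottom piece $X^{\otimes p}$, and all middle pieces $X^{\otimes j}\otimes Z^{\otimes(p-j)}\otimes M_j$ for $0<j<p$, where $M_j$ is free over $\k[D]/D^p$ of dimension $\binom{p}{j}$ (this is the same computation as in Lemma \ref{addg}, now in the filtered rather than split setting, since $Y$ is only an extension of $Z$ by $X$). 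By Lemma \ref{addg}'s argument, $E_i$ kills each free middle piece, so applying $E_i$ to this filtration collapses everything except the two extreme layers. One is thus reduced to understanding the connecting maps in the long exact sequence of $E_i$ associated to the two-step filtration of $Y^{\otimes p}$ with sub $X^{\otimes p}$ and quotient $Z^{\otimes p}$.

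The core linear-algebra input is then a general statement about the functors $E_i$ on $\k[D]/D^p$-modules: for a short exact sequence $0\to A\to B\to C\to 0$ of $\k[D]/D^p$-modules, there is a natural six-term (in fact $6$-periodic) exact sequence
$$
\cdots\to E_i(A)\to E_i(B)\to E_i(C)\to E_{p-i}(A)\to E_{p-i}(B)\to E_{p-i}(C)\to\cdots .
$$
The construction of the connecting map $E_i(C)\to E_{p-i}(A)$: an element of $E_i(C)=\Ker D^i(C)/\Image D^{p-i}(C)$ lifts to some $b\in B$ with $D^i b\in A$; since $D^{i}\cdot D^{p-i}b = D^p b = 0$, we have $D^{p-i}(D^i b)=0$, i.e.\ $D^i b\in\Ker D^{p-i}(A)$, and one checks this is well-defined modulo $\Image D^{i}(A)$, landing in $E_{p-i}(A)=\Ker D^{p-i}(A)/\Image D^{i}(A)$. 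Exactness at each of the six spots is then a diagram chase of exactly the same flavor as the snake lemma; the periodicity is built in because the roles of $i$ and $p-i$ swap and $D^i$ composed with $D^{p-i}$ is zero. I would verify this by a direct chase, or alternatively by observing that $E_i(-)$ is computed by the two-term complex $(\,\cdot\, \xrightarrow{D^{p-i}}\, \cdot\, \xrightarrow{D^{i}}\, \cdot\,)$ wrapped into a $\ZZ/2$-graded ($6$-periodic once one tracks the index) complex, and invoking the long exact cohomology sequence of a short exact sequence of such complexes.

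Once these two pieces are in place, the proposition follows: the filtration argument identifies $E_i(Y^{\otimes p})$ with the middle term of the $E_i$-sequence for $0\to X^{\otimes p}\to Y^{\otimes p}\to Z^{\otimes p}\to 0$ (the free middle layers contributing nothing), and the $6$-periodic exact sequence for that short exact sequence of $\k[D]/D^p$-modules is exactly the asserted sequence with $G_i=E_i((-)^{\otimes p})$. "Exactness in the middle" at $G_i(Y)$ is immediate from exactness of the long sequence at that spot, and the domination of composition series follows because an exact sequence $G_i(X)\to G_i(Y)\to G_i(Z)$ (coming from a longer exact sequence, so $G_i(Y)$ surjects onto a subobject of $G_i(Z)$ with kernel a quotient of $G_i(X)$, all within $\C$ since the constructions are functorial in $\C$) forces $[G_i(Y)]\le[G_i(X)]+[G_i(Z)]$ in the Grothendieck group. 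I expect the main obstacle to be the careful construction of the $c$-stable filtration on $Y^{\otimes p}$ and the verification that its graded pieces are as claimed — in particular checking $D$-freeness of the middle layers when $Y$ is a genuine (non-split) extension rather than a direct sum — together with confirming that all connecting maps are natural enough to be morphisms in $\C$ and not merely in $\Vec_\k$. The six-term exactness itself, being a standard snake-type chase, should be routine.
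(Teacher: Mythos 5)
Your proposal is correct and takes essentially the same route as the paper: the paper packages your general $6$-periodic sequence for $\k[D]/D^p$-modules as the long exact cohomology sequence of the $2$-periodic complexes $C_i^\bullet$ (your alternative construction of the connecting map), applied to $0\to C_i^\bullet(X)\to C_i^\bullet(Y)\to C_i^\bullet(Y)/C_i^\bullet(X)\to 0$. The failure of $0\to X^{\otimes p}\to Y^{\otimes p}\to Z^{\otimes p}\to 0$ to be exact is handled exactly as you propose, by using the filtration induced by $X\subset Y$ and the freeness of the middle graded pieces (the computation of Lemma \ref{addg}) to show that $Y^{\otimes p}/X^{\otimes p}\to Z^{\otimes p}$ is an isomorphism on the relevant cohomology, i.e.\ a quasi-isomorphism of the complexes $C_i^\bullet$.
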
 

\begin{proof} Let $C_i^\bullet(X)$ be the complex with $C_i^j(X)=X^{\otimes p}, j\in \Bbb Z$ and the differential from $C_i^{2k}(X)$ to $C_i^{2k+1}(X)$ given by $D^i$, while the differential from $C_i^{2k-1}(X)$ to $C_i^{2k}(X)$ given by $D^{p-i}$. 
Then we have a short exact sequence of complexes   
\begin{equation} \label{seqofcompl}
0\to C_i^\bullet(X)\to C_i^\bullet(Y)\to \widetilde C_i^\bullet\to 0
\end{equation}
with $\widetilde C_i^\bullet:=C_i^\bullet(Y)/C_i^\bullet(X)$. The natural map
$C_i^\bullet(Y)\to C_i^\bullet(Z)$ induces a surjective map $\widetilde C_i^\bullet\to C_i^\bullet(Z)$. 
We claim that the kernel of the map $\widetilde C_i^l=Y^{\otimes p}/X^{\otimes p}\to C_i^k(Z)=Z^{\otimes p}$ is a free $\k[D]/D^p$-module, so the map $\widetilde C_i^\bullet\to C_i^\bullet(Z)$ is a quasiisomorphism. Indeed the filtration $0\subset X\subset Y$ induces a filtration on the object
$Y^{\otimes p}$ compatible with the action of $D$ and it is sufficient to prove the freeness
for the associated graded. Thus we are reduced to the case $Y=X\oplus Z$ and the claim
follows from the proof of Lemma \ref{addg}.

Thus the cohomology of $\widetilde C_i^\bullet$ identifies with the cohomology of $C_i^\bullet(Z)$ and
the desired 6-periodic sequence is the long exact sequence of cohomology for the short exact sequence
\eqref{seqofcompl}.

\begin{comment}
(with $\widetilde C_i^\bullet:=C_i^\bullet(Y)/C_i^\bullet(X)$), where  we have a map $\widetilde C_i^\bullet\to C_i^\bullet(Z)$ (surjective in each degree) which is a quasiisomorphism. 
This gives the desired 6-periodic sequence as the corresponding long exact sequence of cohomology. 
\end{comment}
\end{proof} 

\begin{example} Let $p=2$. Then we have just one functor $G_1(X)=F_1(X)=F(X)$, which is the cohomology 
of $1-c$ on $X\otimes X$ (called the Frobenius functor below). Thus the $6$-periodic long exact sequence
in this case is $3$-periodic: 
$$
..\to F(X)\to F(Y)\to F(Z)\to..
$$
\end{example} 

\subsection{Subadditive functionals}
Let $\A$ be an artinian category and $\Gamma$ be the split Grothendieck group of $\A$. 
Let $f: \Gamma\to \Bbb R$ be a group homomorphism. 

\begin{definition} We say that $f$ is subadditive if for any short exact sequence 
$0\to X\to Y\to Z\to 0$ we have $f(Y)\le f(X)+f(Z)$. 
\end{definition}

Now let $\A=\C$ be a tensor category with finitely many simple objects. 
For $X\in \C$ let $h_i(X):={\rm FPdim}(G_i(X))$. By Lemma \ref{le1}(ii), 
$$
h_i(X):=\sum_j v_{ij}{\rm FPdim}(F_i(X)).
$$

\begin{corollary}\label{co1} The functionals $h_i$ are subadditive. 
\end{corollary}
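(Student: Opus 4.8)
The plan is to deduce subadditivity of the functionals $h_i$ directly from the ``exactness in the middle'' property of the functors $G_i$ established in Proposition \ref{longex}, combined with the fact that ${\rm FPdim}$ is additive on short exact sequences in $\C$ (indeed ${\rm FPdim}$ is additive on the whole Grothendieck ring, \cite{EGNO}, 4.5). First I would fix a short exact sequence $0\to X\to Y\to Z\to 0$ in $\C$ and apply Proposition \ref{longex} to obtain the $6$-periodic long exact sequence relating the $G_i(X),G_i(Y),G_i(Z)$ and $G_{p-i}(X),G_{p-i}(Y),G_{p-i}(Z)$. The key observation is that exactness at the term $G_i(Y)$ means the composition series of $G_i(Y)$ is ``dominated'' by that of $G_i(X)\oplus G_i(Z)$, i.e. every simple object of $\C$ occurs in $G_i(Y)$ with multiplicity at most its multiplicity in $G_i(X)\oplus G_i(Z)$; this is exactly the last sentence of Proposition \ref{longex}.

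Next I would translate this domination of composition series into an inequality of Frobenius--Perron dimensions. Since ${\rm FPdim}$ of an object of $\C$ is the sum of ${\rm FPdim}$ of its composition factors (counted with multiplicity), and since ${\rm FPdim}(L)>0$ for every simple $L$, domination of composition series immediately gives
\[
{\rm FPdim}(G_i(Y))\le {\rm FPdim}(G_i(X))+{\rm FPdim}(G_i(Z)),
\]
which is precisely $h_i(Y)\le h_i(X)+h_i(Z)$. Finally I would note that $h_i$ is a well-defined group homomorphism $\Gamma\to\BR$ on the split Grothendieck group, since $G_i$ is additive by Lemma \ref{addg} and ${\rm FPdim}$ is a homomorphism on $\Gamma$; hence $h_i$ is a subadditive functional in the sense of the definition above. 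The formula $h_i(X)=\sum_j v_{ij}{\rm FPdim}(F_i(X))$ recorded just before the corollary is not needed for the argument, though one could alternatively phrase everything in terms of the $F_i$ via Lemma \ref{le1}(ii).

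I do not expect a serious obstacle here: the real content has already been extracted in Proposition \ref{longex}, and what remains is the elementary step that a homomorphism built from ${\rm FPdim}$ of composition factors respects the domination of composition series. The only point requiring a line of care is to confirm that ``exact in the middle'' genuinely yields multiplicity-wise domination of composition factors in the abelian category $\C$ (not merely an inequality of lengths): this follows because for an exact sequence $G_i(X)\to G_i(Y)\to G_i(Z)$ the object $G_i(Y)$ is an extension of a subobject of $G_i(Z)$ by a quotient of $G_i(X)$, so its composition factors form a sub-multiset of those of $G_i(X)\sqcup G_i(Z)$. Given positivity of ${\rm FPdim}$ on simples, the desired inequality is then immediate.
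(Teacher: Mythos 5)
Your proof is correct and follows the same route as the paper: the paper's argument is exactly to invoke Proposition \ref{longex} for the domination of the composition series of $G_i(Y)$ by that of $G_i(X)\oplus G_i(Z)$ and conclude via additivity and positivity of ${\rm FPdim}$ on composition factors. You merely spell out the (correct) details that the paper leaves implicit, including the justification that exactness in the middle gives multiplicity-wise domination.
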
 

\begin{proof} By Proposition \ref{longex}, the composition series of $G_i(Y)$ is dominated by that of $G_i(X)\oplus G_i(Z)$, which implies the statement. 
\end{proof} 

\begin{corollary}\label{co2} 
Let $\bold a:=(a_1,...,a_{p-1})$ be any sequence of positive numbers such that $a_i=a_{p-i}$ for all $i$, and 
$\bold a$ is concave, i.e., the sequence $a_{m+1}-a_m$ for $m=0,...,p-1$ (with $a_0=a_p:=0$) is decreasing. Then 
the functional $h_\bold a(X):=\sum_j a_j{\rm FPdim}(F_j(X))$ is subadditive. 
\end{corollary}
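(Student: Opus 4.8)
The plan is to reduce Corollary \ref{co2} to Corollary \ref{co1}, which already establishes subadditivity of each $h_i(X)={\rm FPdim}(G_i(X))$. The key observation is that the functionals $h_i$ and $h_{\bold a}$ live in the same linear span: both are linear combinations of the basic functionals $g_j(X):={\rm FPdim}(F_j(X))$, $j=1,\dots,p-1$, with the relation $h_i=\sum_j v_{ij}\,g_j$ coming from Lemma \ref{le1}(ii), where $\bold v_i=(1,2,\dots,i,i,\dots,i,\dots,2,1)$. So it suffices to show that the vector $\bold a=(a_1,\dots,a_{p-1})$, viewed as giving the functional $h_{\bold a}=\sum_j a_j g_j$, lies in the cone $\mathbb{R}_{\ge 0}\langle \bold v_1,\dots,\bold v_{[p/2]}\rangle$ spanned by the vectors $\bold v_i$ (note $\bold v_i=\bold v_{p-i}$, so only $i\le p/2$ matter). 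Indeed, a nonnegative linear combination of subadditive functionals is subadditive — subadditivity is preserved by addition and by multiplication by a nonnegative scalar, both immediate from the definition — so once $\bold a=\sum_{i=1}^{[p/2]}c_i\bold v_i$ with $c_i\ge 0$, Corollary \ref{co1} gives the result.

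So the real content is the linear-algebra claim: every concave symmetric positive sequence $\bold a$ is a nonnegative combination of the $\bold v_i$. First I would compute the "second difference" of $\bold v_i$. Writing $\bold v_i=(v_{i1},\dots,v_{i,p-1})$ and extending by $v_{i0}=v_{ip}=0$, the sequence of first differences $v_{i,m+1}-v_{im}$ for $m=0,\dots,p-1$ is $(1,1,\dots,1,0,\dots,0,-1,\dots,-1)$: it equals $+1$ for $m=0,\dots,i-1$, equals $0$ for $m=i,\dots,p-i-1$, and equals $-1$ for $m=p-i,\dots,p-1$. Hence the second difference $(\Delta^2 \bold v_i)_m := (v_{i,m+1}-v_{im})-(v_{im}-v_{i,m-1})$ for $m=1,\dots,p-1$ is supported at exactly two places: it is $+1$ at $m=i$ and $+1$ at $m=p-i$ (and $0$ elsewhere), except when $i=p-i$ (only possible for $p$ even, $i=p/2$), where it is $+2$ at $m=p/2$. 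Meanwhile, concavity of $\bold a$ says precisely that $-\Delta^2\bold a$ has all entries $\ge 0$, i.e. $b_m:=(a_m-a_{m+1})-(a_{m-1}-a_m)\le 0$; set $c_m:=-b_m\ge 0$ for $m=1,\dots,p-1$, and symmetry $a_m=a_{p-m}$ forces $c_m=c_{p-m}$.

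Now one simply matches up: I claim $\bold a=\sum_{i=1}^{[p/2]} c_i\,\bold v_i$ (with the convention that for $p$ even the $i=p/2$ term uses coefficient $c_{p/2}/2$, absorbing the factor $2$ noted above; alternatively phrase the sum as $\frac12\sum_{i=1}^{p-1}c_i\bold v_i$ using $\bold v_i=\bold v_{p-i}$ and $c_i=c_{p-i}$). To verify this, note that a sequence $\bold x$ with $x_0=x_p=0$ is determined by its second-difference vector $(\Delta^2\bold x)_1,\dots,(\Delta^2\bold x)_{p-1}$ — this is just the statement that the discrete Laplacian with Dirichlet boundary conditions on $p-1$ points is invertible. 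The second difference of the right-hand side is, by the computation above, the vector with $m$-th entry $c_m$ (the $+1$ at position $i$ from $\bold v_i$ and the $+1$ at position $i$ coming from $\bold v_{p-i}=\bold v_i$ together contribute $c_i+c_{p-i}=2c_i$, matched by the $\frac12$; one checks the even case similarly). Since $-\Delta^2\bold a$ also has $m$-th entry $c_m$, the two sequences have equal second differences and equal boundary values, hence are equal. All coefficients $c_i$ are $\ge 0$, and strictly we should also observe that positivity of $\bold a$ itself isn't needed for the decomposition — only concavity and symmetry — though it is part of the hypotheses; in fact positivity follows from concavity plus $a_0=a_p=0$ and nonnegativity, so the hypothesis is self-consistent.

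The main obstacle, such as it is, is purely bookkeeping: getting the boundary conventions ($a_0=a_p=0$, $m$ ranging over $0,\dots,p-1$ for first differences and $1,\dots,p-1$ for second differences) and the $p$-even diagonal case ($i=p/2$) exactly right, so that the claimed identity $\bold a=\sum c_i\bold v_i$ holds on the nose. There is no analytic or categorical difficulty here — Corollary \ref{co1} and Lemma \ref{le1}(ii) do all the heavy lifting, and what remains is the elementary fact that a concave symmetric function vanishing at the endpoints is a nonnegative combination of the "tent functions" $\bold v_i$. I would present the proof in exactly this order: (1) recall $h_{\bold a}$ and $h_i$ are linear in the $g_j={\rm FPdim}\circ F_j$ and that nonnegative combinations of subadditive functionals are subadditive; (2) reduce to writing $\bold a\in\mathbb{R}_{\ge0}\langle\bold v_i\rangle$; (3) compute second differences of $\bold v_i$; (4) read off the coefficients $c_i=-(\Delta^2\bold a)_i\ge 0$ from concavity and conclude by uniqueness of the Dirichlet problem; (5) invoke Corollary \ref{co1}.
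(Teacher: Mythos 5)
Your proof is correct and takes essentially the same route as the paper: both reduce via Corollary \ref{co1} to expressing $\bold a$ as a nonnegative combination of the vectors $\bold v_i$, $1\le i\le p/2$, and both arrive at the same coefficients $2a_i-a_{i-1}-a_{i+1}\ge 0$ (the paper solves the triangular linear system by successive elimination, while you phrase the same computation as inverting the discrete Laplacian with zero boundary values). The only blemishes are two harmless sign slips in your bookkeeping --- the second difference of the tent vector $\bold v_i$ is $-1$ (not $+1$) at $m=i$ and $m=p-i$, and for concave $\bold a$ one has $b_m=2a_m-a_{m-1}-a_{m+1}\ge 0$ (not $\le 0$) --- which cancel, so your final formula $c_i=-(\Delta^2\bold a)_i\ge 0$ matches the paper's $x_i$.
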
 

\begin{proof} By Corollary \ref{co1}, it suffices to show that $\bold a$ is a nonnegative linear combination of $\bold v_i$ for $1\le i\le p/2$. Write 
$$
\bold a=\sum_{1\le i\le p/2}x_i\bold v_i.
$$
Then we have a system of equations
$$
x_1+...+x_{[p/2]}=a_1,\ x_1+2(x_2+...+x_{[p/2]})=a_2,
$$
and so on. Thus 
$$
x_2+...+x_{[p/2]}=a_2-a_1,
$$
etc. So we get 
$$
x_1=2a_1-a_0-a_2,\ x_2=2a_2-a_1-a_3
$$
and so on. By our assumption, these numbers are nonnegative, which implies the statement. 
\end{proof} 

\begin{example}\label{verl} Let $a_j=\frac{\sin (\pi j/p)}{\sin (\pi/p)}$. 
Since the function $\sin x$ is concave on $[0,\pi]$, the sequence $\bold a$ is concave, so
the functional $h_\bold a$ is subadditive. It is easy to check that in this case 
$$
x_j={\rm tan}\frac{\pi}{2p}\sin \frac{\pi j}{p}.
$$
\end{example}

\section{Functors between Deligne products}

\subsection{Deligne products}
Let $A$ be a finite dimensional algebra over a field $\k$ (of any characteristic) and $\A$ be the category of finite dimensional $A$-modules. Let $\C$ be a $\k$-linear Karoubian category. Recall that a (left) $A$-module in $\C$ is an object $X\in \C$ equipped with an algebra homomorphism $\phi: A\to \End X$ or, equivalently, a morphism $\widehat\phi: A\otimes X\to X$ satisfying associativity. 

\begin{definition} The Deligne tensor product $\A\boxtimes \C$ 
is the category of $A$-modules in $\C$.  
\end{definition} 

It is easy to see that the category $\A\boxtimes \C$ is Karoubian. Also, if $\C$ is abelian then this definition coincides with the usual one (\cite{EGNO}, 1.11). 

Note that any Morita equivalence between algebras $A_1$ and $A_2$ defines a natural equivalence 
$\A_1\boxtimes \C\to \A_2\boxtimes \C$, where $\A_i=A_i-{\rm mod}$. Namely, if 
$A_2=\bold e{\rm Mat}_n(A_1)\bold e$, where $\bold e\in {\rm Mat}_n(A_1)$ is an idempotent then this equivalence is defined by the formula 
$X\mapsto \bold e X^n$, where $X^n:=X^{\oplus n}$. So the notation $\A\boxtimes \C$ is justified.

In fact, the following proposition, communicated to us by D. Arinkin, provides an alternative definition of $\A\boxtimes \C$ for a finite abelian category $\A$ which does not use any choice of an algebra $A$ at all. Namely, let ${\rm Pr}(\A)$ be the full subcategory of projective objects of $\A$, and ${\rm Fun}({\rm Pr}(\A)^{\rm op},\C)$ be the category of additive functors from ${\rm Pr}(\A)^{\rm op}$ to $\C$. 

\begin{proposition}\label{chara} If $\A=A-{\rm mod}$ then there is a natural equivalence 
$$
E: {\rm Fun}({\rm Pr}(\A)^{\rm op},\C)\to \A\boxtimes \C.
$$ 
\end{proposition}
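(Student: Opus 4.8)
The plan is to construct the equivalence $E$ explicitly and then exhibit a quasi-inverse. Recall that $\A = A\text{-}\mathrm{mod}$ for a finite dimensional algebra $A$, and fix the projective generator $P := A$ itself, so that $\mathrm{Pr}(\A) = \add(P)$ and $\End_\A(P)^{\op} \cong A$. Given an object $X\in \A\boxtimes\C$, i.e.\ an object of $\C$ with an algebra map $A\to \End_\C X$, I would define a functor $\Phi_X\colon \mathrm{Pr}(\A)^{\op}\to\C$ on $P$ by $\Phi_X(P) := X$, with the right action of $\End_\A(P)^{\op}\cong A$ on $X$ being the given module structure; since every object of $\mathrm{Pr}(\A)$ is a direct summand of $P^{\oplus n}$ and $\C$ is Karoubian, $\Phi_X$ extends uniquely (up to canonical isomorphism) to all of $\mathrm{Pr}(\A)^{\op}$ as an additive functor, by sending an idempotent $\bold e\in \End_\A(P^{\oplus n}) = \mathrm{Mat}_n(A)$ to the corresponding idempotent acting on $X^{\oplus n}$ in $\C$ and taking its image. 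This assignment is visibly functorial in $X$, giving a functor $\A\boxtimes\C\to \mathrm{Fun}(\mathrm{Pr}(\A)^{\op},\C)$; I would take $E$ to be a quasi-inverse, or equivalently build $E$ in the opposite direction by evaluation at $P$.

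Concretely, for the functor in the $E$ direction: given an additive functor $T\colon \mathrm{Pr}(\A)^{\op}\to\C$, set $E(T) := T(P)$; this is an object of $\C$, and functoriality of $T$ applied to the right multiplication endomorphisms of $P$ (which compose to give $\End_\A(P)^{\op}\cong A$) equips $T(P)$ with an algebra homomorphism $A\to\End_\C(T(P))$, i.e.\ makes it an object of $\A\boxtimes\C$. The two constructions are mutually inverse essentially by the universal property of the Karoubi envelope: an additive functor out of $\mathrm{Pr}(\A)^{\op}$ is determined up to unique isomorphism by its restriction to the single object $P$ together with the action of $\End_\A(P)^{\op}$, because $\mathrm{Pr}(\A)^{\op}$ is the idempotent completion of the one-object category $\End_\A(P)^{\op}$, and $\C$ being Karoubian means additive functors from an idempotent-complete category are the same as additive functors from its dense one-object subcategory. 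I would spell this out as: restriction along $\{P\}\hookrightarrow \mathrm{Pr}(\A)^{\op}$ gives an equivalence $\mathrm{Fun}(\mathrm{Pr}(\A)^{\op},\C)\xrightarrow{\sim} \mathrm{Fun}(\End_\A(P)^{\op},\C)$, and the latter category of additive functors is by definition the category of objects of $\C$ with an $A$-action, namely $\A\boxtimes\C$.

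The main obstacle — really the only nontrivial point — is verifying that restriction to $P$ is an equivalence, i.e.\ that every additive functor $T\colon\mathrm{Pr}(\A)^{\op}\to\C$ is recovered from $T(P)$ and the induced $A$-action, and that natural transformations correspond. Essential surjectivity and fidelity are formal; fullness requires that a morphism $T(P)\to T'(P)$ in $\C$ commuting with the $A$-action actually extends to a natural transformation $T\Rightarrow T'$ on all summands of powers of $P$, which one checks by writing an arbitrary object $Q\in\mathrm{Pr}(\A)$ as $\Image(\bold e)$ for an idempotent $\bold e$ on $P^{\oplus n}$ and using that $T(Q)$ is canonically $\Image(T(\bold e))\subseteq T(P)^{\oplus n}$ — here one must confirm that an additive functor automatically sends the idempotent-splitting of $Q$ in $\mathrm{Pr}(\A)^{\op}$ to the idempotent-splitting of $T(Q)$ in the Karoubian category $\C$, which is where Karoubianness of $\C$ is used. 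I expect this to go through cleanly; the statement is essentially a recognition that $\A\boxtimes\C = \mathrm{Fun}(\mathrm{Pr}(\A)^{\op},\C)$ is just the Morita-invariant, presentation-free repackaging of "$A$-modules in $\C$", with the Morita invariance already noted in the paragraph preceding the proposition serving as a consistency check.
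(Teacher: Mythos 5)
Your proposal is correct and follows essentially the same route as the paper: the paper's proof also identifies $\A\boxtimes\C$ with $\k$-linear functors out of the one-object category with endomorphism ring $A$, observes that ${\rm Pr}(\A)^{\rm op}$ is its Karoubi envelope, and concludes by the universal property of the Karoubi envelope, exactly as in your restriction-to-$P$ argument (your explicit evaluation functor $T\mapsto T(A)$ and idempotent-splitting quasi-inverse are the same construction, just spelled out). No gaps; the only point worth keeping as careful as you did is the ${}^{\rm op}$ bookkeeping, since $\End_\A(A)\cong A^{\rm op}$.
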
 

\begin{proof} 

   By the definition the category $\A\boxtimes \C$ is the category 
of $\k-$linear functors $\underline{A}\to \C$ where $\underline{A}$ is the one-object category with
endomorphism ring $A$. Observe that the category ${\rm Pr}(\A)^{\rm op}$ coincides with the Karoubi
(or pseudo-abelian) envelope of the category $\underline{A}$. The result follows immediately from the universal
property of the Karoubi envelope, see e.g. \cite[1.9]{Desym}.

\begin{comment}
For an additive functor $F: {\rm Pr}(\A)^{\rm op}\to \C$ set $E(F):=F(A)$. This defines the desired functor $E$. It remains to show that $E$ is an equivalence of categories. To this end, let us construct a quasi-inverse $E'$. 

Let $X$ be an $A$-module in $\C$, and $P$ a projective object of $\A$. 
In this case we can canonically define the object $\Hom_A(P,X)\in \C$. Namely, if $\nu: P\cong A^n\bold p$ is an isomorphism, where $\bold p\in {\rm Mat}_n(A)$ is an idempotent, then set $\Hom_A(P,X)_\nu:=\bold pX^n$. Now, given another such representation $\mu: P\cong A^m\bold q$, for an idempotent $\bold q\in {\rm Mat}_m(A)$, we have an isomorphism $\nu\mu^{-1}$ between them given by an element $f_{\nu\mu}\in \bold p {\rm Mat}_{n,m}(A)\bold q$. 
This element defines an isomorphism $\Hom_A(P,X)_\mu\to \Hom_A(P,X)_\nu$. 
Since $f_{\nu\mu}\circ f_{\mu\xi}=f_{\nu\xi}$, we have a canonically defined object 
$\Hom_A(P,X)$, as claimed. 

Now define $E'(X)$ by $E'(X)(P):=\Hom_A(P,X)$. Then $E'$ is an additive functor 
$\A\boxtimes \C\to {\rm Fun}({\rm Pr}(\A)^{\rm op},\C)$. 
It is easy to check that $E\circ E'$ and $E'\circ E$ are isomorphic to the identity functors, 
as desired. 
\end{comment}
\end{proof} 

Let $F: \C\to \D$ be an additive ($\k$-linear) functor between Karoubian $\k$-linear categories. If $X\in \C$ is equipped with an action of $A$, then so is $F(X)$. So we have a natural additive functor $F_\A: \A\boxtimes \C\to \A\boxtimes \D$. 

\subsection{Monoidal Deligne products} 
Now suppose that $\A$ is a multiring category (i.e. it has a monoidal structure with biexact tensor product, see \cite{EGNO}, Definition 4.2.3), for example a multitensor category (\cite{EGNO}, Definition 4.1.1).  

\begin{proposition}\label{delprod} (i) If $\C$ is a Karoubian monoidal category with bilinear tensor product then so is $\A\boxtimes \C$. 

(ii) If in addition $\D$ is Karoubian monoidal with bilinear tensor product and $F:\C\to \D$ is an additive monoidal functor then so is $F_\A$. 
\end{proposition}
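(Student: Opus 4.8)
The plan is to build the monoidal structure on $\A\boxtimes\C$ by hand from the monoidal structures on $\A$ and $\C$, using the concrete description of objects of $\A\boxtimes\C$ as $A$-modules in $\C$ for a fixed finite-dimensional algebra $A$ with $\A=A\text{-mod}$, and then to make the (standard) verification that the monoidal structure is independent of the choice of $A$ up to the Morita equivalences already discussed before the statement. So fix $A$, and recall that the monoidal structure on $\A$ is given by a coalgebra-like datum: since tensor product on $\A=A\text{-mod}$ is biexact and $\k$-bilinear, it is represented by an $(A\otimes A)$-$A$-bimodule, equivalently the tensor product of $A$-modules $M,N$ is $M\boxtimes_\A N=(M\otimes_\k N)\otimes_{A\otimes A}P$ for a suitable bimodule $P$; equivalently (and more usefully here) there is a homomorphism $\Delta\colon A\to A\otimes A$ together with higher coherence data making $A$ into a "pseudo-bialgebra" whose comodule/module category recovers $(\A,\boxtimes_\A)$. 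First I would set up this datum cleanly, perhaps by invoking Proposition \ref{chara} to reformulate $\A\boxtimes\C$ as $\mathrm{Fun}(\mathrm{Pr}(\A)^{\mathrm{op}},\C)$, since Day convolution against the monoidal structure of $\mathrm{Pr}(\A)$ gives a coordinate-free construction of the tensor product on functor categories into a monoidal target.

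Concretely, for (i), given $A$-modules $X,Y$ in $\C$, I would define $X\boxtimes_{\A\boxtimes\C}Y$ as the image of an idempotent on $X\otimes_\C Y$: namely $X\otimes_\C Y$ carries an action of $A\otimes A$ (acting through the two tensor factors, using bilinearity of $\otimes_\C$), hence of $A$ via $\Delta$; but to land in the correct Morita class one applies the idempotent in $\mathrm{Mat}_n(A\otimes A)$ that cuts out the bimodule $P$ representing $\boxtimes_\A$. Karoubianness of $\C$ (hence of $X\otimes_\C Y$'s endomorphisms, plus closure under images of idempotents) is exactly what makes this image exist — this is where the Karoubian hypothesis is used, and it is the only place it is essential. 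The associativity and unit constraints for $\boxtimes_{\A\boxtimes\C}$ are then built from those of $\boxtimes_\A$ (the coherence data of the pseudo-bialgebra $A$) tensored with the constraints of $\boxtimes_\C$, and the hexagon/pentagon/triangle axioms follow formally. For (ii), given an additive monoidal $F\colon\C\to\D$ with structure isomorphism $J_{X,Y}\colon F(X)\otimes_\D F(Y)\to F(X\otimes_\C Y)$, I would observe that $J$ is automatically a morphism of $A\otimes A$-modules (functoriality of $J$ plus additivity of $F$), hence descends through $\Delta$ and through the idempotent defining $\boxtimes$, giving the monoidal structure on $F_\A$; the coherence axioms for $F_\A$ reduce to those for $F$ after applying $F$ to the coherence data of $A$.

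The main obstacle — and the reason this deserves a proof rather than a one-line "clear" — is bookkeeping the coherence: the monoidal structure on a general multiring category $\A$ is not literally that of a bialgebra, so "$\Delta\colon A\to A\otimes A$" is a simplification, and one must either (a) genuinely work with the representing bimodule $P$ and its higher coherences, or (b) use the Day-convolution reformulation via Proposition \ref{chara}, where $\mathrm{Pr}(\A)$ is an honest monoidal category and the convolution $(G_1\boxtimes G_2)(P)=\int^{P_1,P_2} G_1(P_1)\otimes_\C G_2(P_2)\otimes \mathrm{Hom}_{\mathrm{Pr}(\A)}(P,P_1\otimes P_2)$ makes all coherence automatic — provided $\C$ being merely Karoubian (not cocomplete) still lets these finite coends exist, which it does since $\mathrm{Pr}(\A)$ has finitely many indecomposables and the coends are split by idempotents. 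I would take route (b): it localizes the entire difficulty into the standard fact that Day convolution of a monoidal source into a Karoubian (idempotent-complete, additive) target is again monoidal, and that post-composition with a monoidal functor on the target is monoidal for the convolution products. Finally I would remark that the resulting $\boxtimes$ on $\A\boxtimes\C$ agrees with the usual Deligne product when $\C$ is abelian, so the notation is consistent with \cite{EGNO}, 1.11.
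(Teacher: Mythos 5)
Your proposal is correct, and your route (a) is precisely the paper's argument: realize the biexact tensor product of $\A=A\text{-mod}$ by an $(A,A^{\otimes 2})$-bimodule $T=\bold e(A^{\otimes 2})^n$ with $\bold e$ an idempotent matrix, define the product of $A$-modules $X,Y$ in $\C$ as $\bold e(X\boxtimes Y)^n$ (Karoubianness of $\C$ is used exactly to split this idempotent), transport associativity via the element $\Phi$ of the resulting ``pseudobialgebra'' structure, and observe that an additive monoidal functor $F$ respects all of this data, which gives (ii). Your preferred route (b) is essentially the coordinate-free description the paper records in the remark immediately after the proof, but with two caveats. First, for a general multiring category $\A$ the subcategory ${\rm Pr}(\A)$ need not be closed under tensor product (this is automatic for multitensor categories but not in general), so the weight in your convolution formula should be $\Hom_\A(P,P_1\otimes P_2)$, i.e.\ a promonoidal structure on ${\rm Pr}(\A)$, rather than a Hom taken inside ${\rm Pr}(\A)$. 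Second, the existence of your coends in a merely Karoubian $\C$ does not follow just from ${\rm Pr}(\A)$ having finitely many indecomposables: reducing to an additive generator $P_0$, the coend becomes the tensor product of $F_1(P_0)\otimes_\C F_2(P_0)$ over $B=\End(P_0)^{\otimes 2}$ with the $B$-module $\Hom_\A(P,P_0\otimes P_0)$, and this is computed by a split idempotent only because that $B$-module is projective --- which is exactly the projectivity of $T$ as a right $A^{\otimes 2}$-module, i.e.\ the biexactness of the tensor product of $\A$. The paper's remark avoids coends altogether: since the tensor functor $T:\A\boxtimes\A\to\A$ is exact, its left adjoint $T^\vee$ preserves projectives, and one sets $(F_1\otimes F_2)(P):=(F_1\boxtimes F_2)(T^\vee(P))$, which requires no colimits in $\C$; if you pursue route (b), either use this formulation or supply the projectivity argument above in place of the ``finitely many indecomposables'' justification.
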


\begin{proof} The proof is based on realizing $\A$ by linear-algebraic data; a similar approach was used in \cite{DEN} in the proof of Proposition 3.1. Namely, let us write $\A$ as $A-{\rm mod}$ for some finite dimensional algebra $A$. Then the structure of a multiring category on $\A$ defines a (clumsy but sometimes useful) quasibialgebra-like structure on $A$, which one may call a {\it pseudobialgebra}. Specifically, 
the tensor product, since it is exact, defines an $(A,A^{\otimes 2})$-bimodule $T$, projective as a right module, such that $X\otimes Y=T\otimes_{A^{\otimes 2}}(X\boxtimes Y)$. Moreover, we have an associativity isomorphism 
$$\Phi: T\otimes_{A^{\otimes 2}} (T\boxtimes A)\to T\otimes_{A^{\otimes 2}} (A\boxtimes T)$$ 
of $(A,A^{\otimes 3})$-bimodules satisfying the pentagon relation. 

Let us realize $T$ as  $\bold e (A^{\otimes 2})^n$, 
where $\bold e\in {\rm Mat}_n(A^{\otimes 2})$ is an idempotent. Then we have a homomorphism $\Delta: A\to \bold e {\rm Mat}_n(A^{\otimes 2})\bold e$ defining the left $A$-action on $T$. 

Let 
$$
\Delta\otimes 1: A\otimes A\to (\bold e\otimes 1){\rm Mat}_n(A\otimes A\otimes A)(\bold e\otimes 1)
$$  
and 
$$
1\otimes \Delta: A\otimes A\to (1\otimes \bold e){\rm Mat}_n(A\otimes A\otimes A)(1\otimes \bold e)
$$  
be the homomorphisms defined by $\Delta$, 
where we make the identifications 
$$
A\otimes {\rm Mat}_n(A\otimes A)\cong {\rm Mat}_n(A\otimes A\otimes A)\cong {\rm Mat}_n(A\otimes A)\otimes A.
$$
They define homomorphisms 
$$
\Delta_1: {\rm Mat}_n(A\otimes A)\to (\bold e\otimes 1){\rm Mat}_{n^2}(A\otimes A\otimes A)(\bold e\otimes 1)
$$  
and 
$$
\Delta_2: {\rm Mat}_n(A\otimes A)\to (1\otimes \bold e){\rm Mat}_{n^2}(A\otimes A\otimes A)(1\otimes \bold e).
$$  
Let $\bold e_i:=\Delta_i(\bold e)\in {\rm Mat}_{n^2}(A^{\otimes 3})$ be the corresponding idempotents. 
Then we have $T\otimes_{A^{\otimes 2}}(T\boxtimes A)=\bold e_1 (A^{\otimes 3})^{n^2}$, and 
$T\otimes_{A^{\otimes 2}}(A\boxtimes T)=\bold e_2 (A^{\otimes 3})^{n^2}$. 
We have homomorphisms $\eta_i=\Delta_i\circ \Delta: A\to \bold e_i{\rm Mat}_{n^2}(A^{\otimes 3})\bold e_i$
defining the left action of $A$ on these modules.  
Moreover, the associativity isomorphism $\Phi$ is an element of $\bold e_2{\rm Mat}_{n^2}(A^{\otimes 3})\bold e_1$
which commutes with $A$, i.e., $\Phi \eta_1(a)=\eta_2(a)\Phi$, $a\in A$. Finally, we have the distinguished $A$-module $\bold 1$ corresponding 
to the unit object of $\A$. This data (the idempotent $\bold e$, the homomorphism $\Delta$, the module $\bold 1$ and the element $\Phi$) is subject to certain axioms coming from the multiring category structure on $\A$, and may be called a {\it pseudobialgebra} structure on $A$ (the precise axioms are not important for our purposes, so we won't spell them out). 
As we will see in Example \ref{quasibial}, this generalizes Drinfeld's notion of a quasibialgebra. 

Now, the pseudobialgebra structure on $A$ allows us to define a structure of a monoidal category on the category of $A$-modules in any $\k$-linear Karoubian monoidal category $\C$ with bilinear tensor product in the same way as above, i.e., 
by the formula $X\otimes Y:=\bold e (X\boxtimes Y)^n$ (where $X\boxtimes Y$ is the 
tensor product of $X$ and $Y$ in $\C$ regarded as an $A^{\otimes 2}$-module), with the action of $A$ defined by $\Delta$ and associativity defined by $\Phi$. This implies (i).
 
Moreover, it is clear that this structure is respected by additive monoidal functors, which implies (ii). 
\end{proof} 

\begin{remark} The definition of the monoidal structure on $\A\boxtimes \C$ which does 
not use the algebra $A$ is as follows. Since the tensor product on $\A$ is exact, 
it gives rise to an exact functor $T: \A\boxtimes \A\to \A$, where $\A\boxtimes \A$ 
is the usual Deligne tensor product of $\A$ with itself. This functor therefore has a left adjoint functor 
$T^\vee$. Since $T$ is exact, $T^\vee$ maps projectives to projectives, and 
for $F_1,F_2: {\rm Pr}(\A)^{\rm op}\to \C$ we have $(F_1\otimes F_2)(P)=(F_1\boxtimes F_2)(T^\vee(P))$, where 
$F_1\boxtimes F_2: {\rm Pr}(\A\boxtimes \A)^{\rm op}\to \C$ is the external tensor product of $F_1$ and $F_2$ in $\C$ (here we use the definition of the Deligne product provided by Proposition \ref{chara}). 
\end{remark}

\begin{example} \label{quasibial}
Suppose that $\A$ is a finite integral tensor category (\cite{EGNO}, Definition 6.1.13). 
Choose the algebra $A$ in such a way that each simple 
object $X\in \A$ is represented by an $A$-module of dimension ${\rm FPdim}(X)$. Then 
the bimodule $T$ is free of rank $1$ as a right $A^{\otimes 2}$-module, and we may take $n=1$, $\bold e=1$. 
Then $\Delta: A\to A\otimes A$ is a homomorphism (the coproduct), $\Delta_1=\Delta\otimes 1$, $\Delta_2=1\otimes \Delta$, 
and so $\Phi\in A^{\otimes 3}$ is an invertible element such that 
$\Phi (\Delta\otimes 1)\Delta(a)=(1\otimes \Delta)\Delta(a)\Phi$ (the associator). 
Also we have the counit $\varepsilon: A\to \k$ corresponding to the unit object. 
Thus $(A,\Delta,\Phi, \varepsilon)$ is a quasibialgebra, and it has an antipode induced by duality in $\A$, so it is a quasi-Hopf algebra (\cite{EGNO}, 5.12, 5.13). In this case, 
the category $\A\boxtimes \C$ of $A$-modules in $\C$ 
has an obvious structure of a monoidal category, with the usual tensor product 
in $\C$, the $A$-action on $X\otimes Y$ defined by the coproduct $\Delta$ and associativity 
defined by $\Phi$. 
\end{example} 

\begin{remark}\label{anoproo} Another application of the notion of a pseudobialgebra used in the proof of Proposition \ref{delprod} is an alternative proof of Lemma \ref{extsca}. Namely, assume first that $\D$ is finite. Then we can realize $\D$ as the representation category of some pseudobialgebra $A$, and the duality on $\D$ gives rise to an antipode 
on $A$ (similarly to \cite{EGNO}, Definition 5.13.2 in the quasibialgebra case) which gives $A$ a structure one may call a {\it pseudo-Hopf algebra}. Now, it is clear that if $\D$ is the category of finite dimensional modules over a pseudo-Hopf algebra $A$ then $\D_K$ is the category of finite dimensional modules over the pseudo-Hopf algebra $A_K:= K\otimes_\k A$. This makes it evident that $\D_K$ is also a tensor category. 

Finally, the case when $\D$ is only locally finite may be treated similarly, using the dual notion of 
a copseudo-Hopf algebra and considering comodules instead of modules. 
\end{remark} 

\subsection{Monoidal functors between Deligne products arising from the semisimplification functor}
Let $\C$ be a Karoubian pivotal monoidal category over $\k$ satisfying the assumptions of \cite{EOsemi}, Subsection 2.3 (for example, a symmetric tensor category). 
In this case we can define the semisimplification $\overline{\C}$, a semisimple tensor category over $\k$, and we have an additive monoidal semisimplification functor $\bold S: \C\to \overline{\C}$ (\cite{EOsemi}, 2.2, 2.3). Thus by Proposition \ref{delprod}(ii), for any tensor category $\A$ over $\k$ we have the monoidal functor $$\bold S_\A: \A\boxtimes \C\to \A\boxtimes \overline{\C}.$$ If $\A$ is semisimple, this functor coincides with the semisimplification functor of the category $\A\boxtimes \C$. 

Let us regard $\bold S_\A$ just as an additive functor. Then 
$$
\bold S_\A(X)=\oplus_{Y\in {\rm Irr}\overline{\C}} \bold S_\A^Y(X)\otimes Y,
$$ 
where $\bold S_\A^Y: \A\boxtimes \C\to \A$ are additive functors and the set ${\rm Irr}\overline{\C}$ labels the simple objects of $\overline{\C}$. 

Recall that $Y$ may be viewed as an indecomposable object of $\C$ of nonzero dimension. Let $\A=A-{\rm mod}$. 
Then we have 
$$
\bold S_\A^Y(X)=\Hom_\C(Y,X)/\mathcal{N}(Y,X),
$$ 
where $\mathcal{N}(Y,X)$ is the subspace of negligible morphisms. 
It is clear that $\mathcal{N}(Y,X)$ is an $A$-submodule of $\Hom_\C(Y,X)$, so 
$\bold S_\A^Y(X)$ is naturally an $A$-module, i.e., an object of $\A$. 
The expression for $\bold S_\A^Y$ not involving the algebra $A$ is 
$$
\bold S_\A^Y(X)(P)=\Hom_\C(Y,X(P))/\mathcal{N}(Y,X(P)),\quad P\in {\rm Pr}(\A)^{\rm op}. 
$$
(using the definition of the Deligne product provided by Proposition \ref{chara}). 

\begin{example} \label{natfunc}
Let ${\rm char}(\k)=p$, $\C=\Rep_\k(\Bbb Z/p)$, and $\D=\overline{\C}$
its semisimplification -- the Verlinde category ${\rm Ver}_p$ with simple objects $L_1=\be,...,L_{p-1}$. Let $\A$ be a finite abelian category over $\k$. It is clear that $\A\boxtimes \C$ is the equivariantization $\A^{\Bbb Z/p}$ for the trivial action of $\Bbb Z/p$ on $\A$, so 
$\bold S^{L_i}_\A: \A^{\Bbb Z/p}\to \A$. Setting $D:=(1-c)|_X$, where $c$ is the generator of $\Bbb Z/p$, 
we have $\Hom(L_i,X)={\rm Ker}D^i$ and 
$\mathcal{N}(L_i,X)={\rm Ker}D^i\cap {\rm Im}D+{\rm Ker}D^{i-1}$. 
Thus
$$
\bold S_\A^{L_i}(X)=\frac{{\rm Ker}D^i}{{\rm Ker}D^i\cap {\rm Im}D+{\rm Ker}D^{i-1}}.
$$
It is easy to check that the morphism $D^{i-1}$ maps this object isomorphically to 
$$
\frac{{\rm Ker}D\cap {\rm Im}D^{i-1} }{{\rm Ker}D\cap {\rm Im}D^i}=B_i(X).
$$
Thus, we have 
$$
\bold S_\A(X)\cong B(X):=\oplus_{i=1}^{p-1}B_i(X)\otimes L_i, 
$$
which in particular implies 

\begin{proposition}\label{bmon} $B$ is a monoidal functor. 
\end{proposition}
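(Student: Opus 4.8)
The plan is to deduce this from the monoidality of $\bold S_\A$, rather than to verify compatibility with tensor products by hand. Recall that $\C=\Rep_\k(\mathbb{Z}/p)$ is a symmetric tensor category whose semisimplification is $\overline{\C}=\Ver_p$, so the semisimplification functor $\bold S\colon\C\to\overline{\C}$ is an additive monoidal functor. Taking $\A$ to be a tensor category over $\k$ (so that both Deligne products below are genuine monoidal categories), Proposition \ref{delprod}(ii) applied to $F=\bold S$ shows that $\bold S_\A\colon\A\boxtimes\C\to\A\boxtimes\overline{\C}$ is an additive monoidal functor. Under the identifications $\A\boxtimes\C\cong\A^{\mathbb{Z}/p}$ (trivial action), equivalently the category $\A_p$ of objects of $\A$ equipped with an endomorphism $D$ satisfying $D^p=0$, and $\A\boxtimes\overline{\C}=\A\boxtimes\Ver_p$, this $\bold S_\A$ is a monoidal functor between exactly the two monoidal categories on which $B$ is defined.

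Next I would invoke the computation already recorded in Example \ref{natfunc}. Writing $\bold S_\A=\bigoplus_{i=1}^{p-1}\bold S_\A^{L_i}(-)\otimes L_i$ for the decomposition into isotypic components for the simple objects $L_i$ of $\Ver_p$, one has $\bold S_\A^{L_i}(X)=\Ker D^i/(\Ker D^i\cap\Im D+\Ker D^{i-1})$, and the endomorphism $D^{i-1}$ induces an isomorphism of this object onto $B_i(X)$. Since $D$ is a central, functorial endomorphism, this map is natural in $X\in\A^{\mathbb{Z}/p}$; assembling over $i$ and tensoring with $L_i$ yields a natural isomorphism of additive functors $\Phi\colon\bold S_\A\cong B$ into $\A\boxtimes\Ver_p$. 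Finally, a monoidal structure transports along such an isomorphism: setting $J^B_{X,Y}:=\Phi_{X\otimes Y}\circ J^{\bold S_\A}_{X,Y}\circ(\Phi_X\otimes\Phi_Y)^{-1}$, and conjugating the unit constraint of $\bold S_\A$ by $\Phi_{\bold 1}$, the pentagon, hexagon and unitality axioms for $(B,J^B)$ follow from those for $(\bold S_\A,J^{\bold S_\A})$ by naturality of $\Phi$. Hence $B$ is monoidal.

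There is no serious obstacle here, and that is the point of organizing the argument this way. A direct proof — verifying $B(X\otimes Y)\cong B(X)\otimes B(Y)$ compatibly with associativity — would amount to controlling the Jordan type of tensor products of $\k[\mathbb{Z}/p]$-modules together with the Verlinde fusion rules, a Littlewood--Richardson-type computation in the spirit of Proposition \ref{spli}, and would be considerably more work. The only points that require any care are that the componentwise isomorphisms $D^{i-1}\colon\bold S_\A^{L_i}(X)\to B_i(X)$ be natural in $X$ (immediate, since $D$ commutes with every morphism in $\A^{\mathbb{Z}/p}$), and that the semisimplification functor $\bold S$ of $\Rep_\k(\mathbb{Z}/p)$ be genuinely monoidal, which is built into its construction in \cite{EOsemi} and was already used in Subsection 4.3.
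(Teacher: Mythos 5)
Your proposal is correct and follows essentially the same route as the paper: the paper derives Proposition \ref{bmon} precisely by noting that $\bold S_\A$ is monoidal (Proposition \ref{delprod}(ii) applied to the semisimplification functor of Subsection 4.3) and then using the identification $\bold S_\A^{L_i}(X)\cong B_i(X)$ via $D^{i-1}$ computed in Example \ref{natfunc}, so that $\bold S_\A\cong B$. Your extra care in spelling out the naturality of this isomorphism and the transport of the monoidal structure is exactly what the paper leaves implicit.
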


For example, for $p=2$, ${\rm Ver}_p=\Vec_\k$, so we have $B(X)=B_1(X)={\rm Ker}D/{\rm Im} D$, the cohomology of $D$. This is a monoidal functor $\A^{\Bbb Z/2}\to \A$. 
\end{example}

\begin{remark} If the tensor product in $\A$ is only right exact then the functor $\bold S_\A=B: \A^{\Bbb Z/2}\to \A$ does not have to be monoidal. For example, let $\A$ be the category of finite dimensional modules over $R=\k[x]/x^2$ (${\rm char}(\k)=2$), with tensor product over $R$. Let $X=X_0\oplus X_1$, where $X_0=R$ and $X_1=\k$, with $D: X_0\to X_1$ being the augmentation map, and $D|_{X_1}=0$. Also let $Y=\k$. Then $X\otimes Y=\k^2$ with $D(e_1)=e_2$ and $D(e_2)=0$ (where $e_1,e_2$ is the standard basis of $\k^2$), so $B(X\otimes Y)=0$. On the other hand, $B(X)=B(Y)=\k$, so $B(X)\otimes B(Y)=\k$, i.e., $B(X\otimes Y)\ncong B(X)\otimes B(Y)$. 
\end{remark}

\begin{remark} The notion of Deligne tensor product  easily generalizes to the case when $\A$ is an artinian category (i.e., locally finite but not necessarily finite, \cite{EGNO}, 1.8), by replacing the algebra $A$ by the coalgebra $C=A^*$, which can then be taken to be infinite dimensional, and considering $C$-comodules in $\mathcal{C}$. The same applies to the theory of monoidal Deligne products and monoidal functors between them described above. We leave the details to the interested reader. 
\end{remark}

\section{The Frobenius functor}
Let $\C$ be a symmetric tensor category over a field $\k$ of characteristic $p$, and let $\C^{(n)}$ be the $n$-th Frobenius twist of $\C$. The second author introduced\footnote{More precisely, in \cite{O} the functor $F$ is defined in the case when $\C$ is semisimple, but as shown above, the definition makes sense in general.} in \cite{O} a functor 
$F: \C\to \C^{(1)}\boxtimes \Ver_p$ given by 
$$
F(X):=\oplus_{i=1}^{p-1}F_i(X)\otimes L_i\in \C^{(1)}\boxtimes \Ver_p, 
$$
where the functors $F_i$ are defined in Subsection \ref{6per}. 
In other words, we have $F(X)=B(X^{\otimes p})$, where the functor $B$ is defined in Example \ref{natfunc}.

\begin{proposition}\label{addmon} (i) The functor $F$ has a natural structure of an additive monoidal functor. 

(ii) $F$ commutes with symmetric tensor functors. 
\end{proposition}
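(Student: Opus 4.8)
The plan is to assemble both parts from the pieces already in place. For the additivity in (i), I would simply combine Lemma~\ref{addg}, which gives additivity of each $F_i$, with the observation that $F(X)=\oplus_i F_i(X)\otimes L_i$ is a direct sum of the functors $X\mapsto F_i(X)\otimes L_i$; additivity passes through finite direct sums and through the (fixed) functor $(-)\otimes L_i:\Ver_p\to \C^{(1)}\boxtimes\Ver_p$. So the substance of (i) is entirely the monoidal structure.

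For the monoidal structure, the key identity is the one highlighted after Example~\ref{natfunc}: $F(X)=B(X^{\otimes p})$, where $B:\A^{\Z/p}\to\A\boxtimes\Ver_p$ is the semisimplification functor of Example~\ref{natfunc}, applied with $\A=\C^{(1)}$. The point is that the $p$-th tensor power functor $X\mapsto X^{\otimes p}$ carries a canonical $\Z/p$-equivariant structure (cyclic permutation of the factors), and it is lax/strong monoidal as a functor $\C\to (\C^{(1)})^{\Z/p}$ once one interprets the target correctly — indeed $(X\otimes Y)^{\otimes p}$ contains $X^{\otimes p}\otimes Y^{\otimes p}$ as the $\Z/p$-invariant ``diagonal'' summand, and this inclusion is exactly the place where the Frobenius twist and the category $\Ver_p$ enter. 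So the strategy is: (a) equip $P(X):=X^{\otimes p}$, viewed in $(\C^{(1)})^{\Z/p}$, with a monoidal structure $P(X)\otimes P(Y)\to P(X\otimes Y)$ coming from the canonical embedding of the diagonal summand, checking it is an isomorphism onto a direct summand and compatible with associativity and the symmetry of $\C$; (b) invoke Proposition~\ref{bmon} (together with Proposition~\ref{delprod}(ii), which is what makes $\bold S_\A$ monoidal here) to conclude that $B$ is monoidal; (c) compose the two monoidal structures to get the monoidal structure on $F=B\circ P$. One must also check the unit: $P(\be)=\be^{\otimes p}=\be$ with trivial $\Z/p$-action, and $B(\be)=\be$, so $F(\be)=\be$ as required.

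For (ii), ``$F$ commutes with symmetric tensor functors,'' let $H:\C\to\mathcal{D}$ be a symmetric tensor functor. Then $H$ intertwines the cyclic action on $p$-th tensor powers (because $H$ is symmetric and monoidal, it sends the symmetry $c$ on $X^{\otimes p}$ to the symmetry on $H(X)^{\otimes p}$), hence $H$ induces a functor $(\C^{(1)})^{\Z/p}\to(\mathcal{D}^{(1)})^{\Z/p}$ commuting with $P$; and since $H$ is exact it commutes with the formation of kernels and images, hence with $B_i$ and with $B$; finally the Frobenius twist and $\boxtimes\Ver_p$ are functorial. Chaining these, $F\circ H\cong (H^{(1)}\boxtimes\id_{\Ver_p})\circ F$ compatibly with the monoidal structures.

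The main obstacle is step (a): making precise and coherent the monoidal structure on the $p$-th power functor landing in the equivariantization, i.e., constructing the natural isomorphism identifying $X^{\otimes p}\otimes Y^{\otimes p}$ with the relevant summand of $(X\otimes Y)^{\otimes p}$ in a way that is $\Z/p$-equivariant and satisfies the pentagon — this is the technical heart, and it is exactly the structure (decomposition of $(X\oplus Y)^{\otimes p}$ into a ``pure'' part plus a free $\k[D]/D^p$-module) that was isolated in the proof of Lemma~\ref{addg}. Everything else is formal once Propositions~\ref{delprod} and~\ref{bmon} are granted.
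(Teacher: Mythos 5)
Your overall route is the paper's: write $F=B\circ P$ with $P(X)=X^{\otimes p}$ valued in the equivariantization, get monoidality of $B$ from Proposition~\ref{bmon}, and get additivity from Lemma~\ref{addg}. But your step (a) --- the monoidal structure on $P$ --- is where the proposal goes wrong. You describe the structure map $P(X)\otimes P(Y)\to P(X\otimes Y)$ as ``the canonical embedding of the diagonal summand,'' to be checked to be ``an isomorphism onto a direct summand,'' and you say the technical heart is ``exactly'' the decomposition of $(X\oplus Y)^{\otimes p}$ into a pure part plus a free $\k[D]/D^p$-module from Lemma~\ref{addg}. This conflates two different things. That decomposition concerns direct sums and is what gives additivity; it plays no role in the tensor structure. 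For monoidality one needs a genuine natural isomorphism $P(X)\otimes P(Y)\xrightarrow{\ \sim\ }P(X\otimes Y)$ in $(\C^{(1)})^{\mathbb{Z}/p}$, and this is simply the symmetry of $\C$: the shuffle $(X\otimes Y)^{\otimes p}\cong X^{\otimes p}\otimes Y^{\otimes p}$ is $\mathbb{Z}/p$-equivariant (cyclic action on the source, diagonal action on the target, which is the tensor product in the equivariantization), and it is compatible with associativity and the braiding; no summands occur, and ${\rm Ver}_p$ enters only through $B$ (the semisimplification of $\Rep_\k(\mathbb{Z}/p)$), not through any summand inclusion in $P$. If your structure map were only an isomorphism onto a proper summand, $F$ would at best be lax monoidal, which is not what is claimed and would not yield, e.g., the fusion-rule formula $F_i(X\otimes Y)\cong\oplus_{j,s}F_j(X)\otimes F_{|i-j|+2s-1}(Y)$ that the paper deduces from this proposition.

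A second, smaller gap: additivity of $F$ is not only additivity on objects. The assignment $X\mapsto X^{\otimes p}$ is not additive on morphisms (since $(f+g)^{\otimes p}\neq f^{\otimes p}+g^{\otimes p}$), so one must check that the cross terms die after applying $B_i$; equivalently, that $F_i$ sends the canonical biproduct diagram for $X\oplus Y$ to a biproduct diagram, the key point being that $F_i(i_1)F_i(p_1)+F_i(i_2)F_i(p_2)=\mathrm{id}$ precisely because $B_i$ kills the free complement $W$ in $(X\oplus Y)^{\otimes p}$. The paper handles this by an argument as in \cite{O}, Lemma 3.4; your ``additivity passes through finite direct sums'' needs this check to be legitimate. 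Your treatment of part (ii) is fine and agrees with the paper (a symmetric tensor functor is exact and intertwines the cyclic symmetry, hence commutes with $B_i$ and with $F$).
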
 

\begin{proof} (i) Since the functors $X\mapsto X^{\otimes p}$ and $B$ are monoidal (Proposition \ref{bmon}), so 
is their composition $F$. Also, by Proposition \ref{addg} the functors $F_i$ are additive on objects for all $1\le i\le p-1$. Their additivity on morphisms is proved similarly to \cite{O}, Lemma 3.4.  

(ii) This follows directly from the definition of $F$. 
\end{proof} 

\begin{definition} The functor $F$ is called the Frobenius functor of $\C$. 
\end{definition} 

\begin{corollary} (\cite{Be}, Subsection 8.11) We have 
$$
F_i(X\otimes Y)\cong \oplus_{j=1}^{p-1}\oplus_{s=1}^{{\rm min}(i,j,p-i,p-j)}F_j(X)\otimes F_{|i-j|+2s-1}(Y).
$$
\end{corollary}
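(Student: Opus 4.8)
The plan is to deduce this formula by unwinding the monoidal structure on $F$ established in Proposition \ref{addmon}(i) and matching it against the Verlinde fusion rules. First I would write out what the isomorphism $F(X\otimes Y)\cong F(X)\otimes F(Y)$ says explicitly in terms of the decompositions $F(X)=\oplus_i F_i(X)\otimes L_i$ and $F(Y)=\oplus_j F_j(Y)\otimes L_j$, working inside the category $\C^{(1)}\boxtimes\Ver_p$. On the left-hand side we get $\oplus_k F_k(X\otimes Y)\otimes L_k$, while on the right we get $\oplus_{i,j}F_i(X)\otimes F_j(Y)\otimes (L_i\otimes L_j)$.

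Next I would substitute the Verlinde fusion rule $L_i\otimes L_j\cong\bigoplus_{s=1}^{\min(i,j,p-i,p-j)}L_{|i-j|+2s-1}$ into the right-hand side. This reorganizes the sum over $i,j$ as a sum grouped by the index $k=|i-j|+2s-1$ of the simple object of $\Ver_p$ appearing. Since $\C^{(1)}\boxtimes\Ver_p$ is a Deligne product, an object is determined up to isomorphism by the multiplicity objects (in $\C^{(1)}$) attached to each simple $L_k$; comparing the coefficient of $L_k$ on both sides then yields precisely the claimed formula for $F_k(X\otimes Y)$. One should note that the indices $|i-j|+2s-1$ that arise all lie in $\{1,\dots,p-1\}$ (this is part of why the Verlinde fusion rules are stated the way they are), so there is no issue of "spillover" into a would-be $L_p$ term.

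The one genuine subtlety, and the step I expect to require the most care, is the \emph{functoriality and naturality} of the identifications: one must check that the monoidal structure isomorphism of $F$, restricted to the $L_k$-isotypic components, realizes the stated direct sum decomposition naturally in $X$ and $Y$, rather than just giving an abstract isomorphism of the underlying objects. This amounts to observing that the braiding/associativity data in $\C^{(1)}\boxtimes\Ver_p$ act block-diagonally with respect to the $\Ver_p$-grading and that the Verlinde fusion isomorphisms are canonical once the simple objects $L_i$ are fixed; granting this, the coefficient-matching is forced. Finally, to keep the statement self-contained, I would remark that both sides are additive in each of $X$ and $Y$ (by Proposition \ref{addg}), so it suffices to verify the isomorphism at the level of these multiplicity functors, which is exactly what the preceding paragraph accomplishes.
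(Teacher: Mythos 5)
Your proposal is correct and is precisely the paper's argument, just written out in detail: the paper's proof is the one-line observation that the formula follows from the monoidality of $F$ (Proposition \ref{addmon}) together with the Verlinde fusion rules, which is exactly your coefficient-matching of the $L_k$-isotypic components in $\C^{(1)}\boxtimes\Ver_p$.
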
 

\begin{proof} This follows from Proposition \ref{addmon} and the fusion rules of ${\rm Ver}_p$. 
\end{proof} 

\begin{remark} The functor $F$ appears in \cite{Co}, Section 4 under the name 
{\it external Frobenius twist}, denoted $\Fr$ (see \cite{Co}, Proposition 4.3.3). The functor $F_1$ appears there under the name {\it internal Frobenius twist}, denoted $\Fr_{\rm in}$.  
\end{remark} 

\begin{example} Let $p=2$. Then the Frobenius functor 
$F:\A\to \A^{(1)}$ is given by $F(X)=B(X\otimes X)$, i.e., it is the cohomology of $1-c$ on $X^{\otimes 2}$. 
\end{example} 

\begin{example}\label{FofLi}(\cite{O}) Let $\C={\rm Ver}_p$. Then we have 
$F(L_i)=L_1\boxtimes L_i$ if $i$ is odd and $F(L_i)=L_{p-1}\boxtimes L_{p-i}$ 
if $i$ is even; thus, $F_i=0$ for even $i$. 
By Proposition \ref{addmon}(ii), this implies that 
in any symmetric tensor category $\C$ having a 
symmetric tensor functor $\C\to {\rm Ver}_p$
we have $F_i=0$ for even $i$; e.g., for $p=3$ 
we have $F(X)=F_1(X)\boxtimes \be$. This gives 
a positive answer to Question 4.2.7(ii) of \cite{Co} 
for such categories. 
\end{example} 

\begin{remark} If the tensor product in $\A$ is only right exact, then the Frobenius functor does not have to be monoidal. For example, let ${\rm char}(\k)=2$ and consider the $\Bbb Z$-graded algebra $R=\k[x,y]/(x,y)^m$, where $m$ is sufficiently large. Let $\A$ be the category of $\Bbb Z$-graded $R$-modules, with tensor product over $R$ and $\deg(x)=\deg(y)=1$.  
Let $X\in \A$ be the maximal ideal of $R$ and $Y=\k[x]/x^m$. We are going to show 
that $F(X\otimes Y)_3=\k$, while $(F(X)\otimes F(Y))_3=\k^2$ (where the subscript denotes the homogeneity degree). Since the homogeneous components of  
$F(X\otimes Y)$ and $F(X)\otimes F(Y)$ of any fixed degree $n$ stabilize when $m\to \infty$, we may do the computation for $m=\infty$. 
Then we have $F(X)=(x^2,y^2)$ (the ideal generated by $x^2$ and $y^2$) and $F(Y)=Y=\k[x]$. Also $X\otimes Y=\k[1]\oplus \k[x][1]$, where $[1]$ denotes the degree shift. 
Thus, $F(X\otimes Y)=\k[2]\oplus \k[x][2]$, while $F(X)\otimes F(Y)=\k[x]/x^2[2]\oplus \k[x][2]$. 
Hence $F(X\otimes Y)_3=\k$, while $(F(X)\otimes F(Y))_3=\k^2$ for $m\gg 0$, which gives a desired counterexample. 

Furthermore, if we want $\A$ to be a finite category, we may replace a $\Bbb Z$-grading with a 
$\Bbb Z/N$-grading for large $N$. 
\end{remark} 

\section{Properties of the Frobenius functor}

Let $\C$ be a symmetric tensor category over $\k$ with finitely many simple objects. 

\begin{proposition} \label{subadd} The functional $X\mapsto {\rm FPdim}(F(X))$, $X\in \C$ is subadditive. 
\end{proposition}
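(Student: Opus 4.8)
The plan is to deduce this from the subadditivity of the Frobenius--Perron dimensions of the auxiliary functionals $h_\bold a$ established in Corollary \ref{co2}, applied to a concrete choice of the weight sequence $\bold a$. Recall that $F(X)=\oplus_{i=1}^{p-1}F_i(X)\otimes L_i$ inside $\C^{(1)}\boxtimes \Ver_p$, so by multiplicativity of FPdim on Deligne products and by Lemma \ref{char:Ver},
$$
{\rm FPdim}(F(X))=\sum_{i=1}^{p-1}{\rm FPdim}(F_i(X))\cdot\frac{\sin(\pi i/p)}{\sin(\pi/p)}.
$$
Thus ${\rm FPdim}(F(X))=h_\bold a(X)$ precisely for the sequence $a_j=\sin(\pi j/p)/\sin(\pi/p)$ considered in Example \ref{verl}.

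First I would check that this $\bold a$ satisfies the hypotheses of Corollary \ref{co2}: the symmetry $a_i=a_{p-i}$ is immediate from $\sin(\pi(p-i)/p)=\sin(\pi i/p)$, the positivity $a_j>0$ holds for $1\le j\le p-1$ since $0<\pi j/p<\pi$, and concavity of the sequence follows because $\sin$ is concave on $[0,\pi]$: the discrete second differences $a_{m+1}-2a_m+a_{m-1}$ are nonpositive, equivalently $a_{m+1}-a_m$ is decreasing in $m=0,\dots,p-1$ with the convention $a_0=a_p=0$. Then Corollary \ref{co2} directly gives that $h_\bold a$ is subadditive, i.e. for any short exact sequence $0\to X\to Y\to Z\to 0$ in $\C$ we have ${\rm FPdim}(F(Y))\le {\rm FPdim}(F(X))+{\rm FPdim}(F(Z))$, which is exactly the assertion.

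There is essentially no hard step here; the only thing requiring a word of justification is that the functional in the statement really equals $h_\bold a$ for the Verlinde weights, which is where Lemma \ref{char:Ver} and the multiplicativity of FPdim on $\boxtimes$ are used. If one prefers to avoid invoking Corollary \ref{co2} as a black box, the same conclusion follows slightly more directly: by Proposition \ref{longex} each $G_i$ is exact in the middle, so the composition series of $G_i(Y)$ is dominated by that of $G_i(X)\oplus G_i(Z)$, hence ${\rm FPdim}(G_i(Y))\le {\rm FPdim}(G_i(X))+{\rm FPdim}(G_i(Z))$; one then writes ${\rm FPdim}(F(X))$ as the nonnegative combination $\sum_{1\le i\le p/2}x_i\,{\rm FPdim}(G_i(X))$ with $x_i=\tan(\pi/2p)\sin(\pi i/p)\ge 0$ from Example \ref{verl} (using Lemma \ref{le1}(ii) to relate $G_i$ to the $F_j$) and sums the inequalities. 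The main "obstacle" is thus purely bookkeeping: matching the Verlinde FPdim weights to the concave-symmetric sequence $\bold a$ and confirming the positivity of the coefficients $x_i$.
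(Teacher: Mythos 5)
Your proof is correct and follows essentially the same route as the paper: identify ${\rm FPdim}(F(X))$ with $h_{\bold a}(X)$ for the Verlinde weights $a_j=\sin(\pi j/p)/\sin(\pi/p)$ and apply Corollary \ref{co2} together with Example \ref{verl}. The extra verifications you spell out (symmetry, positivity, concavity of $\bold a$, and the FPdim computation via Lemma \ref{char:Ver}) are exactly what the paper's citation of Example \ref{verl} and Lemma \ref{char:Ver} encapsulates.
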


\begin{proof} We have ${\rm FPdim}(F(X))=\sum_i a_i{\rm FPdim}(F_i(X))$, where $a_j=\frac{\sin (\pi j/p)}{\sin (\pi/p)}$. 
Thus, the result follows from Corollary \ref{co2} and Example \ref{verl}.
\end{proof} 

\begin{proposition}\label{subadd1} The functional $X\mapsto {\rm FPdim}(F(X))$ is additive on short exact sequences 
(i.e., descends to a character of the Grothendieck ring ${\rm Gr}(\C)$) if and only if the functors $G_i$ are exact for all $i$. 
\end{proposition}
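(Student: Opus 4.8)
The plan is to prove the two implications separately, using the $6$-periodic long exact sequence of Proposition~\ref{longex} together with the subadditivity statement of Proposition~\ref{subadd} and its refinement in Corollary~\ref{co1}. The key numerical bookkeeping device is that, by Lemma~\ref{le1}(ii) and Example~\ref{verl}, the Frobenius--Perron dimension of $F(X)$ can be written simultaneously as $\sum_i a_i\,{\rm FPdim}(F_i(X))$ with $a_j=\sin(\pi j/p)/\sin(\pi/p)$ and as $\sum_i x_i\,h_i(X)$ with $x_i=\tan(\tfrac{\pi}{2p})\sin(\tfrac{\pi i}{p})>0$, where $h_i(X)={\rm FPdim}(G_i(X))$. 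So ``${\rm FPdim}\circ F$ is additive'' is a statement about a strictly positive linear combination of the functionals $h_i$.

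For the easy direction: suppose every $G_i$ is exact. Then for a short exact sequence $0\to X\to Y\to Z\to 0$ each sequence $0\to G_i(X)\to G_i(Y)\to G_i(Z)\to 0$ is exact, so $h_i(Y)=h_i(X)+h_i(Z)$ for all $i$ (additivity of ${\rm FPdim}$ on exact sequences in $\C^{(1)}$), and hence ${\rm FPdim}(F(Y))=\sum_i x_i h_i(Y)=\sum_i x_i(h_i(X)+h_i(Z))={\rm FPdim}(F(X))+{\rm FPdim}(F(Z))$. Thus the functional descends to a character of ${\rm Gr}(\C)$.

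For the converse, assume ${\rm FPdim}\circ F$ is additive on all short exact sequences. Fix a short exact sequence and consider the $6$-periodic long exact sequence of Proposition~\ref{longex}. ``Exactness in the middle'' of $X\mapsto G_i(X)$ gives, for every $i$, that the composition factors of $G_i(Y)$ are dominated by those of $G_i(X)\oplus G_i(Z)$; in particular $h_i(Y)\le h_i(X)+h_i(Z)$, with equality if and only if $0\to G_i(X)\to G_i(Y)\to G_i(Z)\to 0$ is exact (here one uses that ${\rm FPdim}$ is strictly positive on nonzero objects, and that the map $G_i(X)\to G_i(Y)$ in the long exact sequence has image equal to the kernel of $G_i(Y)\to G_i(Z)$). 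Summing against the strictly positive weights $x_i$ gives ${\rm FPdim}(F(Y))\le {\rm FPdim}(F(X))+{\rm FPdim}(F(Z))$, and the hypothesis forces equality; since all $x_i>0$, equality must hold in each summand, i.e.\ $h_i(Y)=h_i(X)+h_i(Z)$ for every $i$, and therefore $0\to G_i(X)\to G_i(Y)\to G_i(Z)\to 0$ is exact for every $i$. As the short exact sequence was arbitrary, each $G_i$ is exact.

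The one point that needs a little care — and is the main (mild) obstacle — is justifying that $h_i(Y)=h_i(X)+h_i(Z)$ already implies exactness of the middle sequence, not merely equality of lengths. This is where the $6$-periodic structure is essential: the long exact sequence tells us $G_i(X)\to G_i(Y)\to G_i(Z)$ is exact in the middle term, so $G_i(Y)$ surjects onto $\ker(G_i(Y)\to G_i(Z))/\operatorname{im}$-irrelevant data; combined with the dimension count (matching Frobenius--Perron dimensions, which are additive and strictly positive on $\C^{(1)}$) the connecting maps $G_i(Z)\to G_{p-i}(X)$ must vanish and $G_i(X)\to G_i(Y)$ must be injective, giving a genuine short exact sequence. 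Everything else is the linear-algebra identity $\sum_i x_i h_i = {\rm FPdim}\circ F$ from Example~\ref{verl} and the additivity/positivity of ${\rm FPdim}$.
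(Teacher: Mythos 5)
Your proposal is correct and takes essentially the same route as the paper: write ${\rm FPdim}(F(X))=\sum_i x_i\,{\rm FPdim}(G_i(X))$ with $x_i>0$ (Example \ref{verl}), use subadditivity of each ${\rm FPdim}(G_i(-))$ (Corollary \ref{co1}) to force termwise additivity, and then combine additivity with exactness in the middle (Proposition \ref{longex}) to get genuine short exact sequences $0\to G_i(X)\to G_i(Y)\to G_i(Z)\to 0$. The only cosmetic difference is that the paper's weighted sum runs over $i\le p/2$ only, with the case $i>p/2$ handled via the equality ${\rm FPdim}(G_i(X))={\rm FPdim}(G_{p-i}(X))$ coming from Lemma \ref{le1}(ii), a step your argument absorbs implicitly.
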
 

\begin{proof}    Recall that by the proof of Corollary \ref{co2} we have
$$
{\rm FPdim}(F(X))=\sum_{i=1}^{[p/2]} x_i{\rm FPdim}(G_i(X)),
$$
where $x_i>0$ by Example \ref{verl}. Thus the ``if" direction is clear.
To prove the ``only if" direction, observe that the functionals $X\mapsto {\rm FPdim}(G_i(X))$ are additive on short exact sequences for all $i$ (since they are already known to be subadditive from Corollary \ref{co1}). Since $G_i$ is exact in the middle by Proposition \ref{longex}, this implies that $G_i$ is exact for $i\le p/2$.
If $i>p/2$, the argument is the same, replacing $G_i$ with $G_{p-i}$.   

\begin{comment}
The ``if" direction is clear. To prove the ``only if" direction, note that 
$$
{\rm FPdim}(F(X))=\sum_{i=1}^{[p/2]} x_i{\rm FPdim}(G_i(X)),
$$
where $x_i>0$. This implies that the functionals $X\mapsto {\rm FPdim}(G_i(X))$ are additive on short exact sequences for all $i$ (since they are already known to be subadditive from Corollary \ref{co1}). Since $G_i$ is exact in the middle by Proposition \ref{longex}, this implies that $G_i$ is exact for $i\le p/2$.
If $i>p/2$, the argument is the same, replacing $G_i$ with $G_{p-i}$.   
\end{comment}
\end{proof} 

\begin{proposition}\label{lesseq} For any $X\in \C$ one has 
$$
{\rm FPdim}(F(X))\le {\rm FPdim}(X).
$$
\end{proposition}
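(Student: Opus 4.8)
The plan is to reduce the inequality $\mathrm{FPdim}(F(X)) \le \mathrm{FPdim}(X)$ to the special case of simple objects by using subadditivity, and then to establish the simple case by a direct computation inside $X^{\otimes p}$. First I would recall from Proposition \ref{subadd} that $X \mapsto \mathrm{FPdim}(F(X))$ is subadditive, so that for any object $X$ with composition factors $S_1, \dots, S_\ell$ (with multiplicity) we have $\mathrm{FPdim}(F(X)) \le \sum_k \mathrm{FPdim}(F(S_k))$. Since $\mathrm{FPdim}$ is additive on short exact sequences, $\mathrm{FPdim}(X) = \sum_k \mathrm{FPdim}(S_k)$. Hence it suffices to prove $\mathrm{FPdim}(F(S)) \le \mathrm{FPdim}(S)$ for every simple object $S$.

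For a simple object $S$, the key is that $S$ has a well-defined semisimplification, and more importantly that $F$ commutes with symmetric tensor functors and with semisimplification in a controlled way. The cleanest route: consider the semisimplification $\overline{\C}$ and the image $\bar S = \mathbf{S}(S)$, a simple object of the fusion category $\overline{\C}$; here $\mathrm{FPdim}(\bar S) \le \mathrm{FPdim}(S)$ since passing to a subquotient (the semisimplification kills the radical) does not increase Frobenius--Perron dimension, and in the semisimple case one has the equality $\mathrm{FPdim}(F(\bar S)) = \mathrm{FPdim}(\bar S)$ established in \cite{O} (the Frobenius functor on a fusion category is a fiber-functor-like construction preserving $\mathrm{FPdim}$, coming from the fact that $\mathbf{S}$ and $F$ both behave multiplicatively). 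The subtlety is relating $F(S)$ to $F(\bar S)$: one computes $F(S) = B(S^{\otimes p})$, and the composition factors of $S^{\otimes p}$ as a $\k[D]/D^p$-module in $\C$ are governed, after semisimplification, by $\bar S^{\otimes p}$ in $\overline{\C}$. Using that $B = \mathbf{S}_\A$ (Example \ref{natfunc}) and that $\mathbf{S}$ only decreases $\mathrm{FPdim}$, one gets $\mathrm{FPdim}(F(S)) = \mathrm{FPdim}(B(S^{\otimes p})) \le \mathrm{FPdim}(\mathbf{S}(S^{\otimes p}))$ restricted to the relevant isotypic parts, and then compares with the semisimple computation.

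An alternative, perhaps more self-contained approach avoids semisimplification: directly estimate $\mathrm{FPdim}(F_i(X))$ using the structure of $X^{\otimes p}$ as a $\k[D]/D^p$-module. One writes $\mathrm{FPdim}(F(X)) = \sum_i a_i \mathrm{FPdim}(F_i(X))$ with $a_i = \sin(\pi i/p)/\sin(\pi/p)$, and $F_i(X) = B_i(X^{\otimes p})$ is a subquotient of $X^{\otimes p}$. The point is that the $B_i(X^{\otimes p})$ account, with the weights $a_i$, for exactly the ``non-free'' part of $X^{\otimes p}$ as a $\k[D]/D^p$-module, and one needs the sharp bound $\sum_i a_i \mathrm{FPdim}(B_i(X^{\otimes p})) \le \mathrm{FPdim}(X)$. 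For simple $X$ this should follow from the fact that $\mathrm{FPdim}(X^{\otimes p}) = \mathrm{FPdim}(X)^p$ and that a free $\k[D]/D^p$-module of rank $r$ contributes $pr$ to the dimension but $0$ to all $B_i$; combined with the character-theoretic identity (valid in $\Rep(\Z/p)$ after semisimplification) that the ``Frobenius-twist part'' has $\mathrm{FPdim}$ at most $\mathrm{FPdim}(X)$, this gives the claim.

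The main obstacle I expect is the comparison step: showing that in the non-semisimple case the possible appearance of a large free $\k[D]/D^p$-summand inside $X^{\otimes p}$ can only help (i.e., strictly decreases $\mathrm{FPdim}(F(X))$ below $\mathrm{FPdim}(X)$), and that no ``cross terms'' in the filtration of $X^{\otimes p}$ by powers of $D$ can inflate $\sum_i a_i \mathrm{FPdim}(B_i)$ beyond $\mathrm{FPdim}(X)$. Concretely one must control $\mathrm{FPdim}$ of the subquotients $\mathrm{Ker}\,D \cap \mathrm{Im}\,D^{i-1}$ inside $X^{\otimes p}$; the cleanest handle is the identity $\mathrm{FPdim}(\mathbf{S}(X^{\otimes p})) \le \mathrm{FPdim}(X^{\otimes p}) = \mathrm{FPdim}(X)^p$ together with the exact computation of $F$ in the fusion category $\Rep(\Z/p)^{\mathrm{ss}} = \mathrm{Ver}_p$ from Example \ref{FofLi} and \cite{O}, reducing everything to the numerical inequality among the $a_i$ and the Verlinde fusion rules. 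I would lean on subadditivity (Proposition \ref{subadd}) to do the dévissage to simples first, since that is the cheapest reduction and isolates the genuinely new input to the semisimple statement already available from \cite{O}.
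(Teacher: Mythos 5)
Your opening reduction is fine: by Proposition \ref{subadd} and induction on length, ${\rm FPdim}(F(X))\le\sum_k{\rm FPdim}(F(S_k))$, so the statement would follow from the case of simple objects. But the proposal never proves the simple case, and that case is not easier than the general one. Your semisimplification route rests on two claims that are neither in the paper nor obviously true: first, that ${\rm FPdim}_{\overline{\C}}(\mathbf{S}(S))\le{\rm FPdim}_{\C}(S)$ --- but $\overline{\C}$ is a different category whose Frobenius--Perron dimensions are computed in a different Grothendieck ring, and $\mathbf{S}$ is only additive monoidal (not exact), so no such comparison is available; second, that ``$\mathbf{S}$ only decreases ${\rm FPdim}$,'' which, even if granted and applied to $B=\mathbf{S}_{\A}$ and $M=S^{\otimes p}$, would only give ${\rm FPdim}(F(S))\le{\rm FPdim}(S^{\otimes p})={\rm FPdim}(S)^p$, far weaker than what is needed. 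Your ``more self-contained'' alternative essentially restates the goal: observing that free $\k[D]/D^p$-summands of $S^{\otimes p}$ die under the $B_i$ says nothing about the non-free part, and in the non-semisimple situation $S^{\otimes p}$ need not split into a free module plus something governed by the ${\rm Ver}_p$ computation of \cite{O}; the appeal to a ``character-theoretic identity'' at exactly this point is the missing proof, not a proof.

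The paper's argument sidesteps the simple case entirely by a tensor-power (limiting) trick, and this is the idea your outline lacks. By monoidality, ${\rm FPdim}(F(X))^N={\rm FPdim}(F(X^{\otimes N}))=\sum_j x_j\,{\rm FPdim}(G_j(X^{\otimes N}))$; since each $G_j$ is exact in the middle (Proposition \ref{longex}), ${\rm FPdim}\circ G_j$ is subadditive, so writing $[X^{\otimes N}]=\sum_i c_i(N)[X_i]$ over the finitely many simples gives ${\rm FPdim}(F(X))^N\le\sum_i c_i(N)D_i\le D_{\rm max}\,{\rm FPdim}(X)^N$, where $D_i:={\rm FPdim}(F(X_i))$ and $D_{\rm max}=\max_i D_i$ is a constant independent of $N$ (here ${\rm FPdim}(X)^N=\sum_i c_i(N)d_i\ge\sum_i c_i(N)$ since $d_i\ge 1$). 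Taking $N$-th roots and letting $N\to\infty$ yields ${\rm FPdim}(F(X))\le{\rm FPdim}(X)$. The crucial point is that one never needs $D_i\le d_i$ for the simples --- only that the finitely many $D_i$ are bounded --- which is precisely the step your reduction cannot supply. To salvage your outline you would need an honest proof of the inequality for simple objects, and neither the semisimple equality from \cite{O} nor the vanishing of the $B_i$ on free modules suffices for that.
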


\begin{proof}
Let $X_i$, $i\in [1,n]$ be the simple objects of $\C$. In the Grothendieck group ${\rm Gr}(\C)$ we have 
$$
X^{\otimes N}=\sum_{i=1}^n c_i(N)X_i,
$$
where $c_i(N)\in \Bbb Z_{\ge 0}$. 
Thus 
$$
{\rm FPdim}(X)^N=\sum_{i=1}^n c_i(N)d_i\ge \sum_{i=1}^n c_i(N), 
$$
where $d_i={\rm FPdim}(X_i)$. We also have 
$$
{\rm FPdim}(F(X))^N={\rm FPdim}(F(X)^{\otimes N})={\rm FPdim}(F(X^{\otimes N}))=
$$
$$
=\sum_{j=1}^{[p/2]}x_j{\rm FPdim}(G_j(X^{\otimes N})). 
$$
Since the functors $G_i$ are exact in the middle, this implies that 
\begin{equation}\label{ineq}
{\rm FPdim}(F(X))^N\le  \sum_{i=1}^n c_i(N)D_i, 
\end{equation}
where $D_i:={\rm FPdim}(F(X_i))=\sum_{j=1}^{[p/2]}x_j{\rm FPdim}(G_j(X_i))$. 
Let $D_{\rm max}=\max_i D_i$. Then we get 
$$
{\rm FPdim}(F(X))^N\le D_{\rm max}{\rm FPdim}(X)^N
$$
for all $N\ge 0$, which implies that ${\rm FPdim}(F(X))\le {\rm FPdim}(X)$. 
\end{proof} 

\begin{theorem}\label{equiva} 
The following conditions on $\C$ are equivalent: 

(i) $F$ maps injections to injections; 

(ii) $F$ maps surjections to surjections; 

(iii) $F$ is exact (i.e., a tensor functor); 

(iv) ${\rm FPdim}(F(X))={\rm FPdim}(X)$ for all $X\in \C$;

(v) The functional $X\mapsto {\rm FPdim}(F(X))$ is additive on short exact sequences; 

(vi) $G_i$ are exact for all $i$;

(vii) $F_1$ maps injections to injections.
\end{theorem}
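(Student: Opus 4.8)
The plan is to prove the cycle of implications
$(\text{iii})\Rightarrow(\text{i})\Rightarrow(\text{vii})\Rightarrow(\text{iv})\Rightarrow(\text{v})\Rightarrow(\text{vi})\Rightarrow(\text{iii})$, and to handle $(\text{iii})\Leftrightarrow(\text{ii})$ separately by duality. The implication $(\text{iii})\Rightarrow(\text{i})$ is trivial, and $(\text{i})\Rightarrow(\text{vii})$ is also immediate since $F_1$ is a direct summand (the $L_1=\be$-isotypic part) of $F$, so if $F$ preserves injections then so does $F_1$; conversely any injection preserved by $F_1$ tells us nothing about the other $F_i$, which is why $(\text{vii})$ is the \emph{weakest} hypothesis and must be bootstrapped. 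The implications $(\text{v})\Rightarrow(\text{vi})$ and $(\text{vi})\Rightarrow(\text{iii})$ are already essentially contained in the excerpt: $(\text{vi})\Rightarrow(\text{iii})$ holds because $F(X)=B(X^{\otimes p})=\bigoplus_i F_i(X)\otimes L_i$ and by Lemma~\ref{le1}(ii) each $F_i$ is a subquotient functor controlled by the $G_i$'s, so exactness of all $G_i$ forces exactness of each $F_i$ hence of $F$; and $(\text{v})\Rightarrow(\text{vi})$ is exactly Proposition~\ref{subadd1}. The implication $(\text{iii})\Rightarrow(\text{iv})$ is clear since an exact monoidal functor between categories with finitely many simples preserves $\FPdim$, and $(\text{iv})\Rightarrow(\text{v})$ is trivial (a functional that is constant-on-objects-compatible with $\otimes$ and takes the stated value is additive because $\FPdim$ is).

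The real content, and the step I expect to be the main obstacle, is $(\text{vii})\Rightarrow(\text{iv})$: upgrading the seemingly tiny hypothesis ``$F_1$ preserves injections'' to the full numerical statement $\FPdim(F(X))=\FPdim(X)$. Here is how I would attack it. Given a short exact sequence $0\to X\to Y\to Z\to 0$, I want to show the six-term sequence of Proposition~\ref{longex} degenerates. First, $F_1$ preserving injections means the connecting maps $G_i(Z)\to G_{p-i}(X)$ landing in the $F_1$-part vanish; more precisely I would use the explicit filtration of Lemma~\ref{le1}(ii), which exhibits $B_1=F_1$ as the \emph{bottom} layer of every $G_i$ (the multiplicity $v_{ij}$ has $j=1$ occurring, and the filtration is ordered so $B_1$ sits at the bottom of the block $B_{n-i},\dots,B_1$). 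So injectivity of $F_1$ on the sequence controls the bottom of each $G_i$. The key trick is then to apply the hypothesis not just to the original sequence but to its tensor powers: since $F$ is monoidal, $F_1(X^{\otimes N}) = \bigoplus (\text{Verlinde-multiplicities})\, F_{\bullet}(X)^{\otimes \bullet}$, and running $N\to\infty$ together with the Frobenius–Perron eigenvector argument of Proposition~\ref{lesseq} should force all the numerical defects to vanish simultaneously. Concretely, I would show that $F_1$ preserving injections implies $\FPdim F_1(Y)=\FPdim F_1(X)+\FPdim F_1(Z)$ on the original sequence, then bootstrap via $\otimes$-powers and the same $D_{\max}$-style estimate as in Proposition~\ref{lesseq} to get $\FPdim(F(Y))=\FPdim(F(X))+\FPdim(F(Z))$, i.e.\ $(\text{v})$, and then close the loop — so in fact I would reorganize as $(\text{vii})\Rightarrow(\text{v})$.

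For $(\text{vii})\Rightarrow(\text{v})$ in detail: using Example~\ref{natfunc} we have $F_1(X)=B_1(X^{\otimes p})$, and the subadditivity of $\FPdim\circ F_1$ follows from Corollary~\ref{co1}/Corollary~\ref{co2} applied with the concave symmetric sequence concentrated appropriately (or directly from the fact that $B_1$ is exact in the middle, which is the $i$-independent part of Proposition~\ref{longex}). Now $F_1$ preserving injections gives left-exactness of $F_1$; combined with exactness-in-the-middle this gives that $0\to F_1(X)\to F_1(Y)\to F_1(Z)$ is exact, whence $\FPdim F_1(Y)\le \FPdim F_1(X)+\FPdim F_1(Z)$ with the failure measured by the cokernel of $F_1(Y)\to F_1(Z)$. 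Applying this with $X,Y,Z$ replaced by $X^{\otimes N},Y^{\otimes N},Z^{\otimes N}$ — no, better: apply it after tensoring the exact sequence on the right by $X_i^{\otimes N}$ for each simple and summing, then use monoidality of $F$ and the Verlinde fusion rules to re-express $\FPdim F_1$ of a product in terms of the $\FPdim F_j$'s of the factors. The resulting inequalities, combined with Proposition~\ref{lesseq} and the strict positivity of the $x_j$ from Example~\ref{verl}, pin down $\FPdim(F(Y))=\FPdim(F(X))+\FPdim(F(Z))$.

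Finally, for $(\text{iii})\Leftrightarrow(\text{ii})$: since $\C$ is rigid and $F$ is monoidal, $F$ commutes with duality, so $F$ preserves surjections iff it preserves injections (dualize), and we already have $(\text{i})\Leftrightarrow(\text{iii})$ from the cycle. This closes all equivalences. The one genuinely delicate point, to be emphasized in the writeup, is the bootstrapping in $(\text{vii})\Rightarrow(\text{v})$: a priori $F_1$ being left exact says nothing about $F_2,\dots,F_{p-1}$, and the only leverage is monoidality plus the Frobenius–Perron positivity, so the argument must be genuinely multiplicative rather than a direct diagram chase.
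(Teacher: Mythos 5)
Your outer scaffolding (duality for the equivalence with (ii), Proposition \ref{subadd1} for (v)$\Leftrightarrow$(vi), the easy implications out of (iii)) matches the paper, but the two steps you rely on to close the cycle both have genuine gaps. First, (vi)$\Rightarrow$(iii) is \emph{not} ``essentially contained'' in Lemma \ref{le1}(ii): knowing that each $G_i$ is exact and that $G_i$ is filtered with graded pieces the $F_j$'s does not make any $F_j$ exact --- graded pieces of an exact filtered functor need not be exact, and $F_i$ (that is, $B_i(X^{\otimes p})$) is not known to be exact in the middle; Proposition \ref{longex} gives middle-exactness only for the $G_i$, so your later appeal to ``$B_1$ is exact in the middle, the $i$-independent part of Proposition \ref{longex}'' is unsupported. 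The paper closes this step with Proposition \ref{spli}: exactness of the $G_i$-sequences, after forgetting the coalgebra coaction, forces $0\to X^{\otimes p}\to Y^{\otimes p}\to Y^{\otimes p}/X^{\otimes p}\to 0$ to split as a sequence of $\k[D]/D^p$-modules (the Littlewood--Richardson/Hall-algebra splitting criterion), and only then does exactness of each $F_i$-sequence, i.e.\ of $F$, follow. Without some such splitting input this implication does not go through.

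Second, your main effort, (vii)$\Rightarrow$(iv)/(v) by tensor-power bootstrapping, is both shaky and unnecessary. It rests on ``$F_1$ preserves injections, hence is left exact'' plus middle-exactness of $F_1$, neither of which is available, and the asymptotic Frobenius--Perron estimate is only sketched. The paper's route is a one-line categorical upgrade (vii)$\Rightarrow$(i): monoidality of $F$ and the Verlinde fusion rules give $F_1(X\otimes T)\cong\bigoplus_{i=1}^{p-1}F_i(X)\otimes F_i(T)$, so for an injection $f$ the map $F_i(f)\otimes\mathrm{Id}_{F_i(T)}$ is a direct summand of $F_1(f\otimes \mathrm{Id}_T)$ and hence injective for every $i$; thus (vii) yields (i). The Frobenius--Perron input is then used only in (i)$\Rightarrow$(iii), and only through subadditivity (Proposition \ref{subadd}): for a short exact sequence the complex $0\to F(X)\to F(Y)\to F(Z)\to 0$ is exact at the ends, and its middle cohomology $H$ must have $\FPdim(H)=0$, hence $H=0$. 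You were circling the right identity when you invoked the fusion rules ``at the level of $\FPdim$,'' but it is the direct-summand statement at the level of morphisms, not a numerical estimate, that makes hypothesis (vii) strong enough.
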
 

\begin{proof}
(i) and (ii) are equivalent by taking duals (since $F$ is monoidal). Also it is clear that (iii) implies (i),(ii) and (vii). 

To show that (i) implies (iii), note that 
if (i),(ii) hold and if 
$$
0\to X\to Y\to Z\to 0
$$
is a short exact sequence, then the sequence 
$$
0\to F(X)\to F(Y)\to F(Z)\to 0
$$
is exact in the first and third term. Denote the cohomology of this sequence in the middle term by $H$. 
Then the composition series of $F(Y)$ is obtained by combining the composition series of $F(X),F(Z)$ and $H$. But by Proposition \ref{subadd}, 
$$
{\rm FPdim}(F(Y))\le {\rm FPdim}(F(X))+{\rm FPdim}(F(Z)).
$$ 
Thus 
${\rm FPdim}(H)=0$, hence $H=0$, as desired. 

To show that (iii) implies (iv), it is enough to note that any tensor functor preserves 
the Frobenius-Perron dimension. 

It is also clear that (iv) implies (v). 

The equivalence of (v) and (vi) is established in Proposition \ref{subadd1}. 

Let us now show that (vi) implies (iii). 
Suppose that $G_i$ are exact for all $i$. Let us realize $\C$ as the category of finite dimensional comodules 
over some coalgebra $C$. 
Consider the short exact sequence 
$$
0\to X^{\otimes p}\to Y^{\otimes p}\to Y^{\otimes p}/X^{\otimes p}\to 0
$$
of $C\otimes (\k\Bbb Z/p)^*$-comodules. Applying the functor $E_i$ 
to this sequence, we get the sequence
$$
0\to G_i(X)\to G_i(Y)\to G_i(Z)\to 0,
$$
which is exact. Forgetting the $C$-coaction 
and using Proposition \ref{spli}, we get that the sequence 
$$
0\to X^{\otimes p}\to Y^{\otimes p}\to Y^{\otimes p}/X^{\otimes p}\to 0
$$
is split as a sequence $\k\Bbb Z/p$-modules. Therefore, the sequence 
$$
0\to B_i(X^{\otimes p})\to B_i(Y^{\otimes p})\to B_i(Y^{\otimes p}/X^{\otimes p})\to 0
$$
splits as a sequence of vector spaces and therefore is exact. But this sequence is nothing but the sequence 
$$
0\to F_i(X)\to F_i(Y)\to F_i(Z)\to 0.
$$
Hence this sequence is exact (although not necessarily split in $\C$, since now we restore the coaction of $C$).
This implies (iii).  

Finally, to show that (vii) implies (i), recall that 
$$
F_1(X\otimes T)=\oplus_{i=1}^{p-1}F_i(X)\otimes F_i(T). 
$$
So if $f: X\to Y$ is an injection then $F_i(f)$ is also an injection for all $i$, since $F_i(f)\otimes {\rm Id}_{F_i(T)}$ 
is a direct summand in $F_1(f\otimes {\rm Id}_T)$, which by assumption is an injection. 
\end{proof} 

\section{Frobenius exact categories}

\begin{definition} We say that a tensor category $\C$ is Frobenius exact if 
the Frobenius functor of $\C$ is exact. 
\end{definition} 

Note that if $\C$ has finitely many simple objects (e.g., if it is finite) then it is Frobenius exact if and only if it satisfies any of the properties (i)-(vii) of 
Theorem \ref{equiva}. 

\begin{remark} By \cite{Co}, Lemma 4.3.2, a symmetric tensor category $\C$ is Frobenius exact if and only if 
it is locally semisimple in the sense of \cite{Co} (i.e., satisfies the equivalent conditions of \cite{Co}, Theorem C). 
\end{remark} 

\begin{remark} There exist finite symmetric tensor categories which are not Frobenius exact. Namely, in characteristic $2$ 
examples are provided by the categories $\C_i$, $i\ge 1$, studied in \cite{BE}. Moreover, in the forthcoming paper \cite{BEO} examples of such categories will be constructed in any positive characteristic. 
\end{remark} 

Let $\C$ be a symmetric tensor category over $\k$ with finitely many simple objects, and let $\C_{\rm ex}\subset \C$ be the full subcategory of objects 
$X$ for which ${\rm FPdim}(F(X))={\rm FPdim}(X)$. 

\begin{proposition}\label{frobex}
The category $\C_{\rm ex}$ is a tensor subcategory of $\C$. It is Frobenius exact, and 
any Frobenius exact tensor subcategory of $\C$ is a tensor subcategory of $\C_{\rm ex}$. 
\end{proposition}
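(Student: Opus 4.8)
The plan is to run everything through the two global estimates already in hand: subadditivity of $X\mapsto {\rm FPdim}(F(X))$ (Proposition \ref{subadd}) and the inequality ${\rm FPdim}(F(X))\le {\rm FPdim}(X)$ (Proposition \ref{lesseq}), together with additivity of ${\rm FPdim}$ on short exact sequences and its multiplicativity under $\otimes$. I will not analyze the categorical operations directly on the (inexact) functor $F$; instead I will only track the numerical defining condition of $\C_{\rm ex}$, namely the equality ${\rm FPdim}(F(X))={\rm FPdim}(X)$.

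\emph{Closure under subquotients.} Given a short exact sequence $0\to X\to Y\to Z\to 0$ with $Y\in\C_{\rm ex}$, I would chain the inequalities
$$
{\rm FPdim}(Y)={\rm FPdim}(F(Y))\le {\rm FPdim}(F(X))+{\rm FPdim}(F(Z))\le {\rm FPdim}(X)+{\rm FPdim}(Z)={\rm FPdim}(Y),
$$
where the first inequality is subadditivity (Proposition \ref{subadd}) and the second is Proposition \ref{lesseq} applied to $X$ and $Z$. All four quantities coincide, which forces ${\rm FPdim}(F(X))={\rm FPdim}(X)$ and ${\rm FPdim}(F(Z))={\rm FPdim}(Z)$; hence $X,Z\in\C_{\rm ex}$. \emph{Closure under the remaining operations.} Closure under finite direct sums is immediate from additivity of $F$ (Proposition \ref{addmon}(i)) and of ${\rm FPdim}$, so $\C_{\rm ex}$ is an abelian subcategory of $\C$ on which the inclusion is exact. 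Since $F$ is monoidal (Proposition \ref{addmon}(i)) we have $F(X\otimes Y)\cong F(X)\otimes F(Y)$, $F(X^*)\cong F(X)^*$ and $F(\mathbf 1)\cong \mathbf 1$ in $\C^{(1)}\boxtimes {\rm Ver}_p$; combined with multiplicativity of ${\rm FPdim}$ and ${\rm FPdim}(W^*)={\rm FPdim}(W)$, this shows that $X\otimes Y$, $X^*$ and $\mathbf 1$ lie in $\C_{\rm ex}$ whenever $X,Y$ do. Thus $\C_{\rm ex}$ is a tensor subcategory of $\C$; in particular it again has finitely many simple objects (these are among the simples of $\C$) and is itself a tensor category.

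It remains to identify the Frobenius functor of $\C_{\rm ex}$ with the restriction of $F$. Because $\C_{\rm ex}$ is a tensor subcategory, the tensor power $X^{\otimes p}$ taken in $\C_{\rm ex}$ agrees with the one in $\C$, and each $F_i(X)=B_i(X^{\otimes p})$ is a subquotient of $X^{\otimes p}\in\C_{\rm ex}$, hence lies in $\C_{\rm ex}$; so the Frobenius functor of $\C_{\rm ex}$ is $F|_{\C_{\rm ex}}$, landing in $\C_{\rm ex}^{(1)}\boxtimes {\rm Ver}_p$, and ${\rm FPdim}$ computed in $\C_{\rm ex}$ agrees with ${\rm FPdim}$ in $\C$. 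By the very definition of $\C_{\rm ex}$, condition (iv) of Theorem \ref{equiva} holds for the category $\C_{\rm ex}$, so by that theorem $\C_{\rm ex}$ is Frobenius exact. For maximality, let $\mathcal{D}\subset\C$ be any Frobenius exact tensor subcategory; by the same argument its Frobenius functor is $F|_{\mathcal{D}}$, and Theorem \ref{equiva} (applied to $\mathcal{D}$, which has finitely many simples) gives ${\rm FPdim}(F(X))={\rm FPdim}(X)$ for all $X\in\mathcal{D}$, i.e. $\mathcal{D}\subseteq\C_{\rm ex}$; being a tensor subcategory of $\C$ it is then a tensor subcategory of $\C_{\rm ex}$.

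Given the earlier results this proposition is not hard; the only point demanding care is the bookkeeping in the last paragraph --- that ${\rm FPdim}$ and the functors $B_i$ are computed intrinsically, so that the Frobenius functor of a tensor subcategory is literally the restriction of $F$ --- after which both the Frobenius exactness of $\C_{\rm ex}$ and its maximality are immediate from Theorem \ref{equiva}. The most substantive step is the subquotient-closure argument, but it is entirely contained in the displayed inequality chain, so I do not expect a genuine obstacle.
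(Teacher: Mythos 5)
Your proof is correct and follows essentially the same route as the paper: the subquotient-closure argument via the chain ${\rm FPdim}(Y)={\rm FPdim}(F(Y))\le {\rm FPdim}(F(X))+{\rm FPdim}(F(Z))\le {\rm FPdim}(X)+{\rm FPdim}(Z)$ using Propositions \ref{subadd} and \ref{lesseq} is exactly the paper's argument. The remaining points (closure under $\otimes$ and duals via monoidality of $F$, and the identification of the Frobenius functor of a tensor subcategory with the restriction of $F$ so that Theorem \ref{equiva}(iv) applies) are the parts the paper dismisses as ``clear''/``obvious,'' and you have simply spelled them out correctly.
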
  

\begin{proof}
It is clear from condition (iv) of Theorem \ref{equiva} that $\C_{\rm ex}$ is closed under tensor products, duals and taking subquotients, i.e., it is a tensor subcategory. 
Indeed, to see that it is closed under taking subquotients, take a short exact sequence 
$$
0\to X\to Y\to Z\to 0
$$
in $\C$ such that $Y\in \C_{\rm ex}$. Then 
$$
{\rm FPdim}(X)+
{\rm FPdim}(Z)={\rm FPdim}(Y)=
$$
$$
={\rm FPdim}(F(Y))\le {\rm FPdim}(F(X))+{\rm FPdim}(F(Z)) 
$$
(the latter inequality follows from Proposition \ref{subadd}). 
Since by Proposition \ref{lesseq} the Frobenius functor does not increase the Frobenius-Perron dimension, 
it follows that 
${\rm FPdim}(F(X))={\rm FPdim}(X)$ and
${\rm FPdim}(F(Z))={\rm FPdim}(Z)$, i.e., $X,Z\in \C_{\rm ex}$.

The rest is obvious. 
\end{proof} 

\begin{proposition}\label{onlyif}
(i) Let $\C$ admit a symmetric tensor functor to a Frobenius exact category $\D$ (for example, $\D={\rm Ver}_p$). Then $\C$ is Frobenius exact.  

(ii) Let $\C,\D$ be symmetric tensor categories with finitely many simple objects. 
If $E: \C\to \D$ is a symmetric tensor functor then $E(X)\in \D_{\rm ex}$ 
if and only if $X\in \C_{\rm ex}$. 
\end{proposition}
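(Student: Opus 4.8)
The plan is to reduce both parts to the key dimension identity in Theorem~\ref{equiva}, namely that $\C$ is Frobenius exact iff $\mathrm{FPdim}(F(X))=\mathrm{FPdim}(X)$ for all $X$, together with the commutation of $F$ with symmetric tensor functors (Proposition~\ref{addmon}(ii)) and the fact that any symmetric tensor functor preserves Frobenius--Perron dimension. The point is that the inequality $\mathrm{FPdim}(F(X))\le \mathrm{FPdim}(X)$ of Proposition~\ref{lesseq} always holds, so Frobenius exactness is exactly the assertion that this inequality is an equality on every object, and membership in $\C_{\rm ex}$ is exactly equality on that particular object.

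For part (i): suppose $E:\C\to\D$ is a symmetric tensor functor with $\D$ Frobenius exact. Fix $X\in\C$. Since $F$ commutes with $E$ we have $E\boxtimes\mathrm{id}_{\Ver_p}$ applied to $F_\C(X)$ equal to $F_\D(E(X))$. Now $E$ (extended to $\C\boxtimes\Ver_p\to\D\boxtimes\Ver_p$) is still a tensor functor, hence preserves $\mathrm{FPdim}$, so $\mathrm{FPdim}(F_\C(X))=\mathrm{FPdim}(F_\D(E(X)))$. Since $\D$ is Frobenius exact, $\mathrm{FPdim}(F_\D(E(X)))=\mathrm{FPdim}(E(X))=\mathrm{FPdim}(X)$, the last equality again because $E$ is a tensor functor. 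Hence $\mathrm{FPdim}(F_\C(X))=\mathrm{FPdim}(X)$ for all $X$, so by Theorem~\ref{equiva}(iv)$\Rightarrow$(iii), $\C$ is Frobenius exact. (That $\Ver_p$ itself is Frobenius exact follows since $F$ on a semisimple category is exact, or directly from Example~\ref{FofLi}.)

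For part (ii): again fix $X\in\C$ and run the same computation. Because $F$ commutes with $E$, and $E$ preserves Frobenius--Perron dimension (of objects of $\C$ and of $\C\boxtimes\Ver_p$), we get the chain
$$
\mathrm{FPdim}(F_\D(E(X)))=\mathrm{FPdim}(E(F_\C(X)))=\mathrm{FPdim}(F_\C(X)),\qquad \mathrm{FPdim}(E(X))=\mathrm{FPdim}(X).
$$
Therefore $\mathrm{FPdim}(F_\D(E(X)))=\mathrm{FPdim}(E(X))$ holds if and only if $\mathrm{FPdim}(F_\C(X))=\mathrm{FPdim}(X)$, which by the definition of $\C_{\rm ex}$ and $\D_{\rm ex}$ is precisely the statement that $E(X)\in\D_{\rm ex}$ iff $X\in\C_{\rm ex}$.

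The only genuinely delicate point is making sure that "$E$ preserves Frobenius--Perron dimension" is legitimately applied on the target $\C\boxtimes\Ver_p$ rather than just on $\C$: one needs that $E\boxtimes\mathrm{id}_{\Ver_p}:\C\boxtimes\Ver_p\to\D\boxtimes\Ver_p$ is again an exact symmetric monoidal functor between tensor categories with finitely many simple objects (so that $\mathrm{FPdim}$ is defined and preserved), and that the decomposition $F(X)=\bigoplus_i F_i(X)\otimes L_i$ makes $\mathrm{FPdim}(F(X))=\sum_i \mathrm{FPdim}(F_i(X))\,\mathrm{FPdim}(L_i)$ transform correctly under $E$. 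This is routine given Section~4 (the Deligne product $\C\boxtimes\Ver_p$ is a tensor category since $\Ver_p$ is fusion and $\C$ has finitely many simples, and $E_\A$-type functoriality was set up there), so I expect no real obstacle — the substance of the proposition is entirely in the $\mathrm{FPdim}$ bookkeeping already done in Theorem~\ref{equiva} and Proposition~\ref{lesseq}.
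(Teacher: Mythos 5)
Your part (ii) is fine and is essentially the paper's own argument: $E(F(X))\cong F(E(X))$ by Proposition \ref{addmon}(ii), plus preservation of Frobenius--Perron dimension by the (extended) tensor functor, immediately translates membership in $\D_{\rm ex}$ into membership in $\C_{\rm ex}$.

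Part (i), however, has a genuine gap. Your argument runs entirely through Frobenius--Perron dimensions and the implication (iv)$\Rightarrow$(iii) of Theorem \ref{equiva}, and both ingredients are available only when $\C$ and $\D$ (hence also $\C\boxtimes\Ver_p$ and $\D\boxtimes\Ver_p$) have finitely many simple objects: $\FPdim$ is not defined otherwise, and Theorem \ref{equiva} is stated under exactly that standing hypothesis. But Proposition \ref{onlyif}(i) carries no such finiteness assumption --- note that (ii) explicitly adds ``with finitely many simple objects'' while (i) does not --- and it is precisely in the non-finite setting that the paper later invokes it: the ``if'' direction of Conjecture \ref{con1}, about categories of moderate growth (e.g.\ $\Rep(G)$ for an arbitrary affine group scheme), is deduced from (i). So your proof covers only a special case. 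The intended argument avoids dimensions altogether: by Proposition \ref{addmon}(ii) one has $(E\boxtimes\id_{\Ver_p})\circ F_\C\cong F_\D\circ E$; the right-hand side is exact since $E$ is exact and $\D$ is Frobenius exact, and $E\boxtimes\id_{\Ver_p}$ is an exact faithful functor, hence reflects exactness of sequences; therefore $F_\C$ is exact. Even in the finite case your dimension bookkeeping is a detour compared with this one-line transport of exactness, and in the general case it does not apply at all.
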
 

\begin{proof}
(i) By Proposition \ref{addmon}(ii), the Frobenius functor commutes with symmetric tensor functors, which implies the statement.  

(ii) We have $E(X)\in \D_{\rm ex}$ iff ${\rm FPdim}(E(X))={\rm FPdim}(F(E(X)))$. But $E(F(X))=F(E(X))$ by Proposition \ref{addmon}(ii), and $E$ preserves dimensions, so this is equivalent to   
${\rm FPdim}(X)={\rm FPdim}(F(X))$, which is equivalent to the condition $X\in \C_{\rm ex}$. 
\end{proof} 

\begin{proposition}\label{FPne0} Let $\C$ be a tensor category over $\k$. 
If there exists a projective object not killed by $F$ then 
$\C$ is Frobenius exact. 
\end{proposition}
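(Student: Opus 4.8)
The plan is to prove directly that $F$ is exact, without appealing to Theorem \ref{equiva}, so that no finiteness hypothesis is needed: I would tensor an arbitrary short exact sequence with the distinguished projective $P$ and exploit that $F$ is monoidal. First I would record two elementary facts. (a) If $P\in\C$ is projective then $X\otimes P$ is projective for every $X\in\C$, since $\Hom_\C(X\otimes P,-)\cong\Hom_\C(P,\,{}^*X\otimes -)$ is a composite of exact functors; consequently every short exact sequence in $\C$ whose right-hand term has the form $Z\otimes P$ splits. (b) The target $\C^{(1)}\boxtimes\Ver_p$ is again a symmetric tensor category, so tensoring with a nonzero object $V$ there is exact and \emph{reflects vanishing}: if $A\otimes V=0$ then $A\hookrightarrow A\otimes V\otimes{}^*V=0$ forces $A=0$ (the coevaluation $\be\to V\otimes{}^*V$ is nonzero, hence injective because $\be$ is simple); combined with exactness of $-\otimes V$, a complex becomes exact as soon as it does after applying $-\otimes V$.

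Now fix a short exact sequence $0\to X\xrightarrow{f}Y\xrightarrow{g}Z\to 0$ in $\C$ and a projective $P$ with $F(P)\ne 0$. By (a), the sequence $0\to X\otimes P\to Y\otimes P\to Z\otimes P\to 0$ is split exact. Applying the additive functor $F$ and transporting along the monoidal isomorphisms $F(-\otimes P)\cong F(-)\otimes F(P)$ of Proposition \ref{addmon}(i) (these are natural, so they carry $F(f\otimes\id_P)$ to $F(f)\otimes\id_{F(P)}$ and $F(g\otimes\id_P)$ to $F(g)\otimes\id_{F(P)}$), I obtain a split exact sequence
\[
0\to F(X)\otimes F(P)\xrightarrow{F(f)\otimes\id} F(Y)\otimes F(P)\xrightarrow{F(g)\otimes\id} F(Z)\otimes F(P)\to 0
\]
in $\C^{(1)}\boxtimes\Ver_p$.

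Finally I would descend this through $-\otimes F(P)$. Functoriality of $F$ gives $F(g)\circ F(f)=0$, so $0\to F(X)\xrightarrow{F(f)}F(Y)\xrightarrow{F(g)}F(Z)\to 0$ is a complex; let $K=\ker F(f)$, $Q'=\coker F(g)$ and $H=\ker F(g)/\mathrm{im}\,F(f)$ be its homology objects. Because $-\otimes F(P)$ is exact, $K\otimes F(P)$, $Q'\otimes F(P)$ and $H\otimes F(P)$ are identified with the corresponding homology of the displayed split exact sequence, hence all vanish; since $F(P)\ne 0$, fact (b) forces $K=Q'=H=0$. Thus $F$ is exact, i.e.\ $\C$ is Frobenius exact.

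The only point requiring a little care is the identification in the last step of the homology of $F(X)\xrightarrow{F(f)}F(Y)\xrightarrow{F(g)}F(Z)$, after tensoring with $F(P)$, with the homology of the split sequence above — this is precisely biexactness of the tensor product of $\C^{(1)}\boxtimes\Ver_p$ applied to kernels, cokernels and subquotients, so it is routine. A slightly cheaper variant, available when $\C$ has finitely many simple objects, is to run the argument only for an injection $f$, concluding that $F$ sends injections to injections, and then to invoke Theorem \ref{equiva}(i)$\Rightarrow$(iii).
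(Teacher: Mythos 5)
Your argument is correct and is essentially the paper's own proof: tensor the short exact sequence with $P$ so that it splits, use additivity and monoidality of $F$ to conclude $F(S)\otimes F(P)\cong F(S\otimes P)$ is (split) exact, and then use that tensoring with the nonzero object $F(P)$ in the target tensor category is exact and faithful to conclude $F(S)$ itself is exact. The only difference is presentational — the paper phrases this as a contradiction while you run it directly and spell out the ``reflects vanishing'' step explicitly.
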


\begin{proof}    Let $P$ be a projective object not killed by $F$, that is $F(P)\ne 0$. For the
sake of contradiction assume that $\C$ is not Frobenius exact, i.e. there exists a short exact
sequence $S=0\to X\to Y\to Z\to 0$ such that the sequence $F(S)=0\to F(X)\to F(Y)\to F(Z)\to 0$ is not
exact. Then the sequence $F(S)\otimes F(P)$ is also not exact. However this sequence is
isomorphic to the sequence $F(S\otimes P)$ by monoidality of $F$, so it must be exact since the sequence $S\otimes P$ is split and $F$ is additive, see Proposition \ref{addmon} (i). We obtained a contradiction, so the result is proved. 

\begin{comment}
Let $P$ be a projective generator of $\C$. Then $P^{\otimes N}=\oplus_i c_i(N)P_i$, where $P_i$ 
are the indecomposable projectives and $c_i(N)\sim C_i({\rm FPdim}(P))^N$ for some positive constants $C_i$.  
Since $F(P_i)\ne 0$ for some $i$, we have 
$$
C_1{\rm FPdim}(P)^N\le {\rm FPdim}F(P^{\otimes N})\le C_2{\rm FPdim}(P)^N
$$ 
for some $C_1,C_2>0$. 
But $F(P^{\otimes N})=F(P)^{\otimes N}$. Thus, ${\rm FPdim}(F(P))= {\rm FPdim}(P)$. 
 So for any projective object 
$Q$ we have ${\rm FPdim}(F(Q))={\rm FPdim}(Q)$. Now take any $X\in \C$ and let $Q=X\otimes R$, where $R$ is any nonzero projective. 
Since ${\rm FPdim}(F(Q))={\rm FPdim}(Q)$ and ${\rm FPdim}(F(R))={\rm FPdim}(R)$, we have ${\rm FPdim}(F(X))={\rm FPdim}(X)$, i.e., $\C$ is 
Frobenius exact, as claimed.
\end{comment}
\end{proof}

\begin{remark} (i) Proposition \ref{FPne0} holds for any additive monoidal functor in the place of $F$, with the same proof. 

(ii) Proposition \ref{FPne0} is not useful if the category $\C$ has no nonzero projective objects.
However, it generalizes straightforwardly to projective pro-objects or injective ind-objects.
\end{remark}

Recall that a tensor category $\C$ is said to be of {\it moderate (or subexponential) growth} if for any $X\in \C$ there exists $C_X\ge 1$ such that the length of $X^{\otimes n}$ is $\le C_X^n$ for all $n\ge 1$ (\cite{EGNO}, 9.11).

\begin{conjecture}\label{con1} Let $\C$ be a symmetric tensor category of moderate growth over a field $\k$ of characteristic $p$. 
Then $\C$ is Frobenius exact if and only if it admits a fiber functor to ${\rm Ver}_p$. 
\end{conjecture}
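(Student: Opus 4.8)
The ``if'' direction is immediate from what has already been set up: if $E\colon\C\to\Ver_p$ is a fiber functor, then $\Ver_p$, being semisimple, has an exact Frobenius functor and so is Frobenius exact, whence Proposition~\ref{onlyif}(i) with $\D=\Ver_p$ shows that $\C$ is Frobenius exact. The real content is the ``only if'' direction, and here the plan is to bootstrap from the finite case, to be proved in Section~8, to the general moderate-growth setting, following the pattern by which Deligne promoted the super-Tannakian theorem from finite categories to categories of moderate growth (\cite{De2,De3}).

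So assume $\C$ is Frobenius exact and of moderate growth. The first step is to reduce to the case that $\C$ is finitely generated as a tensor category. Indeed $\C$ is the filtered union of the tensor subcategories $\langle X\rangle$ generated by its objects $X$; each inclusion $\langle X\rangle\hookrightarrow\C$ is an exact, faithful symmetric tensor functor commuting with $F$ (Proposition~\ref{addmon}(ii)), so, since a faithful exact functor reflects exact sequences, each $\langle X\rangle$ is again Frobenius exact and of moderate growth. If one knows that a fiber functor $\langle X\rangle\to\Ver_p$ exists and is \emph{unique} up to isomorphism, then these functors are automatically compatible along the transition inclusions and glue to a fiber functor on $\C$. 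Thus the problem is reduced to a finitely generated $\C$, together with the (separate, but expected) uniqueness statement; the latter, for finite $\C$, is part of the assertion of Section~8, and for finitely generated $\C$ it should follow by exhausting $\C$ by its finite tensor subcategories.

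This leaves the essential case: $\C$ finitely generated, Frobenius exact, of moderate growth. The hard part, and the main obstacle, is that such a $\C$ need not be finite --- it may have infinitely many simple objects, as $\Rep(GL_n)$ in characteristic $p$ shows --- so Section~8 does not apply verbatim. The plan is to construct the fiber functor by passing through the semisimple world: let $\overline\C$ be the semisimplification of $\C$, with its semisimplification functor $\C\to\overline\C$. Then $\overline\C$ is a semisimple symmetric tensor category of moderate growth (expected to be a fusion category under these hypotheses), and one wants first to equip $\overline\C$ with a fiber functor $\overline\omega\colon\overline\C\to\Ver_p$, which in the fusion case is Ostrik's theorem \cite{O}. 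The composite of $\overline\omega$ with the semisimplification functor is symmetric monoidal but not exact, since semisimplification is not; the crucial point ought to be that Frobenius exactness of $\C$ is precisely what allows one to repair it to an honest symmetric tensor functor $\omega\colon\C\to\Ver_p$. Concretely, one expects $\omega$ to arise as a limit of iterates of the Frobenius functor $F$, mirroring the semisimple construction of \cite{O}, where exactness of $F$ makes the $6$-periodic sequences of Proposition~\ref{longex} degenerate, moderate growth together with the fact that $F$ does not increase Frobenius--Perron dimension (Proposition~\ref{lesseq}, Theorem~\ref{equiva}) keeps lengths and dimensions bounded along the iteration, and the limiting functor is therefore exact. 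Establishing the convergence and exactness of this limit, and identifying its target with $\Ver_p$ compatibly with the canonical $\pi_1(\Ver_p)$-action, is where the real difficulty lies; once $\omega$ is available, Tannakian reconstruction identifies $\C$ with the category of representations of an affine group scheme in $\Ver_p$, as required.
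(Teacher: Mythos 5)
You should first note the status of this statement in the paper: it is stated as Conjecture \ref{con1}, not a theorem. The paper proves only the ``if'' direction (exactly as you do, via Proposition \ref{onlyif}(i)) and the ``only if'' direction in the special case of \emph{finite} tensor categories (Theorem \ref{Kth} in Section \ref{fib}), by a route quite different from yours: Coulembier's results \cite{Co} produce a nonzero commutative splitting algebra, one passes to the quotient of its module category by $\mm$-negligible morphisms, uses finiteness (via Appendices A and B) to show this quotient is a fusion category, and then applies \cite{O}, Theorem 1.5. The general moderate-growth case is left open, so any complete proof would be new mathematics.

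Measured against that, your proposal has a genuine gap — one you partly acknowledge yourself. The reduction to finitely generated subcategories $\langle X\rangle$ is sound as far as it goes (each is Frobenius exact and of moderate growth), but it does not reduce to the finite case: finitely generated symmetric tensor categories of moderate growth need not be finite, so neither the existence nor the uniqueness statement of Theorem \ref{Kth} applies to the pieces you want to glue, and the gluing step therefore has nothing to glue. The proposed repair via the semisimplification $\overline\C$ is worse than circular in the current state of knowledge: $\overline\C$ need not be fusion, and equipping a semisimple symmetric tensor category of moderate growth with a fiber functor to ${\rm Ver}_p$ is precisely the conjecture of \cite{O} which Conjecture \ref{con1} is stated to generalize, so you would be assuming a statement at least as strong as the one to be proved. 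Finally, the central construction — a ``limit of iterates of $F$'' that is exact and lands in ${\rm Ver}_p$ — is never defined; note also that $F$ takes values in $\C^{(1)}\boxtimes{\rm Ver}_p$ and is only additive monoidal, not exact a priori, so even formulating such a limit and proving its exactness is the entire difficulty, as you concede. In short, the ``if'' half is correct and coincides with the paper's observation, but the ``only if'' half remains a program, not a proof, and its key ingredients are either unavailable or equivalent to open problems.
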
 

Conjecture \ref{con1} is a generalization of the conjecture from \cite{O} that any semisimple symmetric tensor category of moderate growth admits a fiber functor to ${\rm Ver}_p$. 
Note that the ``if" direction of Conjecture \ref{con1} follows from Proposition \ref{onlyif}(i), so just the ``only if" direction requires proof. 

Conjecture \ref{con1} implies that Frobenius exact categories in characteristic 2 are categories of representations of affine group schemes, and in characteristic 3 they are categories of representations of affine supergroup schemes.

In Section \ref{fib} we will prove Conjecture \ref{con1} in the special case of finite tensor categories. 

\begin{remark} The moderate growth condition in Conjecture \ref{con1} cannot be dropped: e.g., the Deligne category ${\rm Rep}(S_t)$, $t\in \Bbb Z_p$ defined by P. Deligne in his letter to the second author 
(see \cite{Ha}) is Frobenius exact (since it is a tensor subcategory in an ultraproduct of Frobenius exact categories), but does not fiber over ${\rm Ver}_p$. 
\end{remark} 

\section{Frobenius exact finite categories fiber over the Verlinde category}\label{fib} 

\subsection{The main theorem} 
In this section we prove Conjecture \ref{con1} in the special case of finite tensor categories
using results of K.~Coulembier \cite{Co}.

\begin{theorem} \label{Kth}
A finite tensor category $\C$ over an algebraically closed field of characteristic $p$ is Frobenius exact if and only if there exists a symmetric tensor (i.e., fiber)
functor $\C \to \Ver_p$. 
Moreover, if this functor exists, it is unique up to a non-unique isomorphism. 
\end{theorem}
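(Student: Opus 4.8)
The plan is to prove the two directions separately, deriving the nontrivial ``only if'' direction from Coulembier's work on locally semisimple tensor categories. The ``if'' direction is immediate: if a fiber functor $E:\C\to\Ver_p$ exists, then since $\Ver_p$ is (trivially) Frobenius exact and the Frobenius functor commutes with symmetric tensor functors (Proposition \ref{addmon}(ii)), Proposition \ref{onlyif}(i) gives that $\C$ is Frobenius exact. For the ``only if'' direction, I would first translate the hypothesis: by the remark following the definition of Frobenius exact categories, $\C$ being Frobenius exact is equivalent to $\C$ being locally semisimple in the sense of \cite{Co}, i.e. it satisfies the equivalent conditions of \cite{Co}, Theorem C. The essential point is then to invoke the part of Coulembier's classification that produces, for a finite locally semisimple symmetric tensor category, a symmetric tensor functor to $\Ver_p$; I would package this as the main input and check carefully that the finiteness hypothesis on $\C$ is used exactly where \cite{Co} requires it (enough projectives / finitely many simples), and that ``algebraically closed field of characteristic $p$'' matches the standing assumptions there.

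For the uniqueness statement, I would argue as follows. Suppose $E_1,E_2:\C\to\Ver_p$ are two fiber functors. The standard approach is to consider them as fiber functors and compare via the internal $\Hom$ / Tannakian reconstruction machinery: $E_1$ realizes $\C$ as $\Rep$ of an affine group scheme $G_1$ in $\Ver_p$ (compatibly with the $\pi_1(\Ver_p)$-action, as in the introduction's discussion), and similarly for $E_2$ and $G_2$. Any two fiber functors to $\Ver_p$ differ by a tensor automorphism datum, which is classified by a torsor; because $\Ver_p$ has no nontrivial such torsors in the relevant sense (its Picard/automorphism structure is rigid enough — $\Ver_p$ is a fusion category with trivial ``unipotent part''), the torsor is trivial and $E_1\cong E_2$. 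Concretely I would reduce to showing $\underline{\mathrm{Isom}}^\otimes(E_1,E_2)$ is a nonempty (hence trivial) torsor over the affine group scheme $\underline{\mathrm{Aut}}^\otimes(E_1)$ in $\Ver_p$, using that such torsors over a Frobenius exact (indeed semisimple) base with no infinitesimal obstructions split; alternatively, I expect \cite{Co} already contains this uniqueness, in which case I would simply cite it.

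I expect the main obstacle to be purely expository rather than mathematical: the theorem as stated is essentially a repackaging of results of \cite{Co} combined with the equivalence ``Frobenius exact $\Leftrightarrow$ locally semisimple'' noted earlier, so the real work is (a) making precise the dictionary between the paper's notion of Frobenius exactness and Coulembier's locally semisimple condition — this is where one must be careful that the two Frobenius functors (the ``external Frobenius twist'' $\Fr$ of \cite{Co} and the functor $F$ here) genuinely agree, which the earlier remark asserts via \cite{Co}, Proposition 4.3.3, and (b) extracting the uniqueness cleanly. A secondary subtlety is that Coulembier's existence result may be phrased as a fiber functor to $\Ver_p$ only after passing through the symmetric tensor structure compatible with $\pi_1(\Ver_p)$; I would need to confirm that no extra hypothesis (such as the category being ``incompressible'' or of a special type) is smuggled in, so that the statement really holds for \emph{every} finite Frobenius exact $\C$. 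Once the dictionary is in place, the proof is short, which is why the authors attribute it to \cite{Co}.
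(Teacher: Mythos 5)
There is a genuine gap in the ``only if'' direction: you outsource the existence of the fiber functor to \cite{Co}, but Coulembier's results do not produce a symmetric tensor functor to $\Ver_p$ for finite locally semisimple categories --- that existence statement \emph{is} Theorem \ref{Kth}, one of the main new results of this paper. What \cite{Co} (Theorem C, Lemma 4.3.2) supplies is only the dictionary ``Frobenius exact $\Leftrightarrow$ locally semisimple'' together with the existence of a nonzero commutative splitting algebra $A$ in $\Ind(\C)$ (Theorem \ref{locsemi}). The actual construction of the fiber functor, which your proposal omits entirely, is the content of Section \ref{fib}: one forms the Karoubian category $\P_A$ of $A$-modules that are summands of free ones, quotients by the tensor ideal of $\mm$-negligible morphisms for a maximal ideal $\mm\subset\Hom(\be,A)$ to get an exact surjective symmetric monoidal functor $\C\to\mR$ into a non-degenerate category (Corollary \ref{quot}); then uses finiteness of $\C$, the Hilbert-series bound of Appendix A (Corollary \ref{c3}) and Lemma \ref{injectiv} to replace $A$ by a finite-length splitting algebra in $\C$ with $\End(\be)=\k$ and finite-dimensional Hom-spaces (Corollary \ref{bquot}); then proves $\mR$ is semisimple, via simplicity of the representing algebra $B$, the exactness criterion of Appendix B (Theorem \ref{NZ}) giving rigidity of $\C_B$, and vanishing of traces of nilpotents (Lemma \ref{Bsim} and its corollary); and only at the very end, once $\mR$ is known to be a symmetric \emph{fusion} category, invokes the semisimple case, namely \cite{O}, Theorem 1.5, to obtain $\mR\to\Ver_p$. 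Your worry about whether an extra hypothesis is ``smuggled in'' is thus beside the point: the needed statement simply is not in \cite{Co}, and no amount of bookkeeping of hypotheses will extract it from there.

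Two smaller points. Your ``if'' direction is correct and matches the paper (Proposition \ref{onlyif}(i) plus Proposition \ref{addmon}(ii)). For uniqueness, the paper cites \cite{EOV}, Theorem 2.6, whose proof extends to the non-semisimple case with a minor modification (existence of the right adjoint to the exact functor $\C\to\Ver_p$). Your alternative torsor argument is too vague to stand as written: the assertion that any $\underline{\mathrm{Aut}}^\otimes(E_1)$-torsor in $\Ver_p$ splits because $\Ver_p$ is ``rigid enough'' is exactly the nontrivial content one would have to prove, and no Tannakian formalism over $\Ver_p$ is set up here to make sense of it; if you want to avoid redoing \cite{EOV}, you should cite it rather than gesture at a torsor-triviality claim.
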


Theorem \ref{Kth} is a generalization of \cite{O}, Theorem 1.5, which is a specialization of this theorem to the semisimple case (i.e., when $\C$ is a fusion category). 

\begin{corollary} For $p=2$ finite Frobenius exact categories are precisely the categories of representations of finite group schemes, and for $p=3$ they are the categories of representations of finite supergroup schemes. In particular, for $p=2,3$ any finite Frobenius exact category is integral. 
\end{corollary}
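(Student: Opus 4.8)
The plan is to deduce this directly from Theorem \ref{Kth} by invoking (super-)Tannakian reconstruction, using the identifications $\Ver_2 \simeq \Vec_\k$ and $\Ver_3 \simeq \sVec_\k$ (the latter by the description of $\Ver_p$ in Subsection 2.3: for $p=3$ one has $\Ver_3 = \Ver_3^+\boxtimes \sVec_\k$ with $\Ver_3^+ = \Vec_\k$). Let $\C$ be a finite Frobenius exact category. By Theorem \ref{Kth} there is a fiber functor $\omega\colon \C\to \Ver_p$, unique up to isomorphism.

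Case $p=2$. Here $\omega$ is a symmetric fiber functor $\C\to \Vec_\k$. By symmetric Tannakian reconstruction (see e.g. \cite{EGNO}, Chapter 5), $\C$ is equivalent, as a symmetric tensor category, to $\Rep(G)$ where $G=\underline{\mathrm{Aut}}^{\otimes}(\omega)$ is the affine group scheme of tensor automorphisms of $\omega$. Since $\C$ is finite we may write $\C\simeq A\text{-}{\rm mod}$ for a finite-dimensional Hopf algebra $A$; dualizing, $A^*=\O(G)$ is a finite-dimensional commutative Hopf algebra, i.e. $G$ is a finite group scheme, unique up to isomorphism because $\omega$ is. Conversely, if $G$ is a finite group scheme then $\Rep(G)$ is a finite tensor category admitting the forgetful fiber functor to $\Vec_\k=\Ver_2$, hence is Frobenius exact by Proposition \ref{onlyif}(i) (note $\Ver_p$ is itself Frobenius exact, being semisimple). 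This proves the $p=2$ statement.

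Case $p=3$. Now $\omega$ is a symmetric fiber functor $\C\to \sVec_\k$, and the super-analogue of Tannakian reconstruction from a given super fiber functor (cf. \cite{EGNO}, Chapter 9, and \cite{De2}) gives $\C\simeq \Rep(G,\epsilon)$ for an affine supergroup scheme $(G,\epsilon)$, with $G=\underline{\mathrm{Aut}}^{\otimes}(\omega)$ and $\epsilon$ the parity element. As above, finiteness of $\C$ forces $(G,\epsilon)$ to be a finite supergroup scheme, unique up to isomorphism, and conversely $\Rep(G,\epsilon)$ carries the forgetful fiber functor to $\sVec_\k=\Ver_3$, so is Frobenius exact by Proposition \ref{onlyif}(i). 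Finally, integrality in both cases follows since a tensor functor preserves Frobenius–Perron dimensions, and in $\Vec_\k$ (resp. $\sVec_\k$) the Frobenius–Perron dimension of any object is the dimension of the underlying vector space (resp. the total dimension $\dim V_0+\dim V_1$), a nonnegative integer; applying $\omega$ shows every object of $\C$ has integer Frobenius–Perron dimension.

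The content beyond Theorem \ref{Kth} is purely the invocation of classical (super-)Tannakian duality, which applies over any algebraically closed field; the one point deserving a line of care is the matching of "finite tensor category" with "finite (super)group scheme", i.e. that $\C$ having finitely many simples and enough projectives corresponds exactly to $\O(G)$ being finite-dimensional. I do not expect any real obstacle here.
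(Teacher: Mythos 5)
Your proof is correct and is essentially the argument the paper intends: the corollary is an immediate consequence of Theorem \ref{Kth} together with the identifications $\Ver_2\simeq\Vec_\k$ and $\Ver_3\simeq\sVec_\k$, followed by (super-)Tannakian reconstruction for the "only if" direction and Proposition \ref{onlyif}(i) for the converse. The integrality argument via preservation of Frobenius--Perron dimension is also the standard one.
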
 

%\subsection{Extension of scalars} Let $\k \subset K$ be a field extension. For a $\k-$linear finite tensor category $\C$ we consider 

Theorem \ref{Kth} is proved in the next few subsections. First of all, the uniqueness of the fiber functor if it exists follows from \cite{EOV}, Theorem 2.6; more precisely, this theorem is proved in \cite{EOV} when $\C$ is semisimple, but the proof extends verbatim\footnote{There is one small change required: the existence of the right adjoint functor $I: \Ver_p\to \C$ as in \cite[3.2]{EOV} follows from the exactness of $F: \C \to \Ver_p$ and finiteness of $\C$.} to the non-semisimple case. Thus, it remains to prove the existence of the fiber functor. 

\subsection{Splitting algebras} Let $\Ind(\C)$ be the category of ind-objects of $\C$, see e.g. 
\cite[8.6]{KS}. Let $A\in \Ind(\C)$ be a unital associative algebra. We say that $A$ is a {\em splitting
algebra} if the functor $X\mapsto X\otimes A$ from $\C$ to the category $\Ind(\C)_A$ of right 
$A-$modules splits all short exact sequences in $\C$.   Let $\Vect_\k$ be the category of all (possibly
infinite dimensional) vector spaces over $\k$. 

\begin{lemma} \label{splA}
An algebra $A$ is a splitting algebra if and only if the functor $\Hom(?,A): \C \to \Vect_\k$
is exact.
\end{lemma}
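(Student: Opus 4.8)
The plan is to prove the two directions of the equivalence separately, using the adjunction between the functor $X \mapsto X \otimes A$ and the functor $\Hom(?,A)$ in its appropriate form. First I would recall that for $A$ a unital associative algebra in $\Ind(\C)$, the free module functor $X \mapsto X \otimes A \colon \C \to \Ind(\C)_A$ has a right adjoint, namely the forgetful functor followed by $\Hom(\be, -)$; more precisely, for $X \in \C$ and $M$ a right $A$-module one has $\Hom_A(X \otimes A, M) = \Hom_\C(X, M)$, where on the right $M$ is regarded as an object of $\Ind(\C)$ and $\Hom_\C(X, M) := \Hom_{\Ind(\C)}(X, M)$ lands in $\Vect_\k$. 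In particular $\Hom_A(X \otimes A, A) = \Hom(X, A)$, so the functor $\Hom(?,A)$ factors as $\C \xrightarrow{?\otimes A} \Ind(\C)_A \xrightarrow{\Hom_A(-,A)} \Vect_\k$.

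For the "only if" direction, suppose $A$ is a splitting algebra, i.e. $? \otimes A$ sends every short exact sequence $0 \to X \to Y \to Z \to 0$ in $\C$ to a split exact sequence $0 \to X \otimes A \to Y \otimes A \to Z \otimes A \to 0$ of right $A$-modules. Applying the additive functor $\Hom_A(-,A)$ to a split short exact sequence yields a split short exact sequence of vector spaces; combined with the factorization above this says exactly that $0 \to \Hom(Z,A) \to \Hom(Y,A) \to \Hom(X,A) \to 0$ is exact, so $\Hom(?,A)$ is exact.

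For the "if" direction, suppose $\Hom(?,A)$ is exact. Given a short exact sequence $S = (0 \to X \to Y \to Z \to 0)$ in $\C$, I want to show $S \otimes A$ splits in $\Ind(\C)_A$. Since $A$ is flat on the right in the relevant sense (tensoring with the fixed object $A$ over $\k$ is exact because $\C$ is abelian with biexact tensor product — actually $?\otimes A$ is exact as a functor $\C \to \Ind(\C)$, being a filtered colimit of the exact functors $? \otimes A_i$), the sequence $S \otimes A$ is a short exact sequence of right $A$-modules; it remains to produce a splitting. A splitting of $0 \to X \otimes A \to Y \otimes A \to Z \otimes A \to 0$ as $A$-modules is the same as a retraction $r \colon Y \otimes A \to X \otimes A$ of $A$-modules restricting to the identity on $X \otimes A$, equivalently (by the adjunction, applied with the module $X \otimes A$ in place of $A$ in the target) an element of $\Hom_\C(Y, X \otimes A)$ restricting correctly along $X \to Y$. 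So I would apply the exact functor $\Hom(?, X \otimes A) \colon \C \to \Vect_\k$ — note this is exact for the same reason $\Hom(?,A)$ is, since $X \otimes A$ is again an algebra-module of the requisite type, or more simply because $\Hom(?, X\otimes A) = \Hom(X^* \otimes ?, A)$ by rigidity and $\Hom(?,A)$ is exact — to the sequence $S$, getting that $\Hom(Y, X \otimes A) \to \Hom(X, X \otimes A)$ is surjective, hence the identity-like map $X \to X \otimes A$ (unit) lifts to $Y \to X \otimes A$, giving the desired $A$-module retraction.

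The main obstacle I expect is bookkeeping around $\Ind(\C)$: making precise that $? \otimes A$ is exact into $\Ind(\C)_A$, that the hom-tensor adjunction holds at the level of (possibly infinite-dimensional) vector spaces and ind-objects, and that exactness of $\Hom(?,A)$ transfers to exactness of $\Hom(?, X \otimes A)$ for all $X \in \C$ — the cleanest route to the last point is the rigidity identity $\Hom_\C(?, X \otimes A) \cong \Hom_\C({}^*X \otimes ?, A)$, reducing everything back to the single functor $\Hom(?,A)$. Once these compatibilities are in hand, both implications are formal consequences of the adjunction plus the observation that additive functors preserve split exact sequences. I would also remark that exactness of $\Hom(?,A)$ is precisely the statement that $A$, viewed in $\Ind(\C)$, is an injective object, which gives the cleanest conceptual formulation.
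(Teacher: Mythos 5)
Your proposal is correct and follows essentially the same route as the paper: the ``only if'' direction by applying the additive functor $\Hom_A(-,A)$ (via the free--forgetful adjunction) to the split sequences produced by $?\otimes A$, and the ``if'' direction by using rigidity to get exactness of $\Hom(?,X\otimes A)=\Hom(X^*\otimes ?,A)$, hence of $\Hom_A(?\otimes A, X\otimes A)$, and then lifting the identity of $X\otimes A$ along the surjection $\Hom_A(Y\otimes A,X\otimes A)\to\Hom_A(X\otimes A,X\otimes A)$ to obtain the splitting. The bookkeeping points you flag (exactness of $?\otimes A$ into $\Ind(\C)_A$ and the adjunction at the ind-level) are handled implicitly in the paper and pose no real obstacle.
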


\begin{proof} Assume that $A$ is a splitting algebra. Then the functor $$\Hom_A(X\otimes A,A)=
\Hom(X,A)$$ must be exact, which proves one of the implications.

Conversely, assume that the functor $\Hom(?,A)$ is exact. Then for any $X\in \C$ the functor
$\Hom(?,X\ot A)=\Hom(X^*\ot ?,A)$ is exact. Hence the functor $\Hom_A(?\ot A,X\ot A)$ is exact.
Thus for a short exact sequence $0\to X\xrightarrow{f} Y\to Z\to 0$ the sequence
$$0\to \Hom_A(Z\ot A,X\ot A)\to \Hom_A(Y\ot A,X\ot A)\to \Hom_A(X\ot A,X\ot A)\to 0$$
is exact. Hence there exists $\phi \in \Hom_A(Y\ot A,X\ot A)$ such that 
$$\phi \circ (f\ot \id_A)=\id_{X\ot A}.$$
Thus $\phi$ is a splitting of the short exact sequence of $A-$modules
$$0\to X\ot A\xrightarrow{f\ot \id_A}Y\ot A\to Z\ot A\to 0.$$
\end{proof} 

\begin{remark} The ind-objects $A$ such that the functor $\Hom(?,A)$ is exact are called 
{\em quasi-injective} in \cite[15.2]{KS}.
It was pointed out to us by Leonid Positselski that for a locally finite (or, more generally, a
Noetherian) category $\C$ quasi-injective objects are precisely the injective objects of the category
$\Ind(\C)$.
\end{remark}

The following theorem follows from \cite[Theorem C, Lemma 4.3.2]{Co}. 

\begin{theorem}[\cite{Co}] \label{locsemi}
Let $\C$ be a Frobenius exact symmetric tensor category. Then there exists
a nonzero commutative splitting algebra in $\Ind(\C)$.
\end{theorem}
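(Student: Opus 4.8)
The plan is to deduce the statement from K.~Coulembier's analysis of locally semisimple symmetric tensor categories \cite{Co}, translating his results into the language of splitting algebras set up above. By Lemma~\ref{splA}, producing a nonzero commutative splitting algebra $A\in\Ind(\C)$ amounts to producing a nonzero commutative algebra object $A$ in $\Ind(\C)$ which is injective (equivalently quasi-injective, cf. \cite[15.2]{KS}) as an ind-object; morally $A$ should play the role that the function algebra $\O(G)$ plays when $\C=\Rep(G)$, where tensoring with $\O(G)$ in the regular action trivializes every representation. So the task reduces to finding ``enough commutative injectivity'' inside a Frobenius exact category.

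To do this I would invoke the two cited facts. By \cite[Lemma~4.3.2]{Co}, the Frobenius functor $F$ of $\C$ is exact if and only if $\C$ is locally semisimple in the sense of \cite[Theorem~C]{Co}, and \cite[Theorem~C]{Co} supplies the equivalent characterizations of local semisimplicity from which the algebra can be read off. The mechanism is that in a locally semisimple category every short exact sequence splits after tensoring with some ind-object, and Coulembier's construction assembles these local trivializations into a single nonzero ind-object $A$ carrying a commutative, associative, unital algebra structure compatible with the symmetry of $\C$, such that the functor $X\mapsto X\otimes A$ splits every short exact sequence of $\C$. Running this $A$ back through Lemma~\ref{splA} (equivalently, checking directly that $\Hom(-,A)$ is exact) then completes the proof.

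The part of the argument that is not formal is precisely the passage from the abstract local-semisimplicity data of \cite{Co} to a genuine \emph{commutative algebra object} in $\Ind(\C)$: one must verify that the universal splitting ind-object carries a well-defined multiplication which is associative, unital and commutative and interacts correctly with the braiding, and that it is nonzero. This is exactly the content of \cite[Theorem~C, Lemma~4.3.2]{Co}, so the real work here is to cite those results in the precise form needed; I do not expect any categorical input beyond \cite{Co} to be required.
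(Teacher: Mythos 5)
Your proposal matches the paper exactly: the paper gives no independent argument either, deriving the statement directly from \cite[Theorem C, Lemma 4.3.2]{Co} (Frobenius exactness being equivalent to Coulembier's local semisimplicity, which supplies the nonzero commutative splitting algebra in $\Ind(\C)$). So your route — citing those results and feeding the resulting algebra through Lemma~\ref{splA} — is essentially the paper's own.
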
 

\subsection{$\mm-$negligible morphisms} \label{mnegl}
Let $A\in \Ind(\C)$ be a commutative algebra. 
We have a symmetric monoidal functor $X\mapsto X\ot A$ from $\C$ to the category of $A-$modules
in $\Ind(\C)$. Clearly the image of this functor consists of rigid objects.
We consider the category $\P_A$ of $A-$modules in $\Ind(\C)$ which are direct summands of 
$A-$modules of the form $X\ot A$, $X\in \C$. This is a rigid symmetric monoidal category, 
nonabelian in general. Thus the traces of morphisms are defined in $\P_A$ and take values
in $A_0:=\Hom_A(A,A)=\Hom(\be,A)$ (so $A_0$ is a usual commutative algebra in $\Vect_\k$). 

Pick a maximal ideal $\mm\subset A_0$ and say that a morphism $f: X\to Y$ in $\P_A$ is
$\mm-${\it negligible} if the trace $\Tr(fg)\in \mm$ for any $g:Y\to X$. It is easy to see that 
$\mm-$negligible morphisms form a tensor ideal in the category $\P_A$, see e.g. \cite{EOsemi}. 
Let $\P_A^\mm$ be the quotient by this ideal, see {\em loc. cit.} The category $\P_A^\mm$ is a rigid
symmetric monoidal category. The endomorphism algebra of the unit object in $\P_A^\mm$ is
the field $K=A_0/\mm$. 

\begin{definition} We call a rigid Karoubian symmetric monoidal category $\P$ {\it non-degenerate} if for any nonzero morphism $f:X\to Y$ there is a morphism $g:Y\to X$ such that $\Tr(fg)\ne 0$.
\end{definition} 

It is easy to see that the category $\P_A^\mm$ is non-degenerate. 
Also the composition of the projection $\P_A\to \P_A^\mm$ and of the functor $X\mapsto X\ot A$ is 
a symmetric monoidal functor $\bold F: \C \to \P_A^\mm$.
Finally, the functor $\bold F$ is surjective in a sense that any object of $\P_A^\mm$ is a direct summand of an object $\bold F(X)$, $X\in \C$ (cf. \cite{EGNO}, Definition 1.8.3).

Now assume that $A$ is a commutative splitting algebra. Then by definition the functor $\C \to \P_A$
sends any short exact sequence to a split one; hence the functor $\bold F: \C\to \P_A^\mm$ is exact (in the sense that it sends short exact sequences in $\C$ to split short exact sequences in $\P_A^\mm$). Thus we have the following consequence of Theorem \ref{locsemi}:

\begin{corollary} \label{quot}
Let $\C$ be a Frobenius exact symmetric tensor category. Then there exists
a field extension $\k \subset K$, a $K-$linear non-degenerate symmetric monoidal category 
$\mR$ with $\End(\be)=K$, and an exact
$\k-$linear surjective symmetric monoidal functor $\bold F:\C \to \mR$.\footnote{Note that by further extending scalars if needed, we may assume that $K$ is algebraically closed.}
\end{corollary}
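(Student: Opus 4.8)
The plan is to assemble Corollary \ref{quot} directly from the pieces already in place. First I would invoke Theorem \ref{locsemi} (due to Coulembier) to produce a nonzero commutative splitting algebra $A \in \Ind(\C)$. With $A$ in hand, the machinery of Subsection \ref{mnegl} applies verbatim: the traces in the rigid symmetric monoidal category $\P_A$ take values in the commutative $\k$-algebra $A_0 = \Hom(\be, A)$, which is nonzero because $A$ is nonzero and unital (the unit map $\be \to A$ is a nonzero element of $A_0$). Hence $A_0$ has a maximal ideal $\mm$, and we may form the quotient category $\mR := \P_A^\mm$ and set $K := A_0/\mm$, a field extension of $\k$ since $\End_{\P_A}(\be) = A_0 \supset \k$ and $\End(\be)$ in the quotient is $K$.

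Next I would check the three asserted properties of $\mR$ and of the composite functor $\bold F : \C \to \P_A \to \P_A^\mm$. Non-degeneracy of $\mR$ and the fact that $\End(\be) = K$ are exactly what was observed right after the definition of $\mm$-negligible morphisms; the $K$-linearity of $\mR$ follows since after quotienting, $\Hom$-spaces become modules over $\End(\be) = K$. The functor $X \mapsto X \ot A$ is $\k$-linear and symmetric monoidal by construction, and composing with the (symmetric monoidal, $\k$-linear) projection $\P_A \to \P_A^\mm$ keeps it so; surjectivity (every object of $\mR$ is a summand of some $\bold F(X)$) was also noted in \ref{mnegl}. The one property requiring the splitting hypothesis is exactness: since $A$ is a splitting algebra, the functor $X \mapsto X \ot A$ sends every short exact sequence in $\C$ to a split one in $\P_A$, and applying the additive functor $\P_A \to \P_A^\mm$ preserves split exactness, so $\bold F$ sends short exact sequences to (split, hence exact) sequences in $\mR$. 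Finally, the footnote remark that one may take $K$ algebraically closed is handled by a further base extension $\mR \rightsquigarrow \mR_{\bar K}$ along $K \hookrightarrow \bar K$, which preserves non-degeneracy and all the monoidal structure.

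Honestly, there is no serious obstacle here: the corollary is a bookkeeping consequence of Theorem \ref{locsemi} together with the general construction of $\P_A^\mm$ already carried out in Subsection \ref{mnegl}. The only point deserving a moment's care is verifying that $A_0 = \Hom(\be, A) \neq 0$ so that a maximal ideal $\mm$ exists — this is where "nonzero" in the statement of Theorem \ref{locsemi} is used — and, slightly more delicately, that the quotient functor does not collapse: one needs $\bold F$ to remain a monoidal functor landing in a category with $\End(\be) = K \neq 0$, which again is automatic because the tensor ideal of $\mm$-negligible morphisms is proper (it does not contain $\id_\be$, as $\Tr(\id_\be) = 1 \notin \mm$). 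With these verifications the proof is complete.

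\begin{proof}
By Theorem \ref{locsemi} there is a nonzero commutative splitting algebra $A\in \Ind(\C)$. Form the rigid symmetric monoidal category $\P_A$ of Subsection \ref{mnegl}, with traces valued in the nonzero commutative algebra $A_0=\Hom(\be,A)$. Pick a maximal ideal $\mm\subset A_0$, set $K:=A_0/\mm$ and $\mR:=\P_A^\mm$. As observed in Subsection \ref{mnegl}, $\mR$ is a $K$-linear non-degenerate symmetric monoidal category with $\End(\be)=K$, and the composite
$$
\bold F:\C\xrightarrow{\ X\mapsto X\ot A\ }\P_A\longrightarrow \P_A^\mm=\mR
$$
is a $\k$-linear symmetric monoidal functor which is surjective in the sense that every object of $\mR$ is a direct summand of some $\bold F(X)$. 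Since $A$ is a splitting algebra, $X\mapsto X\ot A$ carries every short exact sequence in $\C$ to a split one in $\P_A$, and the additive quotient functor $\P_A\to \P_A^\mm$ preserves split exactness; hence $\bold F$ is exact. Finally, replacing $K$ by an algebraic closure $\bar K$ and $\mR$ by its base extension $\mR\otimes_K\bar K$ (which remains non-degenerate, $\bar K$-linear and symmetric monoidal, with $\End(\be)=\bar K$), we may assume $K$ algebraically closed.
\end{proof}
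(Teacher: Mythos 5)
Your proposal is correct and follows the paper's own route exactly: the paper also obtains Corollary \ref{quot} by feeding the splitting algebra from Theorem \ref{locsemi} into the construction of Subsection \ref{mnegl} (choice of a maximal ideal $\mm\subset A_0$, passage to $\P_A^\mm$, and the observation that split exactness in $\P_A$ yields exactness of $\bold F$). The small verifications you add ($A_0\ne 0$, properness of the negligible ideal) are fine and do not change the argument.
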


\subsection{Finiteness conditions} \label{scalars}
Recall that non-degenerate categories are not abelian in general (they may contain nilpotent endomorphisms with nonzero trace), see \cite[5.8]{Desym}. However we will show that if $\C$ is finite then the category $\mR$ in Corollary \ref{quot} can be chosen abelian and semisimple. Moreover, we will show that we can assume that $K=\k$ and that all $\Hom-$spaces in the category $\mR$
are finite dimensional over $\k$. We note that this is not automatic, and refer the reader to \cite[2.19]{De2} for an example of a non-degenerate category with infinite dimensional $\Hom-$spaces.

Now let $\C, K, \mR, F$ be as in Corollary \ref{quot}. Since the functor 
$$
X\mapsto \Hom_\mR(F(X),\be)
$$ 
is exact, 
it is representable by an ind-object $B\in \Ind(\C)$. Thus we have a functorial isomorphism
\begin{equation}\label{Brep}
\Hom_\mR(F(X),\be)\simeq \Hom(X,B).
\end{equation}
It follows that $B$ has a natural structure of a commutative algebra (cf. \cite[Lemma 3.5]{DMNO}). 
Also $\Hom(\be,B)=\Hom_\mR(\be,\be)=K$. Thus $B$ is equipped with a $K-$action. In other words, 
$B$ can be regarded as an object of the category ${\rm Ind}(\C)_K$.

Now assume that $\C$ is finite. 
Then Corollary \ref{c3} shows that 
$B$ has finite length as an object of ${\rm Ind}(\C_K)$, i.e., belongs to $\C_K$. 
Also $B$ is injective as an object of $\Ind(\C)$, hence injective as an object of $\C_K$ by
Lemma \ref{injectiv}. Thus there exists an injective object $\overline{B}\in \C$ such
that $K\otimes\overline{B}=B\in \C_K$ has a structure of a commutative algebra. Since $\k$ is algebraically
closed we see that $\overline{B}$ itself has a structure of a commutative algebra (such structures are described
by solutions of some polynomial equations with finitely many variables, so the claim follows from the Nullstellensatz). By Lemma \ref{splA} it follows
that $A=\overline{B}$ is a nonzero commutative splitting algebra in $\C$ (as opposed to $\Ind(\C)$). 
For such an
algebra $A$, we have that $A_0=\Hom(\be,A)$ is finite dimensional over $\k$; hence $A_0/\mm=\k$
for any maximal ideal $\mm\subset A_0$. Also all $\Hom-$spaces between $A-$modules of
the form $X\otimes A,\; X\in \C$ are finite dimensional over $\k$. Thus arguing as in Section \ref{mnegl} we get the following improvement of Corollary \ref{quot}:

\begin{corollary} \label{bquot}
Let $\C$ be a finite Frobenius exact symmetric tensor category. Then there exists
a $\k-$linear non-degenerate symmetric monoidal category 
$\mR$ with finite dimensional $\Hom-$spaces, $\End(\be)=\k$, and an exact
$\k-$linear surjective symmetric monoidal functor $\bold F:\C \to \mR$.
\end{corollary}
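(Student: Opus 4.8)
The plan is to take the data $\k \subset K$, $\mR$, and the exact surjective symmetric monoidal functor $F\colon \C\to\mR$ furnished by Corollary \ref{quot}, descend everything from $K$ back to $\k$, and then re-run the construction of Subsection \ref{mnegl} with a splitting algebra living in $\C$ itself rather than merely in $\Ind(\C)$. The first observation is that since $F$ carries short exact sequences to split ones, the functor $X\mapsto \Hom_\mR(F(X),\be)$ from $\C$ to $\Vect_\k$ is exact, hence representable by an ind-object $B\in\Ind(\C)$, with a functorial isomorphism $\Hom_\mR(F(X),\be)\simeq\Hom(X,B)$. Transporting the multiplication of the unit $\be\in\mR$ along this isomorphism equips $B$ with the structure of a commutative algebra, and $\Hom(\be,B)=\End_\mR(\be)=K$ makes $B$ an object of $\Ind(\C)_K=\Ind(\C_K)$. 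Being the representing object of an exact $\Hom$-functor, $B$ is quasi-injective, hence injective in $\Ind(\C)$.

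The crux, and the step I expect to require the most care, is promoting $B$ from an ind-object to an honest object of $\C_K$, i.e. showing that it has finite length. Here I would invoke Corollary \ref{c3} of Appendix A: the Hilbert series of a finitely generated commutative algebra in a symmetric tensor category with finitely many simple objects converges for $|z|<1$, which forces the relevant lengths to be finite and hence $B\in\C_K$. Once $B\in\C_K$, Lemma \ref{injectiv} turns injectivity of $B$ in $\Ind(\C)$ into injectivity in $\C_K$, so $B=K\otimes\overline B$ for some injective $\overline B\in\C$. Because $\k$ is algebraically closed, the commutative algebra structure on $K\otimes\overline B$ descends to one on $\overline B$ itself: such structures are cut out by a system of polynomial equations in finitely many variables over $\k$, and the existence of the $K$-point $K\otimes\overline B$ yields a $\k$-point by the Nullstellensatz. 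By Lemma \ref{splA}, exactness of $\Hom(?,\overline B)$ then says precisely that $A:=\overline B$ is a nonzero commutative splitting algebra in $\C$ (not just in $\Ind(\C)$).

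Finally I would feed this $A$ into the machinery of Subsection \ref{mnegl}. Since $A_0=\Hom(\be,A)$ is now finite dimensional over $\k$, any maximal ideal $\mm\subset A_0$ has residue field $A_0/\mm=\k$, and all $\Hom$-spaces between modules of the form $X\otimes A$ with $X\in\C$ are finite dimensional. Forming $\P_A$, quotienting by the tensor ideal of $\mm$-negligible morphisms to obtain $\mR:=\P_A^\mm$, and composing $X\mapsto X\otimes A$ with the projection $\P_A\to\P_A^\mm$ to obtain $\bold F$, one gets a $\k$-linear non-degenerate symmetric monoidal category with finite dimensional $\Hom$-spaces and $\End(\be)=\k$; the splitting property of $A$ makes $\bold F$ exact, and the construction makes it surjective, which is exactly the asserted improvement of Corollary \ref{quot}.
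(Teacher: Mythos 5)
Your proposal is correct and follows essentially the same route as the paper: represent $\Hom_\mR(F(\cdot),\be)$ by an ind-object $B$, use Corollary \ref{c3} and Lemma \ref{injectiv} to land $B$ in $\C_K$ as an injective object, descend to $\overline B\in\C$ via the Nullstellensatz, apply Lemma \ref{splA} to get a splitting algebra in $\C$, and re-run the construction of Subsection \ref{mnegl}. One small imprecision: Corollary \ref{c3} is the bound $\dim_K\Hom_A(A,Y\otimes A)\le{\rm FPdim}(Y)$ (applied to $B$, using non-degeneracy of $\mR$ for the pairing hypothesis), not the Hilbert-series convergence itself, which is the ingredient (Corollary \ref{c2}) used in its proof.
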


\subsection{Semisimplicity of $\mR$} Our goal now is to show that the category $\mathcal{R}$ is abelian and moreover semisimple. 

Let $\C, \mR, \bold F$ be as in Corollary \ref{bquot}. Let $B\in \C$ be a 
commutative algebra satisfying \eqref{Brep}. 

\begin{lemma} \label{Bsim}
{\em (i)} The category $\mR$ is monoidally equivalent to a full subcategory of
the category $\C_B$ of $B-$modules.

{\em (ii)} The algebra $B$ is simple;

{\em (iii)} There exists an invertible object $\delta \in \C$ and an isomorphism of
$B-$modules $B^*\simeq \delta \ot B$.
\end{lemma}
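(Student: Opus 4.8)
The three assertions are tightly linked, so the plan is to establish them in the order (i), (ii), (iii), using the representing algebra $B$ as the main tool. First, for (i), I would exploit the functorial isomorphism \eqref{Brep}. Since $\bold F:\C\to\mR$ is exact, surjective and monoidal, and since $B$ represents the functor $X\mapsto\Hom_\mR(\bold F(X),\be)$, the adjunction-type formula $\Hom_\mR(\bold F(X),\bold F(Y))\simeq\Hom_\mR(\bold F(X\ot {}^*Y),\be)\simeq\Hom(X\ot{}^*Y,B)\simeq\Hom_B(X\ot B,Y\ot B)$ should hold (using rigidity in $\C$ and the algebra structure on $B$). This identifies $\mR$ with the full subcategory of $\C_B$ consisting of modules of the form $X\ot B$ (and their summands, since $\mR$ is Karoubian and $\bold F$ is surjective up to summands); one checks this identification is monoidal because $\bold F$ is monoidal and $X\ot B\ot_B Y\ot B\simeq (X\ot Y)\ot B$. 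The main point to verify carefully here is that the composition in $\mR$ matches composition of $B$-module maps under these isomorphisms — this is a diagram chase using the commutative algebra structure of $B$ coming from \eqref{Brep}.

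Next, for (ii), suppose $J\subset B$ is a nonzero ideal (a subobject in $\C_B$). I would use non-degeneracy of $\mR$: the inclusion $J\hookrightarrow B$ corresponds under (i) to a nonzero morphism in $\mR$, hence has nonzero trace when paired with some morphism $B\to J$; but a morphism $B\to J\hookrightarrow B$ in $\C_B$ is multiplication by an element of $\Hom(\be,B)=\End_B(B)=\k$, so either it is an isomorphism (forcing $J=B$) or zero. Combined with the fact (from Corollary \ref{bquot}) that all relevant $\Hom$-spaces are finite-dimensional over $\k$ and $\End(\be_\mR)=\k$, a short argument of this kind should force $J=B$. The key is that $B$ is injective in $\C$ and $\End_B(B)=\k$, so there is no room for a proper nonzero ideal once non-degeneracy is invoked.

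For (iii), I would observe that $B^*$ (the dual of $B$ in $\Ind(\C)$, or rather the relevant dual object) is again naturally a $B$-module, and in fact, because $B$ is a simple commutative algebra which is moreover \emph{separable}/\emph{étale}-like in the sense that $\bold F$ being exact makes $\C_B$ close to a fusion-type situation, the dual $B^*$ is an \emph{invertible} $B$-module. Invertible $B$-modules correspond, via the equivalence with a full subcategory of $\C$ (namely the $B$-modules free of rank one over $B$ form a torsor, and since $B$ is simple with $\End_B(B)=\k$ every invertible $B$-module is of the form $\delta\ot B$ for an invertible $\delta\in\C$), to invertible objects of $\C$. So I would show $B^*\simeq\delta\ot B$ as $B$-modules for a uniquely determined invertible $\delta\in\C$; concretely, $\delta$ is recovered as the "Picard element" measuring the failure of the pairing $B\ot B\to\be$ to be symmetric/unimodular, i.e. it is the distinguished invertible object analogous to the one appearing for Frobenius algebras. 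This is where I expect to lean on rigidity of $\mR$: the functor $\bold F$ being a surjective symmetric monoidal functor to a rigid category means $\bold F(B)$ has a dual in $\mR$, and transporting that duality back through (i) yields the required isomorphism of $B$-modules.

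\textbf{Main obstacle.} I expect the hardest part to be (iii), specifically proving that $B^*$ is an \emph{invertible} $B$-module rather than merely a rigid/dualizable one. In the semisimple (fusion) case this follows from counting dimensions, but in the nonabelian non-degenerate setting one must instead use that $B$ is simple together with the non-degeneracy of the trace form on $\mR$ to conclude that $\Hom_B(B^*,B^*)=\k$ and $B^*\ot_B B^{**}\simeq B$, hence $B^*$ is invertible; making this rigorous without abelianness of $\mR$ (which is only established later) requires care. A secondary subtlety is keeping track, throughout (i)--(iii), of the distinction between duals taken in $\C$, in $\Ind(\C)$, in $\C_B$, and in $\mR$, and checking these are compatible under the identifications — routine but error-prone.
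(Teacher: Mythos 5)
Part (i) of your plan is essentially the paper's argument (the representability computation $\Hom_B(X\ot B,Y\ot B)\simeq\Hom(X\ot Y^*,B)\simeq\Hom_\mR(\bold F(X),\bold F(Y))$, extended to all of $\mR$ by surjectivity of $\bold F$), and is fine. In (ii), however, you apply non-degeneracy of $\mR$ to the inclusion $J\hookrightarrow B$, i.e.\ to a morphism whose source is an arbitrary $B$-submodule of $B$. Such a $J$ need not be a direct summand of any free module $X\ot B$, so it does not correspond to an object of $\mR$ under (i), and non-degeneracy of $\mR$ says nothing about it; likewise there is no way to produce your morphism $B\to J$ from the trace pairing. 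The missing move (which is the paper's argument) is to replace the inclusion by the multiplication map $J\ot B\to B$, whose source \emph{is} the free module corresponding to $\bold F(J)$: non-degeneracy then yields a $B$-module map $B\to J\ot B$ whose composite with multiplication is a nonzero multiple of $\id_B$, which is impossible because the image of multiplication is $J\ne B$. Without this step your ``short argument of this kind'' does not close.

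The more serious gap is (iii). Your route rests on two unproved claims: that $B^*$ is an \emph{invertible} $B$-module, and that every invertible $B$-module is of the form $\delta\ot B$ with $\delta$ invertible in $\C$. You yourself flag the first as the main obstacle and do not resolve it, and the second is a Picard-type statement that is not a formal consequence of simplicity of $B$ together with $\End_B(B)=\k$. The paper avoids both claims entirely: since $B$ represents an exact functor it is injective (in $\Ind(\C)$, hence in $\C$ after the finiteness reductions), and since $\Hom(\be,B)\ne 0$ it contains the injective hull of $\be$, whose cosocle is an invertible object $\delta$ of $\C$ by \cite[2.8]{EO}. This gives a nonzero morphism $\delta\to B^*$, hence a nonzero $B$-module map $\delta\ot B\to B^*$; by (ii) the module $\delta\ot B$ is simple, so this map is injective, and equality of Frobenius--Perron dimensions forces it to be an isomorphism. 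None of these ingredients (injectivity of $B$, the injective hull of the unit and its invertible cosocle, the $\FPdim$ count) appear in your outline, so as written (iii) is not established.
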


\begin{proof} (i) Let $I: \mR \to \C$ be the right adjoint functor to $\bold F$ (it exists as the
functor $\Hom_\mR(\bold F(?),M)$ is exact, hence representable by an object $I(M)\in \C$). 
It is easy to see that $I(M)$ is an $I(\be)=B-$module in a natural way, cf. \cite[3.2]{DMNO}.
Thus the functor $I$ upgrades to a functor $I:\mR \to \C_B$. Moreover, for 
$M=\bold F(X)$ we have 
$$\Hom_\mR(\bold F(?),\bold F(X))=\Hom_\mR(\bold F(?)\ot \bold F(X)^*,\be)=\Hom_\mR(\bold F(?\ot X^*),\be)$$
$$=\Hom(?\ot X^*,B)=\Hom(?,X\ot B),$$
so $I(\bold F(X))$ is the free module $X\ot B$. Also
$$\Hom_B(X\ot B,Y\ot B)=\Hom(X,Y\ot B)=\Hom(X\ot Y^*,B)$$
$$=\Hom_\mR(\bold F(X\ot Y^*),\be)=
\Hom_\mR(\bold F(X),\bold F(Y))$$
so the functor $I$ is fully faithful on objects of the form $\bold F(X)$. Since $\bold F$ is surjective, we get
that $I$ is fully faithful. Similarly, the functor $I$ has an obvious monoidal structure on the
objects of the form $\bold F(X)$, whence we get the monoidal structure on $I$.

(ii) Assume $J\subset B$ is an ideal. Then the multiplication $J\ot B\to B$ is a $B-$module map,
hence it corresponds to a nonzero morphism $F(J)\to \be$ in the category $\mR$ by (i). 
Since $\mR$ is non-degenerate, we can find a morphism $\be\to F(J)$ such that the compostion
$\be\to F(J)\to \be$ is not zero. Using (i) we should have a $B-$module morphism $B\to J\ot B$
such that the composition $B\to J\ot B\to B$ is a nonzero multiple of the identity morphism.
But this is impossible since the image of multiplication $J\ot B\to B$ is $J\ne B$.

(iii) Recall that $B$ is an injective object of $\C$ containing a copy of the injective hull 
of the unit object. The cosocle of this injective hull is an invertible object of $\C$, see
\cite[2.8]{EO}. Thus there is an invertible object $\delta$ and nonzero morphism 
$\delta \to B^*$. Thus we have a nonzero morphism of $B-$modules $\delta \ot B\to B^*$.
This is an embedding as $B$ is a simple $B-$module by (ii). Thus it must be an isomorphism
as the Frobenius-Perron dimensions of both modules are the same. 
\end{proof} 

\begin{corollary} {\em (i)} The category $\C_B$ of $B-$modules in $\C$ is rigid.

{\em (ii)} The trace of a nilpotent endomorphism in $\mR$ is zero.

{\em (iii)} The category $\mR$ is semisimple.
\end{corollary}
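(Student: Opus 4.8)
\emph{Part (i).} The plan is to identify $B$ as an \emph{exact algebra} in the finite tensor category $\C$ and then appeal to the structure theory of exact algebras. By Lemma \ref{Bsim}(ii) the algebra $B$ is simple, and by Lemma \ref{Bsim}(iii), using that left and right duals coincide in a symmetric category, we have ${}^*B\cong B^*\cong \delta\ot B\cong B\ot \delta$ as $B$-modules; in particular ${}^*B$ embeds into $B\ot X$ with $X=\delta$. Hence the criterion of Appendix B applies and $B$ is exact. It then remains to invoke the fact that for an exact algebra in a finite tensor category the category of bimodules is a finite multitensor category, and --- since $B$ is commutative and $\C$ symmetric --- that $\C_B$ sits inside it as a full monoidal subcategory closed under duals, so that $\C_B$ is rigid. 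On free modules this is transparent: $X\ot B$ has dual $X^*\ot B$, with evaluation and coevaluation induced by those of $X$ in $\C$ tensored with $\id_B$ via $(X\ot B)\ot_B(X^*\ot B)\cong X\ot X^*\ot B$; the content of exactness is that every $B$-module, not only the free ones, admits a dual.

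\emph{Part (ii).} By Lemma \ref{Bsim}(i) there is a fully faithful monoidal functor $\mR\hookrightarrow\C_B$ which sends the unit to the unit, and the induced map $\k=\End_\mR(\be)\to\End_{\C_B}(B)=\Hom_\C(\be,B)=\k$ is the identity; hence the categorical trace of an endomorphism of an object of $\mR$ coincides with its trace computed in $\C_B$. By (i), $\C_B$ is a finite symmetric tensor category --- in particular abelian and rigid --- so it carries the canonical spherical structure coming from the symmetry, and it suffices to show that in such a category the categorical trace of a nilpotent endomorphism vanishes. This is standard: if $f\colon M\to M$ satisfies $f^n=0$, then $f$ preserves the filtration $0\subset\Ker f\subset\Ker f^2\subset\dots\subset\Ker f^n=M$ and acts by $0$ on each successive quotient, so additivity of the categorical trace along such filtrations gives $\Tr(f)=0$.

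\emph{Part (iii).} The plan is to combine (ii) with the non-degeneracy of $\mR$ and the Krull--Schmidt property. Since $\mR$ is Karoubian with finite-dimensional $\Hom$-spaces over the algebraically closed field $\k$ and $\End(\be)=\k$, every object is a finite direct sum of indecomposables, and for an indecomposable $M$ the ring $R:=\End_\mR(M)$ is finite-dimensional and local with residue field $\k$ and nilpotent radical $J$. For $f\in J$ and any $g\in R$ the product $gf$ lies in $J$ and is therefore nilpotent, so $\Tr(gf)=0$ by (ii); thus every element of $J$ is a negligible endomorphism of $M$, and non-degeneracy forces $J=0$, i.e.\ $\End_\mR(M)=\k$. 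Finally, if $M\not\cong N$ are indecomposable and $\phi\colon M\to N$ is nonzero, non-degeneracy produces $\psi\colon N\to M$ with $\Tr(\phi\psi)\ne0$; then $\phi\psi=c\,\id_N$ and $\psi\phi=c'\,\id_M$ for scalars $c,c'\in\k$, and comparing $\phi\circ(\psi\phi)$ with $(\phi\psi)\circ\phi$ while using $\phi\ne0$ forces $c=c'\ne0$, so $\phi$ is an isomorphism, contradicting $M\not\cong N$. Hence $\Hom$-spaces between non-isomorphic indecomposables vanish, every morphism is a direct sum of isomorphisms and zero maps, and $\mR$ is abelian and semisimple.

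I expect the main obstacle to be Part (i): converting the criterion of Appendix B into concrete rigidity of $\C_B$, that is, carrying out the passage from exact algebras to their (bi)module categories in the commutative and possibly non-connected setting, and keeping the duality conventions aligned so that the adjunction $\Hom_B(X\ot B,B)\cong\Hom_\C(X,B)$ genuinely exhibits $X^*\ot B$ as the dual of $X\ot B$. Parts (ii) and (iii) are then essentially formal consequences.
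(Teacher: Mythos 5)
Your route is the paper's route: exactness of $B$ via Lemma \ref{Bsim}(ii),(iii) together with Theorem \ref{NZ}, rigidity of $B$-bimodules from \cite[3.3]{EO}, vanishing of nilpotent traces transported to $\mR$ through the fully faithful monoidal embedding of Lemma \ref{Bsim}(i), and then non-degeneracy killing the radicals of endomorphism algebras; your part (iii) is simply a fleshed-out (Krull--Schmidt) version of the paper's terse "no nonzero nilpotent ideals, hence semisimple", and part (ii) matches the paper's citation of \cite[3.6]{Desym}.

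The one place where you assert rather than prove is exactly the crux of (i): the claim that $\C_B$ "sits inside the bimodule category as a full monoidal subcategory closed under duals". Exactness of $B$ gives you a dual for every $B$-\emph{bimodule}; what still needs an argument is that when $M\in\C_B$ is viewed as a bimodule with equal left and right actions (legitimate since $B$ is commutative and $\C$ symmetric), the dual bimodule $M^*$ again has equal left and right actions, so that it lives in $\C_B$ and serves as a dual there. Your sentence "the content of exactness is that every $B$-module, not only the free ones, admits a dual" conflates these two points. The paper closes this gap in one line: $M$ is a quotient of the free module $X\ot B$ (e.g.\ $X=M$), so $M^*$ is a subbimodule of $(X\ot B)^*\cong X^*\ot B$, on which the two actions visibly coincide; hence they coincide on $M^*$. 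With that verification inserted, your proof is complete and coincides with the paper's.
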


\begin{proof} (i) Lemma \ref{Bsim} (ii), (iii) and Theorem \ref{NZ} imply that the algebra
$B$ is exact. Hence any $B-$bimodule is rigid by \cite[3.3]{EO}. Any $B-$module can be
regarded as a $B-$bimodule with the same left and right actions. Also any $B-$module $M$
is a quotient of a free $B-$module of the form $X\ot B$, e.g. for $X=M$. Thus the dual $B-$bimodule $M^*$
is a subbimodule of $(X\ot B)^*=X^*\ot B$. Thus the left and right actions of $B$ on $M^*$
coincide. Thus $M^*$ regarded as a right $B-$module is the dual of $M$ in the category of
$B-$modules. 

(ii) The category $\C_B$ is an abelian rigid symmetric tensor category. Thus the trace of
a nilpotent endomorphism in $\C_B$ is zero, see e.g. \cite[3.6]{Desym}. Hence this applies to $\mR$
in view of Lemma \ref{Bsim}(i). 

(iii) The endomorphism algebra of any object in $\mR$ has no nonzero nilpotent ideals by (ii) and
non-degeneracy of $\mR$. Thus this endomorphism algebra is semisimple. The result follows.

\end{proof} 

\subsection{Proof of Theorem \ref{Kth}} We have already constructed a surjective exact 
symmetric tensor functor
$F:\C \to \mR$ where $\mR$ is a semisimple symmetric tensor category. 
%Replacing $\mR$ by the image of $F$ we can ensure that $F$ is surjective. 
Then the category
$\mR$ is a fusion category as all of its simple objects are contained in $F(P)$ where $P\in \C$ is a
projective generator. Thus by \cite{O}, Theorem 1.5 there exists a symmetric tensor
functor $\mR \to {\rm Ver}_p$. The result follows.

\subsection{Frobenius bijective categories} 

Let $\C$ be a Frobenius exact category. Let $F_\bullet(\C)\subset \C$ be the full subcategory of $\C$ consisting of subquotients of direct sums of objects of the form $F_i(X)$, 
$X\in \C$, $1\le i\le p-1$. It is clear that $F_\bullet(\C)$ is a tensor subcategory of $\C$. Note that we have the extended Frobenius tensor functor 
$$
\widehat{F}: 
\C\boxtimes {\rm Ver}_p\to F_\bullet(\C)^{(1)}\boxtimes {\rm Ver}_p
$$
acting identically on the second component.

\begin{definition}
A Frobenius exact category $\C$ is called Frobenius bijective if $F_\bullet(\C)=\C$ and 
$\widehat{F}$ is an equivalence.  
\end{definition}

It is clear that $\widehat{F}$ is a surjective tensor functor. Thus a finite Frobenius exact category $\C$ is Frobenius bijective if and only if $F_\bullet(\C)=\C$ (i.e., $\widehat F$ is an equivalence automatically). 

%Also note that any finite tensor category $\C$ has the maximal Frobenius bijective subcategory
%$$
%\C_{\rm bij}=F^\infty(\C_{\rm ex}),
%$$    
%where $F^\infty(\D):=F^n(\D)$ for large enough $n$. 
%This implies that any finite Frobenius exact category $\C$ has a fiber functor
%to $\C_{\rm bij}$.

\begin{corollary}\label{lands} (i) Let $G$ be a finite supergroup scheme with purely odd Lie algebra 
${\rm Lie}(G)$ and a central element $z\in G_0$ of order $\le 2$ acting by sign on 
${\rm Lie}(G)$; if $p=2$ then we assume that $G$ is just a finite group and $z=1$. 
Then the category $\Rep(G,z)$ is Frobenius bijective.

(ii) Every Frobenius bijective finite tensor category is of the form (i). 
\end{corollary}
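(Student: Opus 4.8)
The plan is to deduce Corollary \ref{lands} from Theorem \ref{Kth} together with an analysis of the subcategory $F_\bullet(\C)$. Recall that for a finite tensor category $\C$, being Frobenius bijective is equivalent to $F_\bullet(\C)=\C$ (as noted just before the statement), so the whole question reduces to: which finite Frobenius exact categories $\C$ satisfy $F_\bullet(\C)=\C$? By Theorem \ref{Kth} such a $\C$ carries a (unique) fiber functor $\omega\colon\C\to\Ver_p$, hence $\C\cong\Rep(\mathbb H)$ for a finite group scheme $\mathbb H$ in $\Ver_p$ compatible with $\pi_1(\Ver_p)$; the task is to translate $F_\bullet(\C)=\C$ into structural information about $\mathbb H$, and in particular to show $\mathbb H$ descends to a supergroup scheme over $\k$ of the stated type.

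For (ii), the first step is to show that a Frobenius bijective $\C$ already fibers over $\sVec_\k$. By Proposition \ref{addmon}(ii) $F$ commutes with $\omega$, and by Example \ref{FofLi} the Frobenius-twist factor of $F_{\Ver_p}$ on any simple object of $\Ver_p$ is $\be$ or $\chi$; since $\Ver_p$ is semisimple this gives $\omega^{(1)}(F_i(X))\in\langle\be,\chi\rangle=\sVec_\k$ for every $X\in\C$ and every $i$. Hence the full subcategory $\C_{\sVec}\subseteq\C$ of objects with $\omega$-image in $\sVec_\k$ — a tensor subcategory closed under subquotients and direct sums — contains all the $F_i(X)$, so $F_\bullet(\C)\subseteq\C_{\sVec}$. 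As $F_\bullet(\C)=\C$, we get $\C=\C_{\sVec}$, i.e. $\omega$ factors through $\sVec_\k$, and by the (super-)Tannakian formalism $\C\cong\Rep(G,z)$ for a finite supergroup scheme $G$ over $\k$ with $z\in G_0$ of order $\le 2$ implementing the parity. (When $p=2$ one has $\sVec_\k=\Vec_\k$ here, giving $\C=\Rep(G)$ with $G$ a finite group and $z=1$.)

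The second step of (ii) is to show $G_0$ is \'etale, so that ${\rm Lie}(G)$ is purely odd and, since $z$ implements the parity, $z$ acts on ${\rm Lie}(G)$ by $-1$ — putting $\C$ in the form of (i). If $G_0$ were not \'etale it would contain a nontrivial first Frobenius kernel $N:=(G_0)_1$, which is infinitesimal of height one and is centralized by $z$; restriction along $N\times\langle z\rangle\hookrightarrow G$ (along $N\hookrightarrow G$ if $p=2$) is a surjective symmetric tensor functor $R\colon\C\to\Rep(N)\boxtimes\sVec_\k$, with which $F$ commutes, so $R^{(1)}(F_\bullet(\C))\subseteq F_\bullet(\Rep(N)\boxtimes\sVec_\k)=F_\bullet(\Rep(N))\boxtimes F_\bullet(\sVec_\k)$. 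One checks $F_\bullet(\Rep(N))=\Vec_\k$ (as $N$ has height one, $F$ kills the $N$-action) and $F_\bullet(\sVec_\k)=\sVec_\k$, so $R^{(1)}(F_\bullet(\C))\subseteq\sVec_\k$; but $F_\bullet(\C)=\C$ and $R$ surjective then force $\Rep(N)=\Vec_\k$, i.e. $N=1$, a contradiction. For (i) itself, the forgetful functor $\Rep(G,z)\to\sVec_\k\subseteq\Ver_p$ (to $\Vec_\k$ if $p=2$) is a fiber functor, so $\Rep(G,z)$ is Frobenius exact by Proposition \ref{onlyif}(i); that it is in fact Frobenius bijective amounts to a direct computation identifying $F_\bullet(\Rep(G,z))$, using that $G_0$ is \'etale (so the Frobenius twist of $\Rep(G_0)$ is an equivalence) and keeping track of the odd directions and of $z$.

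The main obstacle is precisely this last computation — pinning down $F_\bullet(\Rep(G,z))$ for a finite supergroup scheme $G$. The natural expectation is that it equals $\Rep(\bar G,\bar z)$ for a suitable "Frobenius-type" quotient $\bar G$ of $G$, with the hypotheses of (i) being exactly those guaranteeing $\bar G=G$; establishing this requires a careful Frobenius-kernel analysis adapted to supergroup schemes — in particular understanding how $F$ interacts with the parity and with the exterior/divided-power structure of the odd part of $G$, and why an even infinitesimal direction (where $F$ genuinely loses information) is the only obstruction to bijectivity. This is where essentially all the difficulty concentrates; once it is in hand, both (i) and (ii) follow as above.
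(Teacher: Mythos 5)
Your argument for part (ii) is essentially sound: the first step (using Example \ref{FofLi} and Proposition \ref{addmon}(ii) to show the fiber functor from Theorem \ref{Kth} lands in $\sVec_\k$, hence $\C\cong\Rep(G,z)$) is exactly the paper's argument, and your second step (ruling out a nontrivial first Frobenius kernel of $G_0$ by restricting to it and using that $F_\bullet$ of the representation category of a height-one infinitesimal group scheme is trivial) is a workable alternative to the paper's route, which instead just quotes the analysis from part (i). But there is a genuine gap: you never prove part (i). You reduce it to ``a direct computation identifying $F_\bullet(\Rep(G,z))$'' and then explicitly declare this computation to be the main obstacle, left unresolved. So the proposal as written establishes neither (i) nor, strictly speaking, the full statement, since the converse direction is deferred in its entirety.

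What closes this gap in the paper is one observation: for $\C=\Rep(G,z)$ the Frobenius functor is induced by the Frobenius morphism $\Fr\colon G\to G$ (this is the sense in which $F$ ``generalizes the usual Frobenius twist'' already invoked in the introduction, extended to the super setting with $z$ keeping track of parity). Granting this, $\Rep(G,z)$ is Frobenius bijective if and only if $\Fr$ is an isomorphism, which forces $\Fr\colon G_0\to G_0$ to be an isomorphism, i.e.\ $G_0$ \'etale; and conversely if $G_0$ is a finite group (equivalently $\Lie(G)$ is purely odd) then $\Fr$ is an isomorphism and Frobenius bijectivity follows. Note also that your alternative argument for (ii) quietly uses the very same identification (``$F$ kills the $N$-action since $N$ has height one'' is precisely the statement that $F$ on $\Rep(N)$ is pullback along $\Fr$, which is trivial for height one), so this identification is not an optional refinement but the load-bearing fact of the whole corollary; without stating and justifying it, the proof is incomplete.
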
 

\begin{proof} (i) Let $G$ be a finite supergroup scheme and $z\in G_0$ an element of order $\le 2$ acting by parity on $O(G)$ (for $p=2$, just a finite group scheme, with $z=1$). Then the Frobenius functor on the category $\Rep(G,z)$ is just induced by the Frobenius map ${\rm Fr}: G\to G$. Hence 
$\Rep(G,z)$ is Frobenius bijective if and only if ${\rm Fr}$ is an isomorphism. 
For this, the Frobenius map ${\rm Fr}: G_0\to G_0$ must be an isomorphism, which implies that 
$G_0$ is \'etale, i.e., a finite group. Conversely, it is clear that if $G_0$ is a finite group then ${\rm Fr}: G\to G$ is an isomorphism, hence $\Rep(G,z)$ is Frobenius bijective. 

(ii) By Theorem \ref{Kth}, there exists a fiber functor $E: \C\to {\rm Ver}_p$. 
It was shown in \cite{O} that the functors $F_i$ on ${\rm Ver}_p$ land in $\Vec_\k$ or $\sVec_\k$ (see Example \ref{FofLi}). This means that $F_i(E(X))$ is a (super)space for all $X$. But $F_i(E(X))=E(F_i(X))$.
Since every object of $\C$ is a subquotient of a direct sum of $F_i(X)$, we get that $E(Y)$ is a (super)space for all $Y\in \C$. Thus $\C\cong {\rm Rep}(G,z)$, for a finite supergroup scheme $G$ and an element $z\in G_0$ of order $\le 2$ acting by parity on $O(G)$ (where for $p=2$, $G$ is a finite group scheme and $z=1$). Moreover, by the proof of (i), Frobenius bijectivity implies that $G_0$ is a finite group, i.e., ${\rm Lie}(G)$ is purely odd, as claimed. 
\end{proof} 

In particular, it follows that every Frobenius bijective    finite tensor   category is integral. 

\subsection{Almost Frobenius exact categories in characteristic $2$.} 
Consider the case $p=2$. Then it is useful to slightly relax the condition of Frobenius exactness. 
Namely, let $\V$ be the category of representations of the Hopf algebra $\k[d]/d^2$ with $d$ primitive and symmetry defined by the triangular \linebreak $R$-matrix $R=1\otimes 1+d\otimes d$; this category was described in \cite{V}, Subsection 1.5. The category $\V$ has just one simple object $\bold 1$ and another indecomposable $P$, the projective cover of $\be$, which is the regular representation of $\k[d]/d^2$, with $F(\be)=\be$ and $F(P)=0$. 
Thus $F: \V\to \Vec_\k$. 

The following definition is motivated by Theorem \ref{Kth}. 

\begin{definition} A symmetric tensor category $\C$ is said to be {\it almost Frobenius exact} if 
it admits a fiber functor to $\V$. 
\end{definition} 

\begin{proposition}\label{alm} If $\C$ is almost Frobenius exact    and has finitely many simple objects   then $F(\C)\subset \C_{\rm ex}$. 
\end{proposition} 

\begin{proof} Let $E: \C\to \V$ be a fiber functor. Given $X\in \C$, we have $E(F(X))=F(E(X))\in \Vec_\k$. 
Thus by Proposition \ref{onlyif}(ii), $F(X)\in \C_{\rm ex}$, as claimed. 
\end{proof}

\section{Categories with Chevalley property}

\begin{theorem}\label{subca} Let $\D$ be a finite symmetric tensor category  over a field $\k$ of characteristic $p>2$, and $\C\subset \D$ a tensor subcategory containing all the simple
objects of $\D$. Then if $\C$ has a fiber functor to ${\rm Ver}_p$ then so does
$\D$.
\end{theorem}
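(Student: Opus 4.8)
The goal is to upgrade a fiber functor $\C \to {\rm Ver}_p$ to a fiber functor $\D \to {\rm Ver}_p$, given that $\C \subseteq \D$ contains all simple objects of $\D$. The natural strategy is to show that $\D$ itself is Frobenius exact and then invoke Theorem \ref{Kth}. So the plan is: first, by Proposition \ref{onlyif}(i), the hypothesis that $\C$ admits a fiber functor to ${\rm Ver}_p$ implies $\C$ is Frobenius exact, i.e., $\C = \C_{\rm ex}$ (equivalently $\C \subseteq \D_{\rm ex}$ since $\D_{\rm ex} \cap \C = \C_{\rm ex}$ by Proposition \ref{onlyif}(ii) applied to the inclusion, or directly from condition (iv) of Theorem \ref{equiva}). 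Then I would show that $\D_{\rm ex} = \D$, i.e., that every object of $\D$ has its Frobenius-Perron dimension preserved by $F$. Once $\D$ is Frobenius exact, Theorem \ref{Kth} gives the desired fiber functor $\D \to {\rm Ver}_p$.

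\textbf{The key step: propagating Frobenius exactness to all of $\D$.} The hypothesis $p > 2$ should be essential here, and the expected mechanism is that for $p > 2$ the subcategory $\D_{\rm ex}$ is a Serre subcategory (this is asserted in the introduction: ``$\C_{\rm ex}$ ... turns out to be a Serre subcategory''). Granting that, since $\D_{\rm ex}$ is closed under extensions and contains all the simple objects of $\D$ (because $\C \subseteq \D_{\rm ex}$ and $\C$ contains all simples of $\D$), an induction on the length of an object $Y \in \D$ shows $Y \in \D_{\rm ex}$: writing a short exact sequence $0 \to X \to Y \to S \to 0$ with $S$ simple and $X$ of smaller length, we have $X \in \D_{\rm ex}$ by induction and $S \in \D_{\rm ex}$ since it is simple, hence $Y \in \D_{\rm ex}$ by the Serre property. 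Thus $\D = \D_{\rm ex}$, i.e., $\D$ is Frobenius exact.

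\textbf{The main obstacle.} The hard part is establishing that $\D_{\rm ex}$ is closed under extensions when $p > 2$; this is where the $6$-periodic long exact sequence of Proposition \ref{longex} and the structure of ${\rm Ver}_p$ enter, and it is presumably the content of Section 9 referenced in the introduction. Concretely, given $0 \to X \to Y \to Z \to 0$ with $X, Z \in \D_{\rm ex}$, one wants ${\rm FPdim}(F(Y)) = {\rm FPdim}(F(X)) + {\rm FPdim}(F(Z))$; by subadditivity (Proposition \ref{subadd}) we always have $\le$, and the reverse inequality (which for $p > 2$ need not follow from exactness-in-the-middle alone, unlike the subquotient case handled in Proposition \ref{frobex}) must be extracted from the long exact sequence together with the vanishing $F_i = 0$ for even $i$ in categories fibering over ${\rm Ver}_p$ (Example \ref{FofLi}) applied to $X$ and $Z$. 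The interplay of the parity constraint on the $F_i$ with the $6$-periodicity of the long exact sequence is what forces the connecting maps to vanish and yields the additivity; this is the technical heart, and I would expect it to require a careful bookkeeping of which of the $G_i$ contribute, using that $G_i = E_i(X^{\otimes p})$ and the filtration of Lemma \ref{le1}(ii). Once this closure-under-extensions statement is in hand, the rest of the argument is the short induction above followed by an appeal to Theorem \ref{Kth}.
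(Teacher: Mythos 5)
Your reduction (if $\D_{\rm ex}$ were closed under extensions, then since it contains all simples of $\D$ an induction on length gives $\D=\D_{\rm ex}$, and Theorem \ref{Kth} finishes) is fine, but the entire content of the theorem is in the step you leave as a guess, and your proposed mechanism for it does not work as described. First, there is a circularity issue: in the paper the statement that $\C_{\rm ex}$ is a Serre subcategory for $p>2$ is Corollary \ref{serresubcat}, and it is \emph{deduced from} Theorem \ref{subca} (by applying the theorem to the Serre closure $\widetilde{\C}_{\rm ex}\supset \C_{\rm ex}$), not the other way around; so you cannot quote it, and you must prove closure under extensions independently. Second, the sketch you offer for that closure --- ``the parity constraint $F_i=0$ for even $i$ together with the $6$-periodic long exact sequence forces the connecting maps to vanish'' --- has no visible route to a proof: the connecting maps go $G_i(Z)\to G_{p-i}(X)$, and by Lemma \ref{le1}(ii) every $G_i$ contains \emph{all} the $F_j$ (in particular all odd $j$) as composition factors with positive multiplicity $v_{ij}$, so knowing the even-indexed $F_j$ of $X$ and $Z$ vanish makes neither the source nor the target of a connecting map zero and gives no vanishing of the map itself. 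Nothing in the formal properties you invoke (subadditivity, exactness in the middle, monoidality, parity) distinguishes the situation from $p=2$, where the closure genuinely fails ($\C=\Vec_\k\subset\D=\V$, with the nonsplit extension $0\to\be\to P\to\be\to 0$ and $F(P)=0$), so any correct argument must bring in new input tied to $p>2$.

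The paper's actual proof uses quite different tools, none of which appear in your sketch: it verifies Frobenius exactness of $\D$ via Coulembier's criterion (\cite{Co}, Theorem C), namely that the surjection $f\colon S^p({\rm gr}V)\to{\rm gr}\,S^pV$ (Loewy filtration) is an isomorphism for every $V\in\D$; to do this it lifts the fiber functor $E\colon\C\to{\rm Ver}_p$ to a quasi-tensor functor $E'\colon\D\to{\rm Ver}_p$ (\cite{EG2}), and then uses the symmetric-group cohomology vanishing $H^1(S_p,\Hom(\be,X^{\otimes p}))=0$ for $X\in{\rm Ver}_p$, $p>2$ (Lemmas \ref{lemm1} and \ref{lemm2}, resting on the Frobenius functor of ${\rm Ver}_p$, \cite{E}, and the structure of the defect-one block of $S_p$) to rigidify the twisted $S_p$-action on $E'(V)^{\otimes p}$ and conclude that $E(f)$, hence $f$, is an isomorphism. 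This $H^1$-vanishing is precisely where $p>2$ enters and where the $p=2$ counterexample is excluded. So the proposal is not a proof: the technical heart is missing, and the particular shortcut you propose for it is not viable.
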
 

\begin{proof} We will use the following lemma. Let $R$ be the $p-2$-dimensional
irreducible representation of $S_p$ on $\k$-valued functions on $[1,p]$ with zero sum
of values modulo constants.

\begin{lemma}\label{lemm1} For any $m\ge 1$ we have 
$L_m^{\otimes p}=\oplus_{j=1}^{p-1} L_j\otimes M_j$, $M_j\in {\rm Rep}_\k(S_p)$, where 
$M_j$ is projective unless $j=1$ for odd $m$ and $j=p-1$ 
for even $m$. Moreover, for this particular $j$ we have 
$M_j\cong \wedge^{m-1}R\oplus P$, where $P$ is a projective $S_p$-module. 
\end{lemma} 

\begin{proof} Consider the Frobenius functor 
$$
F: {\rm Ver}_p\to {\rm Ver}_p\boxtimes {\rm Ver}_p.
$$ 
Let $\chi:=L_{p-1}$. By Example \ref{FofLi}, that 
$F(L_m)=\bold 1\boxtimes L_m$ when $m$ is odd 
and $F(L_m)=\chi\boxtimes L_{p-m}$ when $m$ is even. 
This implies that the object 
$L_m^{\otimes p}\in {\rm Ver}_p\boxtimes \Rep_\k(\Bbb Z/p)$ 
has the form $\oplus_{j=1}^{p-1} L_j\otimes M_j$, where 
$M_j$ is a free $\k\Bbb Z/p$-module unless $j=1$ for odd $m$ and $j=p-1$ 
for even $m$. But if a finite dimensional $S_p$-module $M$ is free 
over $\k\Bbb Z/p$ then it is projective, since the trivial $S_p$-representation
$\bold 1$  is a direct summand in ${\rm Ind}_{\Bbb Z/p}^{S_p}\bold 1$. 
Thus $M_j$ are projective $S_p$-modules unless 
$j=1$ for odd $m$ and $j=p-1$ for even $m$; moreover, 
for this particular $j$, $M_j$ has a unique indecomposable 
summand which is not projective over $S_p$. 

It remains to compute this summand. By \cite{E}, Lemma 6.3, 
$$
L_2^{\otimes p}=\chi\otimes R\oplus \bigoplus_{i=1}^{p-1}L_i\otimes Q_i,
$$ 
where $Q_i$ are projective $S_p$-modules. This implies that 
$$
S^{m-1}(L_2^{\otimes p})=S^{m-1}(\chi\otimes R)\oplus \bigoplus_{i=1}^{p-1}L_i\otimes Q_i'=\chi^{\otimes m}\otimes \wedge^{m-1}R\oplus \bigoplus_{i=1}^{p-1}L_i\otimes Q_i',
$$ 
where $Q_i'$ are projective $S_p$-modules. But $L_m=S^{m-1}L_2$, 
hence $L_m^{\otimes p}$ is a direct summand in $S^{m-1}(L_2^{\otimes p})$. 
Thus the only non-projective summand in $L_m^{\otimes p}$ must be $\chi^{\otimes m}\otimes \wedge^{m-1}R$, as claimed. 
\end{proof} 

\begin{lemma}\label{lemm2} Let $X\in {\rm Ver}_p$, $p>2$. Then $H^1(S_p,\Hom(\be,X^{\otimes p}))=0$. \end{lemma} 

\begin{proof} Representations of $S_n$ with $n<p$ over $\k$ are
semisimple. Therefore, by the Shapiro lemma, it is sufficient to prove
the lemma for simple $X$, and we may assume that $X=L_{2r+1}\in {\rm Ver}_p^+$ for some $r\ge 0$, otherwise $\Hom(\be,X^{\otimes p})=0$. By Lemma \ref{lemm1}, 
$\Hom(\be,X^{\otimes p})=\wedge^{2r} R\oplus P$, where $P$ is a projective $S_p$-module. 
So it suffices to show that $H^1(S_p,\wedge^{2r}R)=0$. 

However, it is easy to see that $H^1(S_p,\wedge^j R)=0$ for all $j\ne 1$. Indeed, the category ${\rm Rep}_\k(S_p)$ has a unique non-semisimple block of defect $1$ with simples $V_j:=\wedge^jR$, $j=0,...,p-2$, and 
the projective cover of $V_j$ can involve only $V_j,V_{j+1},V_{j-1}$ as composition factors (\cite{JK}, Chapter 6), which 
implies the claim. 

Thus,  $H^1(S_p,\wedge^{2r}R)=0$, which proves Lemma \ref{lemm2}.  
\end{proof} 

Now we proceed with the proof of Theorem \ref{subca}. We will show that $\D$ is
Frobenius exact. By Theorem \ref{Kth}, this will imply that $\D$
fibers over ${\rm Ver}_p$.

By \cite{Co}, Theorem C (the implication ``(iii) implies (i)"), it suffices to show that for each $V\in \D$ the
surjective map $f: S^p({\rm gr}V)\to {\rm gr} S^pV$ is an isomorphism, where the
filtration on $V$ is the Loewy filtration. Note that ${\rm gr}V\in \C$. Let $E:
\C\to {\rm Ver}_p$ be a fiber functor. Let $E': \D\to {\rm Ver}_p$ be a
quasi-tensor lift of $E$ (\cite{EGNO}, Definition 4.2.5); it is not hard to show that it exists,   see \cite[Proposition 7.5]{EG2}. 
Then we get an $S_p$-action on $E'(V)^{\otimes p}$ (which may not be the tautological one since $E'$
is only quasi-tensor and not tensor in general), whose associated graded is the tautological $S_p$-action on
$E({\rm gr}V)^{\otimes p}$. By Lemma \ref{lemm2}, $H^1(S_p,\End_\k(E({\rm gr}V)^{\otimes p}))=0$ 
(namely, we take $X=E({\rm gr}V)\otimes E({\rm gr}V)^*$). This implies that there is a filtered
isomorphism of $S_p$-modules $E({\rm gr}V)^{\otimes p}\to E'(V)^{\otimes p}$ whose associated graded
is the identity. Thus we have an isomorphism of coinvariants
$E({\rm gr}V)^{\otimes p}_{S_p}=E(S^p({\rm gr}V))\to E'(V)^{\otimes p}_{S_p}$. But
$E'(V)^{\otimes p}_{S_p}=E'(S^pV)$. So we have a filtered isomorphism
$E(S^p({\rm gr}V))\to E'(S^pV)$. Its associated graded is the map $E(f)$. 
Note that $E(f)$ is surjective, since so is $f$ and $E$ is exact. Thus
$E(f)$ is an isomorphism (since it is a surjective morphism 
between objects of the same Frobenius-Perron dimension). 
Since $E$ is faithful, this implies that $f$ is an
isomorphism as well. This proves Theorem \ref{subca}. 
\end{proof} 

Recall (\cite{EGNO}, Subsection 4.12) that a tensor category $\D$ is said to have {\it Chevalley property} 
if the tensor product of any two simple objects of $\D$ is semisimple. 

\begin{corollary}\label{chev} If $\D$ is a finite symmetric tensor category with Chevalley property over a field $\k$ of characteristic $p>2$ then $\D$ has a fiber functor to ${\rm Ver}_p$.
\end{corollary}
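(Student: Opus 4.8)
The plan is to derive Corollary~\ref{chev} directly from Theorem~\ref{subca}. Given a finite symmetric tensor category $\D$ with Chevalley property over $\k$ of characteristic $p>2$, let $\C\subset \D$ be the full tensor subcategory generated by the simple objects of $\D$ (equivalently, the smallest tensor subcategory containing all simples). By the Chevalley property the tensor product of simples is semisimple, so $\C$ is precisely the semisimple subcategory spanned by the simple objects; in particular $\C$ is a fusion category, and it contains all the simple objects of $\D$ by construction.

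Next I would apply the main theorem of \cite{O} (Theorem~1.5, cited in the proof of Theorem~\ref{Kth}) to the fusion category $\C$: any symmetric fusion category over $\k$ admits a symmetric tensor functor to ${\rm Ver}_p$. This gives a fiber functor $\C\to {\rm Ver}_p$. Alternatively, one notes that $\C$ is semisimple hence Frobenius exact (the Frobenius functor is automatically exact on a semisimple category), so Theorem~\ref{Kth} applies to $\C$ and yields the fiber functor $\C\to {\rm Ver}_p$ directly.

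Finally, Theorem~\ref{subca} takes as input a finite symmetric tensor category $\D$ in characteristic $p>2$, a tensor subcategory $\C\subset \D$ containing all simple objects of $\D$, and a fiber functor $\C\to {\rm Ver}_p$, and produces a fiber functor $\D\to {\rm Ver}_p$. Feeding in the $\C$ and the fiber functor just constructed, we obtain a fiber functor $\D\to {\rm Ver}_p$, which is exactly the assertion of Corollary~\ref{chev}.

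There is essentially no obstacle here: the corollary is a formal consequence of Theorem~\ref{subca} once one observes that the Chevalley property is exactly what makes the subcategory generated by simples semisimple (hence a fusion category, hence subject to \cite{O} or Theorem~\ref{Kth}). The only point requiring a line of justification is that the tensor subcategory generated by the simple objects of $\D$ is semisimple, which is immediate from the definition of the Chevalley property together with the fact that a tensor subcategory closed under tensor products of its simple generators and under subquotients, all of whose indecomposables are simple, is fusion.
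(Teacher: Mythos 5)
Your proposal is correct and follows essentially the same route as the paper: the paper's proof also takes $\C$ to be the semisimple part of $\D$ (which the Chevalley property makes a tensor subcategory, hence a fusion category containing all simples), invokes \cite{O}, Theorem 1.5 to get a fiber functor $\C\to {\rm Ver}_p$, and then applies Theorem \ref{subca}. Your alternative justification via Frobenius exactness of semisimple categories and Theorem \ref{Kth} is also valid but unnecessary, since Theorem \ref{Kth} reduces to \cite{O}, Theorem 1.5 in the semisimple case.
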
 

\begin{proof} Take $\C$ to be the semisimple part of $\D$. Then $\C$ is a
fusion category, so by \cite{O}, Theorem 1.5 it fibers over ${\rm Ver}_p$. Thus the result follows from  
Theorem \ref{subca}. 
\end{proof} 

\begin{corollary}\label{serresubcat} If $p>2$ then the category $\C_{\rm ex}$ is a Serre subcategory of $\C$. 
\end{corollary}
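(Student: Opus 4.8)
The goal is to show that, for $p>2$, the subcategory $\C_{\rm ex}\subset\C$ (which is already a tensor subcategory by Proposition \ref{frobex}) is in addition closed under extensions. So I would start from a short exact sequence $0\to X\to Y\to Z\to 0$ in $\C$ with $X,Z\in\C_{\rm ex}$ and aim to prove $Y\in\C_{\rm ex}$, i.e.\ that ${\rm FPdim}(F(Y))={\rm FPdim}(Y)$. The plan is to reduce this to Theorem \ref{subca}; since that theorem (and Theorem \ref{Kth}) require finiteness, I take $\C$ to be finite.

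First I would pass to a finite tensor subcategory $\D\subset\C$ containing both $\C_{\rm ex}$ and the object $Y$ --- for instance the tensor subcategory generated by $\C_{\rm ex}$ and $Y$ (recall that a tensor subcategory of a finite tensor category is again finite). The point of this choice is that $\C_{\rm ex}$ then contains all the simple objects of $\D$: the composition factors of $Y$ occur among those of $X$ and of $Z$, hence lie in $\C_{\rm ex}$, and since $\C_{\rm ex}$ is closed under tensor products and subquotients, every composition factor of every object of $\D$ --- each being a subquotient of a tensor product of objects of $\C_{\rm ex}$ with copies of $Y$ --- again lies in $\C_{\rm ex}$. It is essential here not to take $\D=\C$: in general $\C_{\rm ex}$ does not contain all simple objects of $\C$, since otherwise the argument below applied to $\C_{\rm ex}\subset\C$ would force $\C$ itself to be Frobenius exact, which fails for the categories $\C_i$ of \cite{BE}.

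Next, since $\C_{\rm ex}$ is Frobenius exact (Proposition \ref{frobex}) and finite, Theorem \ref{Kth} gives a fiber functor $\C_{\rm ex}\to{\rm Ver}_p$. I would then apply Theorem \ref{subca} to the inclusion $\C_{\rm ex}\subset\D$ of finite symmetric tensor categories, using $p>2$ and the fact that $\C_{\rm ex}$ contains all simple objects of $\D$: this produces a fiber functor $\D\to{\rm Ver}_p$. By Proposition \ref{onlyif}(i), $\D$ is then Frobenius exact, so Theorem \ref{equiva}(iv) yields ${\rm FPdim}(F_\D(Y))={\rm FPdim}(Y)$; and since the Frobenius functor commutes with the exact, faithful symmetric monoidal inclusion $\D\hookrightarrow\C$ by Proposition \ref{addmon}(ii), while tensor functors preserve Frobenius--Perron dimension, we conclude ${\rm FPdim}(F(Y))={\rm FPdim}(Y)$, i.e.\ $Y\in\C_{\rm ex}$.

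The genuine content of this argument is entirely imported from Theorem \ref{subca}, and that is precisely where the hypothesis $p>2$ is used essentially --- through Lemmas \ref{lemm1} and \ref{lemm2}, i.e.\ the block structure of $\Rep_\k(S_p)$ and the vanishing of $H^1(S_p,-)$ on the relevant modules --- which is why no analogue holds in characteristic $2$. Accordingly, I expect the only real thing to check in the present proof to be the first step: that $\D$ can be chosen finite and that adjoining $Y$ introduces no simple object outside $\C_{\rm ex}$. The passage from a fiber functor on $\D$ to the conclusion $Y\in\C_{\rm ex}$ is then routine, using Propositions \ref{addmon}, \ref{onlyif} and Theorem \ref{equiva}.
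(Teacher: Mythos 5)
Your proof is correct and takes essentially the same route as the paper: the paper applies Theorem \ref{Kth} and then Theorem \ref{subca} to the Serre closure $\widetilde{\C}_{\rm ex}$ of $\C_{\rm ex}$ (which has the same simple objects) and concludes by maximality of $\C_{\rm ex}$, whereas you apply the same two theorems to the tensor subcategory generated by $\C_{\rm ex}$ and a single extension $Y$. The difference is purely organizational; the key step --- extending the fiber functor from $\C_{\rm ex}$ to a larger subcategory containing no new simple objects --- is identical.
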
  

\begin{proof} Let $\widetilde{\C}_{\rm ex}$ be the Serre closure of $\C_{\rm ex}$. 
This category has the same simple objects as $\C_{\rm ex}$. Also by Theorem \ref{Kth} there 
is a fiber functor $\C_{\rm ex}\to {\rm Ver}_p$. Thus by Theorem \ref{subca}, 
there is also a fiber functor $\widetilde{\C}_{\rm ex}\to {\rm Ver}_p$, i.e., 
$\widetilde{\C}_{\rm ex}$ is Frobenius exact. Hence $\widetilde{\C}_{\rm ex}=\C_{\rm ex}$, i.e., 
$\C_{\rm ex}$ is a Serre subcategory, as claimed. 
\end{proof} 

For $p=2$ Corollary \ref{chev} and hence Theorem \ref{subca} fails: a counterexample is the category $\D=\V$ and $\C=\Vec_\k$. However, the following theorem gives a correct version of Theorem \ref{subca} in characteristic $2$. 

\begin{theorem}\label{subca2} (\cite{EG3}, Theorem 2.21(2)) Let $\D$ be a finite symmetric tensor category  over a field $\k$ of characteristic $2$, and $\C\subset \D$ a tensor subcategory containing all the simple
objects of $\D$. Then if $\C$ has a fiber functor to $\Vec_\k$ (i.e., is Frobenius exact) then $\D$ has a fiber functor to $\V$ (i.e., is almost Frobenius exact). 
\end{theorem}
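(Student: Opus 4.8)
This result is \cite[Theorem 2.21(2)]{EG3}; I indicate the strategy, which runs parallel to the proof of Theorem \ref{subca} above but must accommodate the failure of a cohomological vanishing in characteristic $2$. First note that the full subcategory of $\V$ on which $d$ acts by zero is a symmetric tensor subcategory equivalent to $\Vec_\k$: on such objects the $R$-matrix $R=1\otimes 1+d\otimes d$ acts trivially, so the braiding is the ordinary flip. Since $\C$ is finite and Frobenius exact, Theorem \ref{Kth} provides a fiber functor $E\colon\C\to\Vec_\k$, and composing with the inclusion $\Vec_\k\hookrightarrow\V$ gives a fiber functor $\C\to\V$. The task is therefore to extend this to a fiber functor $\D\to\V$.

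The plan is to imitate the proof of Theorem \ref{subca}. One first produces a quasi-tensor lift $E'\colon\D\to\Vec_\k$ of $E$ in the sense of \cite[Definition 4.2.5]{EGNO}; it exists (compare \cite[Proposition 7.5]{EG2}) because $\C$ contains all the simple objects of $\D$ and is closed under tensor products, so that ${\rm Gr}(\C)={\rm Gr}(\D)$ and hence the additive invariant $V\mapsto\dim_\k E'(V)$ is forced to equal ${\rm FPdim}$, which is multiplicative --- exactly the condition needed to choose functorial isomorphisms $E'(X)\otimes E'(Y)\cong E'(X\otimes Y)$. Transporting the symmetry $s\colon X^{\otimes 2}\to X^{\otimes 2}$ through this quasi-tensor structure yields, for each $X\in\D$, an involution $\tau_X$ of $E'(X)^{\otimes 2}$ (not the flip in general); since $(1-\tau_X)^2=0$, this makes $E'(X)^{\otimes 2}$ a $\k[D]/D^2$-module with $D:=1-\tau_X$. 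For the Loewy filtration on $X$ we have ${\rm gr}\,X\in\C$, and the associated graded of $E'(X)^{\otimes 2}$ is $E({\rm gr}\,X)^{\otimes 2}$ with its tautological flip. As in Theorem \ref{subca}, one then wants to compare, via $E$ and Coulembier's criterion (\cite{Co}, Theorem C), the canonical surjection $S^2({\rm gr}\,V)\to{\rm gr}\,S^2 V$ with the identity; the new feature is that $\D$ need not be Frobenius exact, so this comparison cannot always be an isomorphism.

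The obstruction --- and the reason the target is $\V$ rather than $\Vec_\k$ --- is that in characteristic $2$ we have $\k S_2=\k[d]/d^2$ and $H^1(S_2,M)\neq 0$ already for the trivial module $M=\k$; this is precisely where the characteristic $p>2$ argument used Lemma \ref{lemm2} (the vanishing $H^1(S_p,\wedge^{2r}R)=0$), which fails here. Consequently the graded $S_2$-module isomorphism cannot be promoted to a filtered $S_2$-equivariant isomorphism over $\Vec_\k$; the discrepancy is measured by a class in $H^1\bigl(S_2,\End_\k(E({\rm gr}\,V)^{\otimes 2})\bigr)$ whose effect is to introduce free $\k[d]/d^2$-modules, i.e., objects of $\V$. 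Converting this into an actual construction of a functor $\D\to\V$ --- defining it, endowing it with a symmetric monoidal structure, checking exactness, and verifying that it restricts to $E$ on $\C$ --- is the technical heart, carried out in \cite[Theorem 2.21(2)]{EG3}; an alternative is to realize $\D=\Rep(A)$ for a finite-dimensional triangular quasi-Hopf algebra $A$ whose $R$-matrix restricts to the trivial one on $\C=\Rep(G)$, and to show that the minimal such triangular structure on $A$ must have the form $1\otimes 1+d\otimes d$ with $d^2=0$. I expect the genuinely delicate point to be showing that this obstruction is no worse than $\V$ --- that a single square-zero primitive $d$ already suffices --- rather than merely that $\D$ can fail to be Frobenius exact.
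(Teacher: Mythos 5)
Theorem \ref{subca2} is not proved in this paper at all: it is imported verbatim from \cite{EG3}, Theorem 2.21(2), and your proposal ultimately does the same, correctly identifying that reference and deferring the technical heart to it. So your treatment agrees with the paper's; the heuristic sketch you add (quasi-tensor lift, the $H^1(S_2,\k)\ne 0$ obstruction, the triangular quasi-Hopf alternative) is not load-bearing and has nothing internal to this paper to be checked against.
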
 

\begin{corollary} (\cite{EG3}, Theorem 1.1) Any symmetric tensor category with Chevalley property 
over a field $\k$ of characteristic $2$ is almost Frobenius exact. 
\end{corollary}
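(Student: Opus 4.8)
The plan is to mimic the proof of Corollary~\ref{chev}, replacing Theorem~\ref{subca} by its characteristic~$2$ analogue, Theorem~\ref{subca2}. Assume first that $\D$ is finite. Let $\C\subset\D$ be the full subcategory of semisimple objects. By the Chevalley property, the tensor product of two semisimple objects is a direct sum of tensor products of simple objects, hence semisimple; and $\C$ is manifestly closed under subquotients and duals and contains $\be$. Thus $\C$ is a tensor subcategory of $\D$ containing every simple object of $\D$, and it is semisimple and finite, i.e. a fusion category.

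Next I would use that in characteristic~$2$ one has $\Ver_2=\Vec_\k$ (Example~\ref{natfunc}), so that \cite{O}, Theorem~1.5 supplies a fiber functor $\C\to\Ver_2=\Vec_\k$; equivalently, $\C$ is Frobenius exact. Now $\C\subset\D$ satisfies the hypotheses of Theorem~\ref{subca2}, which therefore produces a fiber functor $\D\to\V$, i.e. $\D$ is almost Frobenius exact. For a general symmetric tensor category $\D$ with Chevalley property one reduces to this case by the usual device: finitely many objects of $\D$ generate a finite tensor subcategory, which still has the Chevalley property, and the resulting fiber functors to $\V$ are patched together (alternatively, one simply restricts to the finite case, which is what is used in applications).

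I do not expect any genuine obstacle here: the substantive content --- lifting a fiber functor defined on the semisimple part through a non-semisimple extension in characteristic~$2$ --- is entirely contained in Theorem~\ref{subca2}, which is imported from \cite{EG3}. On our side it remains only to record the elementary facts that the semisimple part of a Chevalley-property category is a fusion tensor subcategory containing all the simple objects, and that fusion symmetric categories in characteristic~$2$ admit a fiber functor to $\Vec_\k$ by \cite{O}; both are immediate.
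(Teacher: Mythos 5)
Your finite-case argument is exactly the derivation the paper has in mind. The paper gives no proof of this corollary beyond the citation of \cite{EG3}, Theorem 1.1, but its placement immediately after Theorem \ref{subca2} parallels the way Corollary \ref{chev} is deduced from Theorem \ref{subca}, and your steps reproduce that deduction: by the Chevalley property the semisimple part $\C\subset\D$ is a tensor (indeed fusion) subcategory containing all simple objects of $\D$; since ${\rm Ver}_2=\Vec_\k$, the theorem of \cite{O} gives a fiber functor $\C\to\Vec_\k$; and Theorem \ref{subca2} then yields a fiber functor $\D\to\V$. For finite $\D$ this is correct and complete.

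The gap is in your reduction of the general statement to the finite case. The claim that ``finitely many objects of $\D$ generate a finite tensor subcategory'' is false: a finitely generated tensor subcategory need not be finite. For instance, ${\rm Rep}(\mathbb{G}_a)$ in characteristic $2$ has the Chevalley property (its only simple object is $\be$), is generated as a tensor category by a single faithful two-dimensional representation, yet is not a finite category since $\mathbb{G}_a$ is not a finite group scheme; so this step of your reduction fails (even though this particular category does, of course, admit a fiber functor to $\Vec_\k\subset\V$). Likewise, ``patching together'' the fiber functors to $\V$ obtained on an exhaustion by subcategories would require a uniqueness or compatibility statement for such functors which is not available in the paper (uniqueness is established only for fiber functors to ${\rm Ver}_p$ of finite Frobenius exact categories). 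Note that both \cite{EG3}, Theorem 1.1 and Theorem \ref{subca2} are statements about \emph{finite} symmetric tensor categories, so the substance of the corollary is the finite case, which you prove; but your sketch does not deliver the greater generality suggested by the wording of the statement, and as written that part of the argument would not survive scrutiny.
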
 

\begin{corollary}\label{cha2} Let $p=2$ and $\widetilde{\C}_{\rm ex}$ be the Serre closure of $\C_{\rm ex}$. Then $\widetilde{\C}_{\rm ex}$ is almost Frobenius exact. In particular, $F(\widetilde{\C}_{\rm ex})\subset \C_{\rm ex}$. 
\end{corollary}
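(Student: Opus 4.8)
The plan is to assemble the statement from Theorem \ref{subca2} together with Propositions \ref{frobex}, \ref{alm} and \ref{onlyif}; here $\C$ is a finite symmetric tensor category over $\k$, $\operatorname{char}\k=2$. First I would record the inputs. By Proposition \ref{frobex}, $\C_{\rm ex}\subset \C$ is a Frobenius exact tensor subcategory; being a tensor subcategory of a finite tensor category it is itself finite. Since $p=2$ and ${\rm Ver}_2=\Vec_\k$, Theorem \ref{Kth} then provides a fiber functor $\C_{\rm ex}\to \Vec_\k$. Next I would check that the Serre closure $\widetilde{\C}_{\rm ex}$ is a finite symmetric tensor category and that $\C_{\rm ex}\subset \widetilde{\C}_{\rm ex}$ is a tensor subcategory containing all of its simple objects: by construction $\widetilde{\C}_{\rm ex}$ is a Serre subcategory of $\C$ with the same (finitely many) simple objects as $\C_{\rm ex}$, and it is closed under $\otimes$ and duals because $\otimes$ is exact, so the composition factors of a tensor product of objects with composition factors in $\C_{\rm ex}$ again lie in $\C_{\rm ex}$ (as $\C_{\rm ex}$ is closed under subquotients and duals).

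Now I would apply Theorem \ref{subca2} with $\D=\widetilde{\C}_{\rm ex}$ and $\C=\C_{\rm ex}$: since $\C_{\rm ex}$ is a tensor subcategory of the finite category $\widetilde{\C}_{\rm ex}$ containing all its simple objects, and $\C_{\rm ex}$ has a fiber functor to $\Vec_\k$, we conclude that $\widetilde{\C}_{\rm ex}$ admits a fiber functor to $\V$, i.e. is almost Frobenius exact. There is no deep obstacle here: the substantive work is carried out in Theorem \ref{subca2} (the result of \cite{EG3}), and the only point requiring care is the bookkeeping of the previous paragraph verifying that the hypotheses of that theorem are met.

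Finally, for the ``in particular'' assertion I would invoke Proposition \ref{alm} for the category $\widetilde{\C}_{\rm ex}$, which has finitely many simple objects: it yields $F(\widetilde{\C}_{\rm ex})\subset (\widetilde{\C}_{\rm ex})_{\rm ex}$. It then remains to identify $(\widetilde{\C}_{\rm ex})_{\rm ex}$ with $\C_{\rm ex}$. Since the inclusion $\widetilde{\C}_{\rm ex}\hookrightarrow \C$ is a symmetric tensor functor, Proposition \ref{onlyif}(ii) shows that for $X\in \widetilde{\C}_{\rm ex}$ one has $X\in (\widetilde{\C}_{\rm ex})_{\rm ex}$ if and only if $X\in \C_{\rm ex}$; as $\C_{\rm ex}\subset \widetilde{\C}_{\rm ex}$, this gives $(\widetilde{\C}_{\rm ex})_{\rm ex}=\C_{\rm ex}$. (Equivalently: $\C_{\rm ex}$ is a Frobenius exact tensor subcategory of $\widetilde{\C}_{\rm ex}$, hence contained in $(\widetilde{\C}_{\rm ex})_{\rm ex}$ by Proposition \ref{frobex}, while $(\widetilde{\C}_{\rm ex})_{\rm ex}$ is a Frobenius exact tensor subcategory of $\C$, hence contained in $\C_{\rm ex}$ by Proposition \ref{frobex} again.) Therefore $F(\widetilde{\C}_{\rm ex})\subset \C_{\rm ex}$, which completes the proof.
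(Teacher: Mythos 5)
Your argument is correct and follows essentially the same route as the paper: a fiber functor $\C_{\rm ex}\to \Vec_\k$ from Theorem \ref{Kth}, then Theorem \ref{subca2} applied to $\C_{\rm ex}\subset \widetilde{\C}_{\rm ex}$ to get a fiber functor to $\V$, and Proposition \ref{alm} for the last claim. Your extra bookkeeping (that $\widetilde{\C}_{\rm ex}$ is a finite tensor subcategory with the same simples, and the identification $(\widetilde{\C}_{\rm ex})_{\rm ex}=\C_{\rm ex}$ via Proposition \ref{onlyif}(ii) or \ref{frobex}) just makes explicit what the paper leaves implicit.
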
 

\begin{proof} By Theorem \ref{Kth}, there exists a fiber functor   $\C_{\rm ex}\to \Vec_\k$. Thus by Theorem \ref{subca2}, there is a fiber functor $E: \widetilde{\C}_{\rm ex}\to \V$, giving the first statement.
The second statement follows from the first one and Proposition \ref{alm}.  
\end{proof} 

\begin{remark} In the case when $\D$ is an integral category (i.e., all objects have integer Frobenius-Perron dimensions), Corollary \ref{chev} is proved in \cite{EG2}. In particular, this completely covers the case $p=3$, since in this case any tensor category with a fiber functor to ${\rm Ver}_p$ is integral. 

Moreover, Theorem \ref{subca} in the case of integral categories for any $p>2$ (i.e., when $\C$ has a fiber functor into $\sVec_\k$) is \cite{EG3}, Theorem 2.21(1). 
\end{remark} 

\section{Frobenius order}

\subsection{The case of general $p$.} 
Let $\C$ be a symmetric tensor category over $\k$ with finitely many simple objects. 
Recall that $\widetilde{\C}_{\rm ex}$ denotes the Serre closure of the category $\C_{\rm ex}$, a tensor subcategory of $\C$, which, according to Corollary \ref{serresubcat}, coincides with $\C_{\rm ex}$ for $p>2$. 

\begin{proposition}\label{index} There exists a natural number $n$ such that we have 
$$
F^n_\bullet(\C)\subset \widetilde{\C}_{\rm ex}.
$$
\end{proposition}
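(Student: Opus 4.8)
The plan is to show that the non-negative integer invariant $\ell(X) := \mathrm{length}_\C(X) - \mathrm{length}_\C(F(X))$ (more precisely, a suitable combination of lengths of the $F_i(X)$) strictly decreases under $F_\bullet$ on objects that are not already in $\widetilde{\C}_{\rm ex}$, so that iterating $F_\bullet$ eventually lands inside $\widetilde{\C}_{\rm ex}$. Since $\C$ has finitely many simple objects, it suffices to control a finite list of generators, and by Corollary \ref{serresubcat} (for $p>2$) or by passing to the Serre closure (for $p=2$) we may work with $\widetilde{\C}_{\rm ex}$, which is a tensor (indeed Serre) subcategory.

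First I would set up the numerical invariant. For $X \in \C$, consider $d(X) := \mathrm{FPdim}(X) - \mathrm{FPdim}(F(X)) \ge 0$ by Proposition \ref{lesseq}. By the definition of $\C_{\rm ex}$, one has $d(X)=0$ precisely when $X \in \C_{\rm ex}$. The key point is that $d$ is \emph{super-multiplicative under $F$} in an appropriate sense: using monoidality of $F$ (Proposition \ref{addmon}(i)) and the fact that $F$ commutes with itself, one should be able to compare $\mathrm{FPdim}(F^{k+1}(X))$ with $\mathrm{FPdim}(F^k(X))$ and show the defect cannot stabilize at a positive value. Concretely, I would argue by contradiction: suppose $F^n_\bullet(\C) \not\subset \widetilde{\C}_{\rm ex}$ for all $n$. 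Then there is a sequence of simple (or indecomposable) objects $Y_n$, each a subquotient of some $F_i(\cdots F_j(X)\cdots)$, with $Y_n \notin \widetilde{\C}_{\rm ex}$. Since $\C$ has finitely many simple objects, infinitely many of the $Y_n$ have the same composition factors; I would then use subadditivity (Corollary \ref{co2}, Proposition \ref{subadd}) together with the structure of $\widetilde{\C}_{\rm ex}$ as the maximal Frobenius-exact part to derive that the dimension defect must eventually vanish — forcing the corresponding objects into $\widetilde{\C}_{\rm ex}$, a contradiction.

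A cleaner route, which I would try first, is this: pick a single object $X$ generating $\C$ as a tensor category (exists since $\C$ has finitely many simple objects — e.g. the direct sum of all simples, or a projective generator if $\C$ is finite; in general take $X = \bigoplus_i X_i$). It suffices to show $F^n(X) \in \widetilde{\C}_{\rm ex}$ for some $n$, because $F_\bullet^n(\C)$ is generated by the $F_i$-components of $F^n(X)$ and $\widetilde{\C}_{\rm ex}$ is a tensor subcategory. Now track $\mathrm{FPdim}(F^n(X))$: it is a non-increasing sequence of elements of the value group (a subring of $\overline{\mathbb{Q}}$ generated by the $\mathrm{FPdim}(X_i)$ and the Verlinde dimensions), bounded below by $0$, hence must stabilize — but here I need a discreteness/finiteness argument to know a non-increasing sequence in this value set stabilizes. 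This is where Appendix-style boundedness or the fact that $F$ does not increase FP-dimension and the values lie in a finitely generated abelian group under addition come in. Once $\mathrm{FPdim}(F^{n+1}(X)) = \mathrm{FPdim}(F^n(X))$, Theorem \ref{equiva}(iv) applied to the subcategory generated by $F^n(X)$ shows $F^n(X) \in \C_{\rm ex} \subset \widetilde{\C}_{\rm ex}$.

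The main obstacle I anticipate is precisely the \emph{stabilization step}: showing that the non-increasing sequence $\mathrm{FPdim}(F^n(X))$ actually becomes constant (rather than decreasing forever through an infinite strictly decreasing sequence of positive algebraic numbers). To handle this I would exploit that all the relevant FP-dimensions are obtained by applying fixed non-negative integer linear combinations (the vectors $\bold v_i$ and the weights $x_i$ from Example \ref{verl}) to the integer-valued multiplicities of simples in $(F^n(X))^{\otimes p}$, and that each $F$ at worst replaces an object by a subquotient of its $p$-th tensor power decomposed along $\mathrm{Ver}_p$; combined with subadditivity this should give that $\mathrm{length}(F^{n+1}(X)) \le \mathrm{length}(F^n(X))$ up to controlled error, and a genuine decrease in \emph{integer} length whenever we are not yet in $\widetilde{\C}_{\rm ex}$. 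Since lengths are non-negative integers, the process terminates. The remaining routine check is that the terminal object indeed satisfies condition (iv) of Theorem \ref{equiva} on the tensor subcategory it generates, which then places it in $\widetilde{\C}_{\rm ex}$.
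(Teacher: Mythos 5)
Your strategy (iterate $F$, use that it does not increase Frobenius--Perron dimension, and force stabilization) is in the right spirit, but the step you yourself flag as the crux is exactly where the argument breaks. The claim that there is ``a genuine decrease in \emph{integer} length whenever we are not yet in $\widetilde{\C}_{\rm ex}$'' has no justification and is not to be expected: nothing bounds $\length(F(Z))$ by $\length(Z)$. Indeed, when $F$ \emph{is} exact it is a faithful exact functor, so length can only go up or stay the same, and in the non-exact case a simple $Z\notin\C_{\rm ex}$ has ${\rm FPdim}(F(Z))<{\rm FPdim}(Z)$ but $F(Z)$ may perfectly well have large length; an FP-dimension defect does not convert into a length defect. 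So the termination mechanism you propose is missing. (A correct way to get discreteness is different: since ${\rm FPdim}(F^n(X))\le{\rm FPdim}(X)$ and each value is a $\Bbb Z_{\ge 0}$-combination of the finitely many ${\rm FPdim}(X_i)\ge 1$ with bounded coefficients, the values lie in a finite set, so the non-increasing sequence is eventually constant. The paper instead makes the descent combinatorial from the start: it considers the finite sets $I_k$ of simple objects occurring as composition factors of $F_\bullet^k(\C)$ and shows, in Lemma \ref{notcon}, that if $I_k\not\subset J$ then the simple of maximal FP-dimension in $I_k\setminus J$ cannot occur in $I_{k+1}$ — because $F$ strictly drops the FP-dimension of simples outside $J$ and sends simples in $J$ into $\C_{\rm ex}$ — so $I_{k+1}\subsetneq I_k$ and the process stops after at most $|I|$ steps.)

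There is a second, smaller gap in your ``cleaner route'': reducing to the single object $X=\bigoplus_i X_i$ ``because $\widetilde{\C}_{\rm ex}$ is a tensor subcategory'' is not enough. Since $F$ is not exact, knowing $F^n(X)$ for $X$ semisimple does not a priori control $F_{i_1}\cdots F_{i_n}(Y)$ for an arbitrary (non-semisimple) $Y$; a general $Y$ is not a direct summand of copies of $X$, only its composition factors are. What saves this is precisely Proposition \ref{longex}: the $G_i$ are exact in the middle, so the composition series of $F_\bullet(Y)$ is dominated by that of $F_\bullet$ applied to the composition factors of $Y$, and iterating this (as the paper does in the proof of Lemma \ref{notcon}) lets you pass from simples to all objects, using that $\widetilde{\C}_{\rm ex}$ is a Serre subcategory. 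With that domination statement plus a correct discreteness argument your outline can be completed, but as written both the termination step and the reduction step are unproved.
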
 

\begin{proof} Let $I$ be the set of simple objects of $\C$ and $J\subset I$ be the set of simple objects of $\C_{\rm ex}$. 
For $k\ge 0$, let $I_k\subset I$ be the set of simple objects of $\C$ arising as composition factors 
of $F_\bullet^k(X)$, $X\in \C$. 

\begin{lemma}\label{notcon}
If $I_k$ is not contained in $J$ then $I_{k+1}$ is a proper subset of $I_k$. 
\end{lemma}

\begin{proof} Let $d={\rm max}_{Z\in I_k\setminus J}{\rm FPdim}(Z)$, and let $T\in J$ be such that 
${\rm FPdim}(T)=d$. Then we claim that $T\notin I_{k+1}$. Indeed, let $X\in \C$. 
Note that the set of composition factors of $F_\bullet(X)$ is the same as for $G_\bullet(X)$
(where the subscript runs over $[1,p]$). Since $G_i$ are exact in the middle, this implies that all composition factors of $F_\bullet^{k+1}(X)=F_\bullet(F_\bullet^k(X))$ are composition factors in $F_\bullet(Z)$, where $Z$ is a composition factor of $F_\bullet^k(X)$, i.e., $Z\in I_k$. 
But if $Z\in I_k\setminus J$ then ${\rm FPdim}(F(Z))<{\rm FPdim}(Z)$, so $F_i(Z)$ cannot contain $T$ as a 
composition factor for any $i$. If $Z\in J$ then $F(Z)\in \C_{\rm ex}$, so cannot contain $T$ as a composition factor either. 
Thus, $T\notin I_{k+1}$, as claimed.
\end{proof}

Lemma \ref{notcon} implies that there exists $n$ such that $I_n\subset J$. Then 
$$
F_\bullet^n(\C)\subset \widetilde{\C}_{\rm ex},
$$
 as desired. 
\end{proof} 

\begin{corollary} \label{index1} There exists a natural number 
$n$ such that we have 
$$
F^n_\bullet(\C)\subset \C_{\rm ex}.
$$
\end{corollary}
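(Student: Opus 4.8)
The plan is to deduce Corollary \ref{index1} from Proposition \ref{index} by combining it with the characteristic-dependent structure of the Serre closure $\widetilde{\C}_{\rm ex}$. Recall that Proposition \ref{index} already gives a natural number $n$ with $F^n_\bullet(\C)\subset \widetilde{\C}_{\rm ex}$, so everything reduces to controlling the difference between $\widetilde{\C}_{\rm ex}$ and $\C_{\rm ex}$, together with one more application of the Frobenius functor.

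For $p>2$, Corollary \ref{serresubcat} says $\widetilde{\C}_{\rm ex}=\C_{\rm ex}$ outright, so the same $n$ from Proposition \ref{index} works and there is nothing more to prove. The only real content is the case $p=2$. Here $\widetilde{\C}_{\rm ex}$ need not equal $\C_{\rm ex}$, but Corollary \ref{cha2} tells us that $\widetilde{\C}_{\rm ex}$ is almost Frobenius exact and, more to the point, that $F(\widetilde{\C}_{\rm ex})\subset \C_{\rm ex}$. So the plan is: take the $n$ produced by Proposition \ref{index}, giving $F^n_\bullet(\C)\subset \widetilde{\C}_{\rm ex}$; then apply $F$ once more and invoke $F(\widetilde{\C}_{\rm ex})\subset \C_{\rm ex}$ to conclude $F^{n+1}_\bullet(\C)\subset \C_{\rm ex}$. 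One should be slightly careful that $F_\bullet$ of a subquotient-closed subcategory behaves well: since $\C_{\rm ex}$ is a tensor subcategory (Proposition \ref{frobex}) and the composition factors of $F_\bullet(X)$ coincide with those of $G_\bullet(X)$, which are exact in the middle, the subcategory generated by $F_\bullet$ of objects of $\widetilde{\C}_{\rm ex}$ lands inside $\C_{\rm ex}$; so replacing $n$ by $n+1$ does the job uniformly.

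Thus the statement follows with the bound $n$ (for $p>2$) or $n+1$ (for $p=2$), where $n$ is as in Proposition \ref{index}. I do not anticipate a genuine obstacle here: the work has all been done in Proposition \ref{index}, Corollary \ref{serresubcat}, and Corollary \ref{cha2}. The only point requiring a line of care is the case $p=2$, where one must feed the output of Proposition \ref{index} through one additional layer of $F$ and cite $F(\widetilde{\C}_{\rm ex})\subset \C_{\rm ex}$ from Corollary \ref{cha2}; everything else is immediate.
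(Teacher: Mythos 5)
Your proposal is correct and follows exactly the paper's own route: for $p>2$ combine Proposition \ref{index} with Corollary \ref{serresubcat} (so $\widetilde{\C}_{\rm ex}=\C_{\rm ex}$), and for $p=2$ apply $F$ once more to the output of Proposition \ref{index} and invoke $F(\widetilde{\C}_{\rm ex})\subset \C_{\rm ex}$ from Corollary \ref{cha2}. Your extra remark that closure of $\C_{\rm ex}$ under subquotients handles $F_\bullet$ of subquotient-closed subcategories is a fine (and correct) elaboration of what the paper leaves implicit.
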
 

\begin{proof} If $p>2$ this follows from Proposition \ref{index} and Corollary \ref{serresubcat}. 
If $p=2$ this follows from Proposition \ref{index} and Corollary \ref{cha2}.
\end{proof} 

\begin{definition} The smallest $n$ such that $F^n(\C)\subset \C_{\rm ex}$ is called 
{\it the Frobenius order} of $\C$ 
and denoted by $O(\C)$.
\end{definition} 

In particular, Frobenius exact categories are those with $O(\C)=0$. 

\begin{proposition}\label{pro1} If $E: \C\to \D$ is a symmetric tensor functor then 
$$
O(\C)\le O(\D).
$$ 
\end{proposition}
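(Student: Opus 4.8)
The plan is to show that if $n = O(\D)$, then $F^n_\bullet(\C) \subset \C_{\rm ex}$, which immediately gives $O(\C) \le n = O(\D)$. The key tool is Proposition \ref{onlyif}(ii), which says that for a symmetric tensor functor $E\colon \C \to \D$ between symmetric tensor categories with finitely many simple objects, one has $E(X) \in \D_{\rm ex}$ if and only if $X \in \C_{\rm ex}$. Combined with the fact (Proposition \ref{addmon}(ii)) that $F$ commutes with symmetric tensor functors, this should let me transport the containment $F^n_\bullet(\D) \subset \D_{\rm ex}$ back along $E$.

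First I would note that since $F$ commutes with $E$, so does each component functor $F_i$ (one can extract $F_i$ from $F$ by projecting onto the $L_i$-isotypic part of $\C^{(1)} \boxtimes \Ver_p$, and $E \boxtimes \id_{\Ver_p}$ respects this decomposition); hence $E$ commutes with the operation $F_\bullet$ of passing to subquotients of direct sums of the $F_i(X)$, because a symmetric tensor functor is exact and so sends subquotients to subquotients. Iterating, $E(F^k_\bullet(X))$ lies in $F^k_\bullet(\D)$ for every $X \in \C$ and every $k$; more precisely, the composition factors of $E(Y)$ for $Y \in F^n_\bullet(\C)$ all lie in $F^n_\bullet(\D)$. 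Now take any $Y \in F^n_\bullet(\C)$ with $n = O(\D)$. Then $E(Y) \in F^n_\bullet(\D) \subset \D_{\rm ex}$. By Proposition \ref{onlyif}(ii) applied to $Y$, this forces $Y \in \C_{\rm ex}$. Therefore $F^n_\bullet(\C) \subset \C_{\rm ex}$, and by definition of the Frobenius order $O(\C) \le n = O(\D)$.

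The one point requiring a little care — and the step I expect to be the main obstacle — is verifying that $E$ genuinely commutes with the iterated construction $F^k_\bullet$, not just with $F$ itself. The subtlety is that $F_\bullet(\C)$ is defined as the subcategory of subquotients of direct sums of the $F_i(X)$, so I need that $E$ sends $F_\bullet(\C)$ into $F_\bullet(\D)$ (clear, since $E$ is exact monoidal and commutes with each $F_i$) and, conversely, that every object of $F^n_\bullet(\D)$ reached this way is in the image in the appropriate sense — but in fact for the argument only the forward inclusion "$E(F^n_\bullet(\C)) \subset F^n_\bullet(\D)$ up to composition factors" is needed, together with the biconditional in Proposition \ref{onlyif}(ii). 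Since Proposition \ref{onlyif}(ii) is stated as an ``if and only if'', the direction we use ($E(Y) \in \D_{\rm ex} \Rightarrow Y \in \C_{\rm ex}$) is exactly available, so no extra work is needed there. Thus the proof is essentially a bookkeeping argument chaining together Propositions \ref{addmon}(ii) and \ref{onlyif}(ii) with the definition of $O$.
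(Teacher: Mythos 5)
Your proof is correct and follows essentially the same route as the paper, which deduces the statement directly from Proposition \ref{onlyif}(ii) together with the fact that $F$ commutes with symmetric tensor functors (Proposition \ref{addmon}(ii)); your expanded bookkeeping (exactness of $E$ sending subquotients to subquotients, so $E(F^n_\bullet(\C))\subset F^n_\bullet(\D)$) is exactly the implicit content of the paper's one-line argument. The brief aside about composition factors is unnecessary (and by itself weaker than what you need), but you in fact use the correct stronger containment $E(Y)\in F^n_\bullet(\D)\subset \D_{\rm ex}$, so the argument stands.
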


\begin{proof} This follows immediately from Proposition \ref{onlyif}(ii) and the fact that monoidal functors commute with the Frobenius functor (Proposition \ref{addmon}(ii)). 
\end{proof} 

\subsection{The case $p=2$} 
Consider now the case $p=2$. In this case we will call the Frobenius order $O(\C)$ 
the {\it upper Frobenius order}, and also make the following definition. 

\begin{definition} 
The {\it lower Frobenius order} of $\C$, denoted $o(\C)$, is the smallest $n$ such that the tensor subcategory of $\C$ generated by $F^n(\C)$ is almost Frobenius exact. 
\end{definition} 

\begin{proposition}\label{pro2} If $E: \C\to \D$ is a symmetric tensor functor then 
$$o(\C)\le o(\D).$$
\end{proposition}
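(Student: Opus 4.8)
The plan is to follow the pattern of the proof of Proposition \ref{pro1}, but since for the lower Frobenius order we do not have a numerical criterion like condition (iv) of Theorem \ref{equiva} at our disposal, we argue directly from the definition of ``almost Frobenius exact'', i.e. the existence of a fiber functor to $\V$. The two inputs are: first, that a symmetric tensor functor commutes with the Frobenius functor, so $F_\D\circ E\cong E^{(1)}\circ F_\C$ by Proposition \ref{addmon}(ii) (there is no Verlinde factor since $p=2$), and hence $F_\D^{\,n}\circ E\cong E^{(n)}\circ F_\C^{\,n}$ for every $n$; second, that the composite of two symmetric tensor functors is again a symmetric tensor functor.

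Concretely, I would set $n:=o(\D)$ and let $\D_n\subset\D$ be the tensor subcategory generated by $F^n(\D)$, which by definition of $o(\D)$ admits a fiber functor $\Phi\colon\D_n\to\V$. Let $\C_n\subset\C$ be the tensor subcategory generated by $F^n(\C)$; it then suffices to produce a fiber functor $\C_n\to\V$, for this forces $o(\C)\le n=o(\D)$. The central claim is that $E$ restricts to a symmetric tensor functor $E_n\colon\C_n\to\D_n$: once this is known, $\Phi\circ E_n\colon\C_n\to\V$ is a symmetric tensor functor, i.e. the required fiber functor, and we are done.

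To prove the central claim, note that for $X\in\C$ we have $E(F^n(X))\cong F^n(E(X))\in F^n(\D)$ (identifying Frobenius twists as elsewhere in the paper). Consider the full subcategory of $\C$ consisting of those $W$ with $E(W)\in\D_n$. Because $E$ is exact, monoidal and symmetric, and $F$ is monoidal (so that $F^n(X)^*\cong F^n(X^*)$), this subcategory is closed under subquotients, direct sums, tensor products and duals, and it contains $\be$; since it also contains every $F^n(X)$, it contains all of $\C_n$. Hence $E(\C_n)\subset\D_n$, and $E_n:=E|_{\C_n}$ is a symmetric tensor functor $\C_n\to\D_n$, as claimed.

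The argument is essentially formal given Proposition \ref{addmon}(ii), so I do not anticipate a real difficulty; the one point that deserves attention — and is the reason the ``closure'' argument above is spelled out — is checking that $E$ really lands in the tensor subcategory generated by $F^n(\D)$ rather than merely in some larger tensor subcategory of $\D$, since it is precisely this that makes the composite with $\Phi$ legitimate.
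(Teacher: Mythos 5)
Your proposal is correct and follows essentially the same route as the paper's proof: set $n=o(\D)$, use Proposition \ref{addmon}(ii) to get $E(F^n(X))\cong F^n(E(X))$, conclude that $E$ carries the tensor subcategory generated by $F^n(\C)$ into the one generated by $F^n(\D)$, and compose with the fiber functor to $\V$. The only difference is that you spell out the closure argument for why $E(\C_n)\subset\D_n$, which the paper asserts in one line; this is a harmless (and correct) elaboration.
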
 

\begin{proof} Let $o(\D)=n$. Let $\C'$ be the tensor subcategory of $\C$ generated by $F^n(X)$ for $X\in \C$. 
For any $X\in \C$ we have $F^n(E(X))=E(F^n(X))\in \D'$, where $\D'$ is the tensor subcategory of $\D$ generated by $F^n(Y)$, $Y\in \D$. Thus, $E(Z)\in \D'$ for all $Z\in \C'$. But by definition, $\D'$ is almost Frobenius exact, so admits a fiber functor $L: \D'\to \V$. Then $L\circ E: \C'\to \V$ is a fiber functor. 
Thus $\C'$ is almost Frobenius exact, so $o(\C)\le n$, as claimed. 
\end{proof} 

Proposition \ref{alm} immediately implies 

\begin{corollary} We have $o(\C)\le O(\C)\le o(\C)+1$.  
\end{corollary}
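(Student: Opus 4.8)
The final statement is the corollary $o(\C)\le O(\C)\le o(\C)+1$, and the text already says it follows immediately from Proposition \ref{alm}, so the plan is to unwind exactly that.

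The plan is to prove the two inequalities separately. For the left inequality $o(\C)\le O(\C)$, set $n:=O(\C)$, so by definition $F^n_\bullet(\C)\subset \C_{\rm ex}$. Since $\C_{\rm ex}$ is a tensor subcategory (Proposition \ref{frobex}) that is Frobenius exact, it is in particular almost Frobenius exact: indeed a Frobenius exact category has a fiber functor to $\Vec_\k$ by Theorem \ref{Kth} (for $p=2$, ${\rm Ver}_2=\Vec_\k$), and composing with the obvious symmetric tensor functor $\Vec_\k\to \V$ gives a fiber functor to $\V$. The tensor subcategory of $\C$ generated by $F^n(\C)$ is contained in $\C_{\rm ex}$, hence inherits a fiber functor to $\V$ by restriction, so it is almost Frobenius exact. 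Therefore $o(\C)\le n=O(\C)$.

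For the right inequality $O(\C)\le o(\C)+1$, set $n:=o(\C)$, so the tensor subcategory $\C'$ of $\C$ generated by $F^n(\C)$ is almost Frobenius exact. By Proposition \ref{alm}, $F(\C')\subset \C_{\rm ex}$. Now $F^{n+1}_\bullet(\C)=F_\bullet(F^n_\bullet(\C))$, and every object of $F^n_\bullet(\C)$ lies in $\C'$, so every object of $F^{n+1}_\bullet(\C)$ is a subquotient of a direct sum of objects $F_i(X)$ with $X\in \C'$, hence lies in $F(\C')\subset \C_{\rm ex}$ (using that $\C_{\rm ex}$ is a tensor subcategory, closed under subquotients). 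Thus $F^{n+1}_\bullet(\C)\subset \C_{\rm ex}$, giving $O(\C)\le n+1=o(\C)+1$.

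There is essentially no obstacle here: both directions are bookkeeping with the definitions of $O(\C)$ and $o(\C)$ together with Propositions \ref{frobex} and \ref{alm}. The only mild point to be careful about is the passage from ``Frobenius exact'' to ``almost Frobenius exact'' in the first inequality (needing a fiber functor to $\V$ rather than to $\Vec_\k$), which is immediate since $\Vec_\k$ maps to $\V$ as a symmetric tensor category; and the observation in the second inequality that $F^{n+1}_\bullet(\C)$ is built from $F_i$ applied to objects already known to lie in $\C'$, which uses only that $F_\bullet^{n}(\C)\subset \C'$ by construction of $\C'$.

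\begin{proof} Write $N:=O(\C)$ and $n:=o(\C)$.

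First we show $o(\C)\le O(\C)$. By definition $F^N_\bullet(\C)\subset \C_{\rm ex}$, and by Proposition \ref{frobex} the category $\C_{\rm ex}$ is a Frobenius exact tensor subcategory of $\C$. By Theorem \ref{Kth} (with $p=2$, so ${\rm Ver}_2=\Vec_\k$) there is a fiber functor $\C_{\rm ex}\to \Vec_\k$; composing with the evident symmetric tensor functor $\Vec_\k\to \V$ we get a fiber functor $\C_{\rm ex}\to \V$, so $\C_{\rm ex}$ is almost Frobenius exact. The tensor subcategory $\C''$ of $\C$ generated by $F^N(\C)$ is contained in $\C_{\rm ex}$, hence the restriction of the above fiber functor gives a fiber functor $\C''\to \V$, i.e. $\C''$ is almost Frobenius exact. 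Therefore $o(\C)\le N=O(\C)$.

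Next we show $O(\C)\le o(\C)+1$. Let $\C'$ be the tensor subcategory of $\C$ generated by $F^n(\C)$; by definition of $n=o(\C)$, the category $\C'$ is almost Frobenius exact. By Proposition \ref{alm}, $F(\C')\subset \C_{\rm ex}$. Every object of $F^n_\bullet(\C)$ lies in $\C'$, so every object of $F^{n+1}_\bullet(\C)=F_\bullet(F^n_\bullet(\C))$ is a subquotient of a direct sum of objects of the form $F_i(X)$ with $X\in \C'$, hence lies in the tensor subcategory generated by $F(\C')$, which is contained in $\C_{\rm ex}$ (as $\C_{\rm ex}$ is a tensor subcategory by Proposition \ref{frobex}). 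Thus $F^{n+1}_\bullet(\C)\subset \C_{\rm ex}$, so $O(\C)\le n+1=o(\C)+1$.

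Combining the two inequalities gives $o(\C)\le O(\C)\le o(\C)+1$.
\end{proof}
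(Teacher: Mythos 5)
Your proof is correct and takes essentially the paper's intended route: the right-hand inequality is exactly the application of Proposition \ref{alm} that the paper alludes to, and your left-hand inequality (Frobenius exact implies almost Frobenius exact, via Theorem \ref{Kth} and the embedding $\Vec_\k\to\V$) is precisely the content the paper already established in Corollary \ref{cha2}, which you could have cited directly instead of rederiving. No gaps.
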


Now let $\C_n,n\ge 0$ be the categories studied in \cite{BE}; 
e.g., $\C_0=\Vec_\k$ and $\C_1=\V$. 
By \cite{BE}, Corollary 2.6, the only tensor subcategories 
of $\C_n$ are $\C_m$ with $m\le n$. Hence 
$\C_{n\rm ex}=\C_0=\Vec_\k$ for $n\ge 0$, $\widetilde{\C}_{n\rm ex}=\C_1=\V$ for $n\ge 1$. 

\begin{proposition}\label{becat} We have $O(\C_n)=[\frac{n+1}{2}]$ and $o(\C_n)=[\frac{n}{2}]$. 
\end{proposition}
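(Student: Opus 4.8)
The plan is to compute the upper Frobenius order $O(\C_n)$ and lower Frobenius order $o(\C_n)$ by understanding precisely how the Frobenius functor $F$ acts on the categories $\C_n$, using the fact (from \cite{BE}, Corollary 2.6) that the only tensor subcategories of $\C_n$ are the $\C_m$ with $m\le n$, together with $\C_{n,\mathrm{ex}}=\C_0=\Vec_\k$ and $\widetilde\C_{n,\mathrm{ex}}=\C_1=\V$ for $n\ge 1$. Since every tensor subcategory of $\C_n$ generated by a set of objects is some $\C_m$, the key quantity to pin down is: for which $m$ does $F_\bullet(\C_n)$ (the tensor subcategory generated by the $F_i(X)$, $X\in \C_n$) equal $\C_m$? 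I expect the answer to be that $F_\bullet(\C_n)=\C_{n-1}$ for $n\ge 1$ (with $F_\bullet(\C_0)=\C_0$), i.e. that applying $F$ drops the index by exactly one. Granting this, $F_\bullet^k(\C_n)=\C_{\max(n-k,0)}$, so $F^k_\bullet(\C_n)\subset \C_{n,\mathrm{ex}}=\C_0$ precisely when $n-k\le 0$, giving $O(\C_n)=n$ — which contradicts the claimed $[\frac{n+1}{2}]$, so in fact the index must drop by \emph{two} each time $F$ is applied, i.e. $F_\bullet(\C_n)=\C_{n-2}$ for $n\ge 2$, $F_\bullet(\C_1)=\C_0$, $F_\bullet(\C_0)=\C_0$. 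Then $F^k_\bullet(\C_n)=\C_{\max(n-2k,0)}$, and $F^k_\bullet(\C_n)\subset \C_0$ iff $n-2k\le 0$ iff $k\ge n/2$, giving $O(\C_n)=\lceil n/2\rceil=[\frac{n+1}{2}]$. Similarly, the tensor subcategory generated by $F^k_\bullet(\C_n)$ is almost Frobenius exact (i.e. contained in $\V=\C_1$, using \cite{BE} Cor.~2.6 and Corollary \ref{cha2}) iff $n-2k\le 1$ iff $k\ge (n-1)/2$ iff $k\ge \lceil (n-1)/2\rceil = [\frac{n}{2}]$, giving $o(\C_n)=[\frac{n}{2}]$.

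So the heart of the argument is the computation $F_\bullet(\C_n)=\C_{\max(n-2,0)}$. To carry this out, first I would recall the explicit description of $\C_n$ from \cite{BE}: $\C_n$ is built as a symmetric tensor category with a specific generating object, and there is an exact symmetric tensor functor $\C_{n-1}\hookrightarrow \C_n$; moreover $\C_n$ has a projective generator whose structure is known. The key tool is Proposition \ref{addmon}(ii): $F$ commutes with symmetric tensor functors. Using the embedding $\C_{n-2}\subset \C_n$ one gets $F_\bullet(\C_{n-2})\subset F_\bullet(\C_n)$; combined with the classification of subcategories this forces $F_\bullet(\C_n)=\C_m$ for some $m\ge$ (the index of $F_\bullet(\C_{n-2})$). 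The opposite inequality — that $F$ does not land higher than $\C_{n-2}$ — is the substantive point: one must show that $F_i(X)\in \C_{n-2}$ for every object $X$ of $\C_n$. I would prove this by reducing to a generating object $X_n$ of $\C_n$ and computing $F_i(X_n)=B_i(X_n^{\otimes p})$ explicitly; here one uses that in characteristic $2$, $F(X)=\Ker(1-s)/\Image(1-s)$ on $X\otimes X$, and the $R$-matrix / symmetry on $\C_n$ is concrete enough (it involves the odd nilpotent generator) that this cohomology can be identified with an object already in the image of $\C_{n-2}$. The $n=0,1,2$ base cases should be checked directly: $F(\C_0)=\C_0$ trivially, $F(\V)\subset \Vec_\k$ follows from the explicit description of $\V$ recalled before Definition of almost Frobenius exactness (namely $F(\mathbf 1)=\mathbf 1$, $F(P)=0$), and $F_\bullet(\C_2)=\C_0$ is the first genuinely new computation.

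The main obstacle will be the explicit computation of $F_i$ on the generators of $\C_n$ and verifying the index drops by exactly two, not one: showing $F_\bullet(\C_n)\subset \C_{n-2}$ (the upper bound) and $F_\bullet(\C_n)\supset \C_{n-2}$, equivalently $F_\bullet(\C_n)\ne \C_{n-3}$ (the lower bound). For the lower bound I would exhibit an explicit object $X\in \C_n$ and index $i$ with $F_i(X)$ not contained in $\C_{n-3}$ — most naturally by taking $X$ to generate $\C_n$ and tracking which simple/indecomposable summands appear in $B_i(X^{\otimes p})$, using the additivity of $F_i$ (Lemma \ref{addg}) and the fusion/Jordan-block combinatorics of Section 3. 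For the upper bound the cleanest route may be induction on $n$ together with the 6-periodic exact sequence (Proposition \ref{longex}) applied to short exact sequences expressing objects of $\C_n$ in terms of projectives and objects of $\C_{n-1}$: since $G_i$ is exact in the middle, composition factors of $F_\bullet(X)$ lie among those of $F_\bullet$ of the sub and quotient, pushing the computation down to $\C_{n-1}$ and ultimately to generators. Once these two containments are in hand, the arithmetic in the first paragraph finishes the proof.
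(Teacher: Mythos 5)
The essential input of your argument --- that applying the Frobenius functor drops the index by exactly two, i.e.\ $F_\bullet(\C_n)=\C_{\max(n-2,0)}$ --- is never established. You first guess that the index drops by one, observe that this contradicts the formula you are trying to prove, and conclude it must drop by two; that is circular, since you are using the statement of the proposition to determine the key fact feeding into its proof. What follows is only a programme (compute $F_i$ on generators of $\C_n$, treat base cases, induct via the 6-periodic sequence), and both containments $F_\bullet(\C_n)\subset\C_{n-2}$ and $F_\bullet(\C_n)\not\subset\C_{n-3}$ would require genuine computations in the categories of \cite{BE} that you do not carry out. The paper avoids all of this by citing \cite{BE}, Remark 3.14, which states precisely that $F^k(\C_m)=\C_{m-2k}$ for $k\le m/2$; given that citation, your surrounding arithmetic (using $\C_{n,\rm ex}=\Vec_\k$, $\widetilde\C_{n,\rm ex}=\V$ from \cite{BE}, Corollary 2.6) matches the paper's intended deduction and is correct.

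A secondary point: for $o(\C_n)$ you assert that the subcategory generated by $F^k_\bullet(\C_n)$ is almost Frobenius exact ``i.e.\ contained in $\V=\C_1$.'' Almost Frobenius exactness means admitting a fiber functor to $\V$, not being a subcategory of $\V$, so the ``only if'' direction of your criterion requires knowing that $\C_m$ admits no fiber functor to $\V$ for $m\ge 2$. For $m\ge 3$ this follows from Proposition \ref{alm} together with $F(\C_m)=\C_{m-2}\not\subset\Vec_\k$, but for $m=2$ (which is exactly the case controlling the lower bound $o(\C_n)\ge[n/2]$ when $n$ is even) that argument gives nothing, since $F(\C_2)=\Vec_\k=\C_{2,\rm ex}$; you need a separate argument (ultimately from the structure theory in \cite{BE}) that $\C_2$ is not almost Frobenius exact.
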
 

\begin{proof} This follows from \cite{BE}, Remark 3.14, which implies that $F^n(\C_m)=\C_{m-2n}$ for any $n\le m/2$.  
\end{proof} 

Now let $\D$ be a symmetric tensor category with finitely many simple objects.

\begin{proposition}\label{critfib} (i) If $E: \D\to \C_{2n}$ is a fiber functor then we have $O(\D)\le n$;

(ii) If $E: \D\to \C_{2n+1}$ is a fiber functor then we have $o(\D)\le n$.
\end{proposition}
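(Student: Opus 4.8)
The plan is to combine Proposition \ref{pro1} (resp. Proposition \ref{pro2}) with the computation of the Frobenius order of the categories $\C_n$ in Proposition \ref{becat}. First, for part (i): suppose $E:\D\to \C_{2n}$ is a fiber functor. By Proposition \ref{pro1} we have $O(\D)\le O(\C_{2n})$. By Proposition \ref{becat}, $O(\C_{2n})=[\frac{2n+1}{2}]=n$. Hence $O(\D)\le n$, as claimed. Part (ii) is entirely parallel: if $E:\D\to \C_{2n+1}$ is a fiber functor, then by Proposition \ref{pro2} we have $o(\D)\le o(\C_{2n+1})$, and by Proposition \ref{becat}, $o(\C_{2n+1})=[\frac{2n+1}{2}]=n$, so $o(\D)\le n$.

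The only point requiring any care is the applicability of Propositions \ref{pro1} and \ref{pro2}: these are stated for symmetric tensor functors $E:\C\to\D$ between symmetric tensor categories with finitely many simple objects, and the Frobenius order and lower Frobenius order are defined for such categories. Since $\D$ is assumed to have finitely many simple objects and $\C_{2n}$ (resp. $\C_{2n+1}$) is a finite tensor category, both hypotheses are met. A fiber functor is in particular a symmetric tensor functor, so Propositions \ref{pro1} and \ref{pro2} apply directly.

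I do not anticipate any real obstacle here: the statement is essentially an immediate corollary of the monotonicity of (lower) Frobenius order under symmetric tensor functors together with the explicit values $O(\C_{2n})=n$ and $o(\C_{2n+1})=n$. The proof is a two-line deduction in each case, so I would present it as such.

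\begin{proof} (i) Since $E:\D\to \C_{2n}$ is a symmetric tensor functor and both categories have finitely many simple objects, Proposition \ref{pro1} gives $O(\D)\le O(\C_{2n})$. By Proposition \ref{becat}, $O(\C_{2n})=[\frac{2n+1}{2}]=n$, so $O(\D)\le n$.

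(ii) Similarly, Proposition \ref{pro2} gives $o(\D)\le o(\C_{2n+1})$, and by Proposition \ref{becat}, $o(\C_{2n+1})=[\frac{2n+1}{2}]=n$, so $o(\D)\le n$.
\end{proof}
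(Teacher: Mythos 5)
Your proof is correct and is exactly the paper's argument: the paper deduces (i) from Proposition \ref{pro1} together with Proposition \ref{becat}, and (ii) from Proposition \ref{pro2} together with Proposition \ref{becat}, just as you do. Your computation $O(\C_{2n})=[\frac{2n+1}{2}]=n$ and $o(\C_{2n+1})=[\frac{2n+1}{2}]=n$ matches the values given in Proposition \ref{becat}.
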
 

\begin{proof} (i) This follows from Proposition \ref{pro1} and Proposition \ref{becat}. 

(ii) This follows from Proposition \ref{pro2} and Proposition \ref{becat}.
\end{proof} 

We expect that the converse to Proposition \ref{critfib} is true. Namely, we propose the following conjecture, which is a full analog of Deligne's theorem (\cite{De2}) for finite tensor categories in characteristic $2$. 

\begin{conjecture} \label{con2}
A finite tensor category $\D$ over $\k$ admits a fiber functor to $\C_{2n+1}$ if and only if $o(\D)\le n$ and 
to $\C_{2n}$ if and only if $O(\D)\le n$. 
\end{conjecture}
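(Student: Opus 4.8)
The ``only if'' directions of both equivalences are already contained in Proposition~\ref{critfib}, so the plan is to prove the two ``if'' statements: if $O(\D)\le n$ then $\D$ admits a fiber functor to $\C_{2n}$, and if $o(\D)\le n$ then $\D$ admits a fiber functor to $\C_{2n+1}$. The case $n=0$ is already known: $O(\D)=0$ means $\D$ is Frobenius exact, so it has a fiber functor to $\Ver_2=\Vec_\k=\C_0$ by Theorem~\ref{Kth}, while $o(\D)=0$ means $\D$ is almost Frobenius exact, i.e.\ by definition admits a fiber functor to $\V=\C_1$. So the content is the passage to general $n$.

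The route I would take reduces the conjecture to the (expected, and in special cases known) characteristic-$2$ analogue of Deligne's theorem for finite tensor categories: $(\ast)$ every finite symmetric tensor category over a field $\k$ of characteristic $2$ admits a fiber functor to $\C_N$ for some $N\ge 0$. Granting $(\ast)$, given $\D$ one takes the fiber functor $\D\to\C_N$ furnished by $(\ast)$ and corestricts it to the tensor subcategory of $\C_N$ generated by its image; by \cite{BE}, Corollary~2.6 this subcategory is one of the $\C_m$ with $m\le N$, and one obtains a \emph{surjective} symmetric tensor functor $E\colon\D\to\C_m$. It then remains to control $m$ by the Frobenius order. For this I would prove the counterpart, for surjective functors, of Propositions~\ref{pro1} and~\ref{pro2}: if $E\colon\C\to\D$ is a surjective symmetric tensor functor of symmetric tensor categories with finitely many simple objects, then $O(\D)\le O(\C)$ and $o(\D)\le o(\C)$. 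Combined with Proposition~\ref{becat}, which gives $O(\C_m)=[\frac{m+1}{2}]$ and $o(\C_m)=[\frac{m}{2}]$, this forces $m\le 2n$ when $O(\D)\le n$ (as $[\frac{m+1}{2}]\le n$ fails for $m=2n+1$) and $m\le 2n+1$ when $o(\D)\le n$; in either case $\C_m$ lies inside $\C_{2n}$, respectively $\C_{2n+1}$, and $E$ gives the desired fiber functor. Together with Proposition~\ref{critfib} this would establish the conjecture.

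I expect two genuinely nontrivial steps. The first, and the main obstacle, is $(\ast)$: it is precisely the full strength of a characteristic-$2$ Deligne theorem for finite tensor categories, of which only special cases are currently available --- Frobenius exact categories (Theorem~\ref{Kth}), categories with Chevalley property (Theorem~\ref{subca2}), and the further cases treated in \cite{BEO} --- so, given the machinery of this paper, Conjecture~\ref{con2} is essentially equivalent to $(\ast)$ together with Proposition~\ref{critfib}. The second is the surjective monotonicity lemma: since $F$ is neither left nor right exact, one cannot simply track composition factors of $F$ applied to subquotients, and the argument must go through the Frobenius--Perron dimension characterization of $\C_{\rm ex}$ (Theorem~\ref{equiva}(iv)), the ``exactness in the middle'' of the functors $G_i$ (Proposition~\ref{longex}), and the fact that $\C_{\rm ex}$ is closed under subquotients (Proposition~\ref{frobex}); concretely I would reduce to simple objects and compare Frobenius--Perron dimensions across a projective generator of $\D$ and its image under $E$. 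As an intermediate target within reach of the present techniques one could settle the conjecture for \emph{integral} finite $\D$, where $(\ast)$ is expected to follow by the methods of \cite{EG2,EG3}; a more self-contained induction on the Frobenius order (realizing a category of order $n$ as a suitable ``one-step'' extension, mirroring $F(\C_{2n})=\C_{2n-2}$, of one of order $n-1$ and building the fiber functor level by level via Theorem~\ref{subca2}) seems to require essentially the same structural input on the tower $\{\C_m\}$, so I would regard the reduction to $(\ast)$ as the natural approach.
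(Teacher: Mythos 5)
This statement is Conjecture~\ref{con2}: the paper does not prove it, but explicitly leaves it open as the full characteristic-$2$ analogue of Deligne's theorem, recording only the ``only if'' directions via Proposition~\ref{critfib}. Your proposal, to its credit, is candid about this, but it is therefore not a proof: the ``if'' directions are reduced to the statement $(\ast)$ that every finite symmetric tensor category over $\k$ in characteristic $2$ fibers over some $\C_N$, and $(\ast)$ is precisely the open Deligne-type problem of which the paper only proves special cases (Theorem~\ref{Kth} for Frobenius exact categories, Theorem~\ref{subca2} for the Chevalley-type situation). Moreover, since $O(\D)$ is always finite (Corollary~\ref{index1}), Conjecture~\ref{con2} itself immediately implies $(\ast)$; so your reduction essentially restates the conjecture modulo a bookkeeping step rather than making progress on it.

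There is also a second genuine gap inside that bookkeeping step: the ``surjective monotonicity'' lemma you invoke, namely $O(\D)\le O(\C)$ and $o(\D)\le o(\C)$ for a \emph{surjective} symmetric tensor functor $E:\C\to\D$, is asserted but not proved, and it does not follow from the routes you sketch. Proposition~\ref{pro1} gives only the opposite inequality $O(\C)\le O(\D)$ (a bound on the source, not the target), and the obstruction to your version is exactly the non-exactness of $F$: an object $Y\in\D$ is merely a subquotient of some $E(X)$, and the composition factors of $F^n(Y)$ are not controlled by those of $F^n(E(X))=E(F^n(X))$, because the $6$-periodic sequence of Proposition~\ref{longex} dominates $G_i$ of the \emph{middle} term of a short exact sequence by the outer terms, not the outer terms by the middle one; so the $\FPdim$/closure-under-subquotients argument via Theorem~\ref{equiva}(iv) and Proposition~\ref{frobex} does not obviously go through, and it is not even clear that the lemma is true. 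Until either $(\ast)$ together with this lemma, or some independent construction of the fiber functor, is actually established, your argument does not yield the conjecture; as it stands it is a plausible reduction plan consistent with the statement's status in the paper as an open problem.
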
 

\section{Appendix A. Hilbert series of graded commutative algebras in symmetric tensor categories.} 

The main goal of this appendix is to prove Corollary \ref{c3} which is used in the proof of Theorem \ref{Kth}. 

\begin{proposition}\label{p1} Let $\C$ be a symmetric tensor category with finitely many simple objects. Let $X\in \C$ and $d_i={\rm FPdim}(S^iX)$. 
Then the series $f_X(z)=\sum_{i\ge 0} d_iz^i$ has radius of convergence $1$ unless it is a polynomial. 
\end{proposition}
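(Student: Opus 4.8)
The plan is to estimate the growth of the dimensions $d_i = \FPdim(S^iX)$ from two sides. For the lower bound on the radius of convergence, note that $S^iX$ is a subquotient of $X^{\otimes i}$, so $\FPdim(S^iX)\le \FPdim(X)^i$ (using that $\FPdim$ is additive on short exact sequences and multiplicative on tensor products, and that it is monotone under passing to subobjects since all simple objects have $\FPdim\ge 1$). Hence $\limsup d_i^{1/i}\le \FPdim(X)<\infty$, so $f_X(z)$ has positive radius of convergence $\rho$; we must show $\rho\le 1$ unless $f_X$ is a polynomial.

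For the upper bound on $\rho$, I would pass to the associated graded commutative algebra $A=SX=\bigoplus_i S^iX$ in $\C$, which is a finitely generated graded commutative algebra generated in degree $1$, and study its Hilbert series. The key point is that $f_X(z)$ is the "$\FPdim$-Hilbert series" of $A$, i.e. $f_X(z)=\sum_i \FPdim(A_i)z^i$ where $A_i=S^iX$. First I would reduce to an algebra over $\Vec_\k$: choose a fiber-functor-like device is not available in general, so instead I would use that the regular representation / a projective generator $P$ allows one to compare with $\Hom(\mathbf 1, A_i\otimes P)$, or more directly apply the Frobenius-Perron eigenvector. Concretely, let $d=(d_X)_{X\in\mathrm{Irr}\,\C}$ be the Frobenius-Perron eigenvector of the (left) multiplication matrices; then $\FPdim(A_i)=\langle [A_i], d\rangle$ where $[A_i]\in\Gr(\C)$. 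Since $A$ is finitely generated, the classes $[A_i]$ satisfy a linear recursion coming from a finite free resolution, or more elementarily: $A$ is a quotient of a free (polynomial) algebra $S(X)$ and a module over the image of $\Gr(\C)$ in a finite-dimensional algebra, so $\sum_i [A_i]z^i$ is a rational function of $z$ with denominator dividing $\prod(1-z^{a_j})$ for suitable generator degrees $a_j$ (all equal to $1$ here). Pairing with $d$, we get that $f_X(z)$ is rational with poles only at roots of unity.

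It then remains to show that if $f_X$ is not a polynomial, it has a pole at $z=1$, forcing $\rho\le 1$ (and in fact $\rho=1$ by the first paragraph). For this I would use positivity: the coefficients $d_i=\FPdim(S^iX)$ are nonnegative reals, and a rational function with nonnegative coefficients whose poles lie on the unit circle must have a pole at $z=1$ among its poles of maximal order — this is a Pringsheim-type argument. If $f_X$ were a rational non-polynomial function with no pole at $z=1$, its radius of convergence would still be $\le 1$, but I want to rule out $\rho>1$, which the pole-at-a-root-of-unity structure already does: a non-polynomial rational function with all poles on $|z|=1$ has radius of convergence exactly $1$. The main obstacle I anticipate is making the rationality of $f_X(z)$ rigorous in the categorical setting: one needs that a finitely generated graded commutative algebra $A$ in $\C$ has $[A_i]\in\Gr(\C)$ growing in a "rational" way, which requires either a Noetherianity/Hilbert-basis argument for commutative algebras in $\C$ (so that $A$ is a finitely generated module over a polynomial subalgebra — a Noether normalization in $\C$) or a direct argument that the multiplication-by-generators maps eventually make $\dim$ of $\Hom(\mathbf 1, A_i\otimes -)$ satisfy a linear recursion. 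I would prove a Noether-normalization-type statement: $A$ is module-finite over a free polynomial subalgebra on finitely many degree-$1$ elements of a projective generator, reducing the Hilbert series computation to the classical graded case over $\Vec_\k$, where rationality with denominator a power of $(1-z)$ is standard, and then transport back via $\FPdim$ being a ring homomorphism $\Gr(\C)\to\R$.
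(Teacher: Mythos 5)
Your plan runs into two problems, one of orientation and one that is fatal. First, you have the trivial and hard directions reversed: since the Frobenius--Perron dimension of any nonzero object is at least $1$, every nonzero coefficient $d_i$ satisfies $d_i\ge 1$, so $\limsup_i d_i^{1/i}\ge 1$ and the radius of convergence is automatically $\le 1$ whenever $f_X$ is not a polynomial; this is the one-line part. The real content of the proposition is the bound $\rho\ge 1$, i.e.\ that ${\rm FPdim}(S^iX)$ grows subexponentially, and your first paragraph only gives $\rho\ge 1/{\rm FPdim}(X)$, which is far weaker. You do notice that your rationality claim, if true, would yield $\rho=1$ in one stroke, but this is exactly where the fatal gap lies: the rationality of $f_X(z)$ for a general symmetric tensor category in characteristic $p$ is an \emph{open problem} --- it is precisely Conjecture \ref{con3} of this paper, known for Frobenius exact categories (e.g.\ those fibering over ${\rm Ver}_p$) but unknown already for the Benson--Etingof categories $\C_n$. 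The classical tools you invoke to get it --- Hilbert's syzygy theorem, a Noether normalization making $A$ module-finite over a polynomial subalgebra, Noetherianity of $SX$ --- are simply not available in this generality; finite generation and Hilbert-basis-type results are only known in special cases (e.g.\ symmetric fusion categories, \cite{V}), and being a quotient of $SX$ gives no linear recursion on the classes $[S^iX]\in {\rm Gr}(\C)$. So the route through rationality with poles at roots of unity cannot be completed with known results.

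For comparison, the paper avoids rationality entirely and argues by a self-improving functional inequality: one reduces to $X$ semisimple (indeed to $X$ the sum of all simples, using ${\rm FPdim}(S^i{\rm gr}\,X)\ge {\rm FPdim}(S^iX)$), observes that $S^2X$ has a filtration with associated graded contained in $mX$ for some integer $m$, and deduces the coefficientwise bound $f_X(z)\le (1+d_1z)f_X(z^2)^m$. This forces the radius of convergence $R$ to satisfy $R\ge R^{1/2}$, and since $R\ge 1/d_1>0$ (as $S^iX$ is a quotient of $X^{\otimes i}$), one concludes $R\ge 1$. If you want to salvage your approach, you would have to restrict to categories where rationality is known, which defeats the purpose; the elementary inequality above is what makes the statement available in the generality needed for Corollary \ref{c3} and Theorem \ref{Kth}.
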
 

\begin{proof} 
Since $d_i=0$ or $d_i\ge 1$ for all $i$, if $f_X(z)$ is not a polynomial then its radius of convergence 
$R\le 1$. So it suffices to show that $R\ge 1$.  

Since ${\rm FPdim}(S^i{\rm gr}X)\ge {\rm FPdim}({\rm gr}S^iX)= {\rm FPdim}(S^iX)$ for any filtered object $X$, it 
 suffices to prove the statement for $X$ being semisimple. 
 Thus it suffices to take $X$ to be the direct sum of all simple objects of $\C$. Then $S^2X$ 
has a filtration whose associated graded is contained in $mX$ for some integer $m$. Hence 
\begin{equation}\label{ineq1}
{\rm FPdim}S^{2i}X\le {\rm FPdim}S^iS^2X\le {\rm FPdim}(S^i(mX)).
\end{equation}
Let $f_X^\pm$ 
be the sum of all the even (resp. odd) terms of $f_X$. Then inequality \eqref{ineq1} implies 
$$
f_X^+(z)\le f_X(z^2)^m,
$$
where $\le $ is coefficientwise. Since $S^{2i+1}X$ is a quotient of $X\otimes S^{2i}X$, we also have 
$$
f_X^-(z)\le d_1zf_X(z^2)^m.
$$
Thus 
\begin{equation}\label{ineq2}
f_X(z)\le (1+d_1z)f_X(z^2)^m.
\end{equation} 
Let $R$ be the radius of convergence of $f_X$. Inequality \eqref{ineq2} implies that \linebreak $R\ge R^{1/2}$. Also $R\ge 1/d_1$ (as $S^iX$ is a quotient of $X^{\otimes i}$), so $R\ne 0$. 
 Thus $R\ge 1$, as claimed. 
\end{proof} 

\begin{corollary}\label{c2} Let $A=\oplus_{i\ge 0}A_i$ be a $\Bbb Z_+$-graded finitely generated commutative algebra in $\C$ with $A_0=\be$, and let $f(z)=\sum_{i\ge 0} {\rm FPdim}(A_i)z^i$. Then $f(z)$ has radius of convergence $1$ unless it is a polynomial. 
\end{corollary}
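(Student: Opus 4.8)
The plan is to bound $f$ above by the Hilbert series of a free graded-commutative algebra and then invoke Proposition \ref{p1}. If $A=\be$ then $f(z)=1$ is a polynomial and there is nothing to prove, so assume $A\neq \be$. Since $A$ is finitely generated as an algebra over $A_0=\be$, there is an integer $N\ge 1$ such that $A$ is generated by the subobject $V:=A_1\oplus\cdots\oplus A_N$, which I regard as a graded object of $\C$ with $A_k$ placed in degree $k$. Because $\C$ is symmetric and $A$ is commutative, the tautological morphism $V\to A$ extends to a \emph{surjective} homomorphism of graded algebras $S(V)\twoheadrightarrow A$ from the free commutative algebra $S(V)=\bigoplus_n S^nV$; hence ${\rm FPdim}(A_i)\le {\rm FPdim}(S(V)_i)$ for every $i$, i.e.\ $f(z)\le g(z)$ coefficientwise, where $g(z):=\sum_{i\ge 0}{\rm FPdim}(S(V)_i)z^i$.

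Next I would compute $g$. Using the canonical isomorphism $S(V\oplus W)\cong S(V)\otimes S(W)$ valid in any symmetric tensor category, we get $S(V)\cong\bigotimes_{k=1}^N S(A_k)$ as graded algebras, with $S^i A_k$ sitting in degree $ik$. Multiplicativity of ${\rm FPdim}$ under $\otimes$ (and additivity under $\oplus$) then gives $g(z)=\prod_{k=1}^N f_{A_k}(z^k)$, where $f_{A_k}(z)=\sum_{i\ge 0}{\rm FPdim}(S^iA_k)z^i$ is exactly the series treated in Proposition \ref{p1}. By that proposition each $f_{A_k}$ has radius of convergence at least $1$ (it equals $1$ unless $f_{A_k}$ is a polynomial, in which case it is entire); consequently each $f_{A_k}(z^k)$, and hence their finite product $g(z)$, converges for $|z|<1$. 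Combined with the coefficientwise bound $f\le g$, this shows the radius of convergence of $f$ is at least $1$.

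For the reverse inequality: if $f$ is not a polynomial then infinitely many of the coefficients ${\rm FPdim}(A_i)$ are nonzero, and each nonzero coefficient is $\ge 1$ since the Frobenius-Perron dimension of a nonzero object is at least $1$; hence $\limsup_i {\rm FPdim}(A_i)^{1/i}\ge 1$ and the radius of convergence is at most $1$. Together with the previous paragraph this forces the radius to be exactly $1$. There is no genuine obstacle here, since all the analytic content is already in Proposition \ref{p1}; the only points needing a little care are that finite generation lets us take the generating object inside a finite truncation $\bigoplus_{k\le N}A_k$ with the induced map $S(V)\to A$ graded and surjective, and the grading bookkeeping that turns the Hilbert series of $S(V)$ into the product $\prod_{k=1}^N f_{A_k}(z^k)$ rather than a single substitution $f_V(z)$.
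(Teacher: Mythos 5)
Your proof is correct and follows essentially the same route as the paper, which simply observes that $A$ is a quotient of the free commutative algebra $S(V)$ on a positively graded generating object and invokes Proposition \ref{p1}. Your explicit decomposition $S(V)\cong\bigotimes_{k=1}^N S(A_k)$ with Hilbert series $\prod_{k=1}^N f_{A_k}(z^k)$ is just a clean way of carrying out the grading bookkeeping the paper leaves implicit.
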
  

\begin{proof} As before, it is clear that if $f$ is not a polynomial then its radius of convergence $R$ is $\le 1$. Thus the statement follows from Proposition \ref{p1} and the fact that $A$ is a quotient of $SX$ 
for some positively graded object $X\in \C$.
\end{proof} 

%\begin{remark}\label{rema} Let $\k \subset K$ be a field extension. Then Proposition \ref{p1} and Corollary \ref{c2}
%hold true, with the same proofs, for the category $\C_K$ (see Section \ref{scalars}) even if we don't know whether this category is tensor (i.e. rigid), as the rigidity is not used in the proofs.
%\end{remark}

\begin{corollary}\label{c3} Let $A$ be a commutative ind-algebra in $\C$ such that 
$$
\Hom(\be,A)=K
$$ 
is
a field, and the pairing 
$$
\Hom_A(A,Y\otimes A)\times \Hom_A(Y\otimes A,A)\to \Hom_A(A,A)=K
$$
is nondegenerate for each $Y\in \C$. Then for each $Y\in \C$ we have 
$$
\dim_K\Hom_A(A,Y\otimes A)\le {\rm FPdim}(Y).
$$ 
\end{corollary}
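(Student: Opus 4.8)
The plan is to reduce the statement to a Hilbert-series estimate and apply Corollary \ref{c2}. First I would observe that, because $A$ is a commutative ind-algebra with $\Hom(\be,A)=K$ a field, the graded object $\Gr A$ associated to a suitable filtration of $A$ is again a commutative ind-algebra, and the crucial step is to exhibit enough finitely generated graded commutative subalgebras whose Hilbert series control the dimensions $\dim_K\Hom_A(A,Y\otimes A)$. Concretely, for a fixed $Y\in\C$, consider the symmetric algebra $SY=\oplus_{i\ge 0}S^iY$ in $\C$; a morphism $Y\to A$ extends to an algebra map $SY\to A$, and more generally the module $\Hom_A(A,Y\otimes A)=\Hom(Y,A)$ is where one wants to bound the dimension. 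The idea is that iterated multiplication inside $A$, together with the nondegeneracy hypothesis on the pairing, forces the subspaces $\Hom(S^iY,A)$ (or rather their images under multiplication in $A$) to grow at least like $\dim_K\Hom(\be,S^i(\text{something of FPdim }\dim_K\Hom(Y,A)))$; if $\dim_K\Hom_A(A,Y\otimes A)$ were strictly larger than ${\rm FPdim}(Y)$, this growth would outstrip the radius-of-convergence bound from Corollary \ref{c2}.

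In more detail, here are the key steps in order. (1) Set $V=Y\otimes A$ as an $A$-module and let $N:=\dim_K\Hom_A(A,V)=\dim_K\Hom(Y,A)$; the nondegeneracy hypothesis says the pairing of $\Hom_A(A,V)$ with $\Hom_A(V,A)$ into $K$ is perfect, so also $\dim_K\Hom_A(V,A)=N$. (2) Using that $A$ is commutative, the multiplication maps give $\Hom_A(A,Y^{\otimes i}\otimes A)$ a structure compatible with $\Hom$-spaces for tensor powers, and one gets $\dim_K\Hom_A(A, S^iY\otimes A)\ge$ (some expression built from $N$), by choosing $N$ "independent" elements of $\Hom(Y,A)$ and symmetrizing — here nondegeneracy of the trace pairing is what guarantees the symmetrized products do not all vanish. (3) On the other hand $\Hom_A(A,S^iY\otimes A)=\Hom(S^iY,A)$ has $K$-dimension at most ${\rm FPdim}(S^iY)$ times a constant, since $A$ has a finitely generated graded model with Hilbert series of radius of convergence $1$ by Corollary \ref{c2}, and $\Hom(S^iY,A)$ injects into $\Hom(\be,(S^iY)^*\otimes A)$ whose dimension is governed by that series. (4) Comparing the lower bound from (2), which grows exponentially with base $N$ if $N>{\rm FPdim}(Y)$ (roughly like the Hilbert series of $S(\k^N)$ evaluated suitably), against the subexponential-with-radius-$1$ bound from (3) and Proposition \ref{p1} applied to an object of Frobenius–Perron dimension ${\rm FPdim}(Y)$, forces $N\le{\rm FPdim}(Y)$.

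The main obstacle I anticipate is step (2): making precise the claim that $N$ linearly independent elements of $\Hom_A(A,Y\otimes A)$ produce, after multiplication in $A$ and projection to the symmetric power, a subspace of $\Hom_A(A,S^iY\otimes A)$ of dimension growing like $\binom{N+i-1}{i}$ (the dimension of $S^i$ of an $N$-dimensional space). The nondegeneracy of the pairing $\Hom_A(A,Y\otimes A)\times\Hom_A(Y\otimes A,A)\to K$ is exactly the tool that should prevent degeneration — it plays the role that a nondegenerate trace form plays in separating products — but one must set up the bookkeeping carefully, probably by dualizing: choose $f_1,\dots,f_N\in\Hom_A(A,Y\otimes A)$ and dual $g_1,\dots,g_N\in\Hom_A(Y\otimes A,A)$ with $\langle f_a,g_b\rangle=\delta_{ab}$, and show that the iterated products $(g_{b_1}\otimes\cdots)$ paired against symmetrized $(f_{a_1}\cdots)$ give a nonsingular Gram-type matrix indexed by multisets. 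Once that combinatorial independence is in hand, the collision of exponential growth against Corollary \ref{c2} closes the argument; the remaining steps are routine manipulations with FPdim and $\Hom$-spaces in $\C$.
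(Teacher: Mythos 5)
There is a genuine gap, in fact two. First, your step (3) is essentially circular: bounding $\dim_K\Hom(S^iY,A)$ by a constant times ${\rm FPdim}(S^iY)$ is precisely (a family of instances of) the statement you are trying to prove, since $\Hom(S^iY,A)=\Hom_A(A,(S^iY)^{*}\otimes A)$ up to duals. Corollary \ref{c2} only controls Hilbert series of \emph{finitely generated graded} commutative algebras in $\C$; it says nothing directly about $\Hom$-spaces into the big ind-algebra $A$, which has no ``finitely generated graded model.'' The paper's proof gets around this by fixing a finite-dimensional $K$-subspace $E\subset\Hom_A(A,Y\otimes A)$, choosing a finite-length subobject $Z\subset A$ containing the images of $E\otimes\be$, and working only with the subalgebra $A_Z$ generated by $Z$, filtered by putting $Z$ in degree $1$; Corollary \ref{c2} then legitimately applies to $\gr(A_Z)$.

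Second, and more fatally, your step (4) cannot close even if (2) and (3) were repaired. The lower bound you aim for, $\dim S^i$ of an $N$-dimensional space $=\binom{N+i-1}{i}$, grows only polynomially in $i$ (degree $N-1$), never exponentially; while the only control Proposition \ref{p1}/Corollary \ref{c2} give on ${\rm FPdim}(S^iY)$ is that its generating series has radius of convergence $1$, i.e.\ subexponential growth. A polynomial lower bound cannot contradict a subexponential upper bound, no matter how $N$ compares with ${\rm FPdim}(Y)$ (you would need, say, ${\rm FPdim}(S^iY)=O(i^{{\rm FPdim}(Y)-1})$, which is not available). The mechanism that actually works is different: nondegeneracy of the pairing makes the $A$-module map $E\otimes A\to Y\otimes A$ \emph{split} injective, hence injective after restriction to the filtration, giving $E\otimes F^{i-1}A_Z\hookrightarrow Y\otimes F^iA_Z$ and therefore the coefficientwise inequality $\dim(E)\,z\,g(z)\le{\rm FPdim}(Y)\,g(z)$ for $g(z)=\sum_i{\rm FPdim}(F^iA_Z)z^i$. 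If $\dim(E)>{\rm FPdim}(Y)$ this ratio inequality forces $g$ to grow geometrically, contradicting radius of convergence $1$ from Corollary \ref{c2} (in the paper one simply evaluates at $z=1-\varepsilon$ and lets $\varepsilon\to 0$). This route also avoids entirely your step (2): no multiplicative independence of symmetrized products is needed, which is good, because the Gram-matrix argument you flag as the main obstacle is not supplied and the nondegeneracy hypothesis for a single $Y$ does not obviously yield it.
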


\begin{proof} 
We will work in the category $\C_K$. 
Let $E\subset \Hom_A(A,Y\otimes A)$ be a finite dimensional $K-$subspace. Then we have a natural morphism 
$E\otimes A\to Y\otimes A$ which is split by nondegeneracy, in particular injective. Let $Z\subset A$ 
be a   
%finite dimensional 
subobject such that $Z\in \C_K$ (as opposed to $Z\in \Ind(\C_K)$) and  
the image of $E\otimes \be$ in $Y \otimes A$ is contained in $Y\otimes Z$. Let $A_Z$ be the subalgebra of $A$ generated by $Z$. Then we have an injection $E\otimes A_Z\to Y\otimes A_Z$. Put a filtration on $A_Z$ by defining the degree of $Z$ to be $1$. 
Then we have an injection $E\otimes F^{i-1}A_Z\to Y\otimes F^iA_Z$ (as $Y$ sits in degree zero). Thus we have 
$$
\dim (E)zg(z)\le {\rm FPdim}(Y)g(z), 
$$
where $g(z):=\sum_{i\ge 0} {\rm FPdim}(F^iA_Z)z^i$. That is, we have  
\begin{equation}\label{equa1}
({\rm FPdim}(Y)-\dim (E)z)g(z)\ge 0. 
\end{equation}
Now, the algebra ${\rm gr}(A_Z)$ is finitely generated, 
and we have $g(z)=\frac{f(z)}{1-z}$, where $f(z):=\sum_{i\ge 0}{\rm FPdim}({\rm gr}(A_Z)_i)z^i$. 
 Thus by Corollary \ref{c2}, the left hand side of \eqref{equa1} converges in the open unit disk, hence can be evaluated at $z=1-\varepsilon$ for any $\varepsilon>0$. Thus we have 
$$
{\rm FPdim}(Y)-\dim (E)(1-\varepsilon)\ge 0. 
$$
Sending $\varepsilon$ to zero, we then get ${\rm FPdim}(Y)-\dim (E)\ge 0$, as claimed. 
\end{proof} 

\begin{conjecture}\label{con3} $f_X(z)$ is a rational function in $z$. 
\end{conjecture}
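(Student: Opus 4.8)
The natural first step is to promote the statement to the level of the Grothendieck ring. Write $\Sigma_X(z):=\sum_{i\ge 0}[S^iX]\,z^i\in {\rm Gr}(\C)[[z]]$; since $\FPdim$ is additive on short exact sequences, $f_X(z)=\FPdim(\Sigma_X(z))$, so it suffices to prove that $\Sigma_X(z)$ is a rational function, i.e. lies in $R\otimes_\BZ\Q(z)$ where $R:={\rm Gr}(\C)\otimes_\BZ\Q$ (a finite-dimensional commutative $\Q$-algebra, as $\C$ has finitely many simples), and then to check that the $\Q$-algebra homomorphism $\FPdim\colon R\to\BR$ does not annihilate the resulting denominator --- something that can be guaranteed by producing the denominator in a form whose leading coefficient is invertible (compare the toy computation in $\Rep(SL_2)$, where $\Sigma_V(z)=[V_0]\,(1-[V_1]z+z^2)^{-1}$ and $\FPdim(1-[V_1]z+z^2)=(1-z)^2\ne 0$). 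Rationality of $\Sigma_X$ over the finite-dimensional algebra $R$ is equivalent to the sequence $([S^iX])_{i\ge0}$ satisfying a linear recurrence expressing $[S^iX]$ through $[S^{i-1}X],\dots,[S^{i-N}X]$ with coefficients in ${\rm Gr}(\C)$; thus the whole problem is to exhibit such a recurrence.

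To produce one, I would exploit the $p$-th power (Frobenius) structure of the symmetric algebra $SX=\bigoplus_{i\ge0}S^iX$, a finitely generated $\BZ_+$-graded commutative algebra in $\C$. Since $\C$ is symmetric of characteristic $p$, the $p$-th power map is additive and defines an algebra map $\Fr\colon(SX)^{(1)}\to SX$; its image $(SX)^{[p]}$ is a graded subalgebra sitting in degrees divisible by $p$, and is a quotient of $S(X^{(1)})$. The crucial claim to establish is that $SX$ is \emph{finite} as a module over $(SX)^{[p]}$, i.e. the ``restricted symmetric algebra'' $W_X:=SX/(SX)^{[p]}_{+}\cdot SX$ has finite length in $\C$. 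Granting this, $SX\cong (SX)^{[p]}\otimes_\k W_X$ as graded $(SX)^{[p]}$-modules and as objects of $\C$, which, after taking classes and tracking the grading, yields a functional equation $\Sigma_X(z)=Q_X(z)\cdot\Fr^*\bigl(\Sigma_{X'}(z^p)\bigr)$, where $Q_X(z):=\sum_e[(W_X)_e]\,z^e$ is a polynomial, $X'\in\C$ is the Frobenius-twist object and $\Fr^*$ the induced ring endomorphism of ${\rm Gr}(\C)$. Iterating this identity and telescoping (as happens explicitly for $\C=\Rep(G)$, where it collapses after applying $\FPdim$ to $f_X(z)=(1-z)^{-\dim X}$) should display $\Sigma_X$ as a rational function; in the \emph{finite} case one can shortcut the iteration, using Theorem \ref{Kth} to reduce to the rationality of Hilbert series of finitely generated commutative algebras in $\Rep(\mathcal G)$ for an affine group scheme $\mathcal G$ in $\Ver_p$ --- a Noether-normalization statement over $\Ver_p$ --- while incorporating, via Corollary \ref{index1}, that all composition factors of $S^iX$ for $i\gg0$ already lie in the Frobenius-exact part $\C_{\rm ex}$.

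The main obstacle is the finiteness of $SX$ over its Frobenius subalgebra for a general symmetric tensor category of moderate growth: this is transparent for $\C=\Rep(G)$ (where $W_X$ has dimension $p^{\dim X}$) and follows for finite $\C$ from the existence of a fiber functor to $\Ver_p$ (Theorem \ref{Kth}), but in general it appears essentially as deep as the moderate-growth version of Deligne's theorem, which is itself open. A secondary difficulty is that the functional equation links $X$ to its twist $X'$ rather than to $X$ itself, so converting the infinite iteration into an honest rational function --- rather than merely an infinite product of polynomials that happens to telescope in the $\Rep(G)$ case --- requires controlling the polynomials $Q_{X'},Q_{X''},\dots$ and the iterates of $\Fr^*$; the convergence estimate of Proposition \ref{p1} gives only a first, non-quantitative foothold here. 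I therefore expect a complete proof to need either the moderate-growth Deligne theorem or a genuinely new finiteness argument for $W_X$, with the finite case being accessible now along the lines indicated above.
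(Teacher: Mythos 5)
The statement you have attacked is Conjecture \ref{con3}: the paper does not prove it. It only remarks that the conjecture holds when $\C$ fibers over $\Ver_p$ (equivalently, by Theorem \ref{Kth}, for Frobenius exact categories), because in $\Ver_p$ the algebra $SL_i$ is finite dimensional for $i>1$ (\cite{V}), and it explicitly states that the question is open for the finite categories $\C_n$ of \cite{BE}. Your proposal is likewise not a proof but a strategy sketch, and you say so yourself; to that extent you and the paper are in the same place. However, two concrete points in your sketch are off. First, the ``$p$-th power map'' $\Fr\colon (SX)^{(1)}\to SX$ on which the whole functional equation $\Sigma_X(z)=Q_X(z)\cdot\Fr^*\bigl(\Sigma_{X'}(z^p)\bigr)$ rests does not exist in a general symmetric tensor category: the natural receptacle of $p$-th powers is governed by the functor $F_1$ (in $\Rep(G)$ one has $F_1(X)=X^{(1)}$, which is why the construction is ``transparent'' there), and $F_1$ is neither exact nor even nonzero on the relevant objects in general --- e.g.\ $F_1(L_2)=0$ in $\Ver_p$ by Example \ref{FofLi}, consistent with $SL_2$ being finite dimensional and having no polynomial Frobenius subalgebra at all. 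So there is no canonical candidate for $X'$, and the proposed recursion is not available outside the Tannakian situation; the fact that $SX$ can be finite dimensional shows the intended dichotomy (finite $W_X$ over a polynomial Frobenius part) is not the right general picture.

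Second, your claim that the finite case is ``accessible now'' via Theorem \ref{Kth} is incorrect: Theorem \ref{Kth} gives a fiber functor to $\Ver_p$ only for \emph{Frobenius exact} finite categories, whereas the categories $\C_n$ of \cite{BE} are finite and not Frobenius exact, and these are exactly the examples for which the paper says the conjecture remains unknown. Corollary \ref{index1} tells you that high Frobenius iterates land in $\C_{\rm ex}$, but it gives no control over the composition series of $S^iX$ itself, so it does not reduce the Hilbert series of $SX$ to the Frobenius exact part. In short: your first paragraph (passing to $\Gr(\C)\otimes\Q$, seeking a linear recurrence, and checking that $\FPdim$ does not kill the denominator) is a reasonable framing, but the key finiteness/functional-equation step fails as stated in general, and the shortcut you propose for finite $\C$ begs the question. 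The statement stands as a conjecture both in the paper and after your attempt.
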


\begin{remark} Conjecture \ref{con3} clearly holds for $\C={\rm Ver}_p$, since the algebra $SL_i$ is finite dimensional for any $i>1$ (see e.g. \cite{EOV}). Thus it holds for any symmetric tensor category that fibers over ${\rm Ver}_p$, i.e. for Frobenius exact categories. But we don't know if it holds for the categories $\C_n$ 
from \cite{BE} for general $n$. 
\end{remark} 

\section{Appendix B: Simplicity and exactness of algebras in finite tensor categories}

The goal of this appendix is to prove Theorem \ref{NZ}, which is used in the proof of Theorem \ref{Kth}. 
It is also interesting in its own right, in particular provides a positive answer to Question 2.15 in \cite{EG}. 

Recall that an algebra $A$ in a finite tensor category $\C$ is called {\it exact} if the category ${}_A\C$ of left $A$-modules in $\C$ is an exact right $\C$-module category or, equivalently, the category $\C_A$ of right $A$-modules in $\C$ is an exact left $\C$-module category
 (\cite{EGNO}, Definition 7.8.20). 

\begin{theorem} \label{NZ}
Let $\C$ be a finite tensor category and let $A\in \C$ be an indecomposable
algebra. Then $A$ is exact if and only if the following conditions hold:

(a) $A$ is simple (i.e. it has no nontrivial two-sided ideals);

(b) there exists $X\in \C$ and an embedding of left $A-$modules 
$$
{}^*A\hookrightarrow A\ot X.
$$
\end{theorem}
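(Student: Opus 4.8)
The plan is to prove both directions separately, making heavy use of the structure theory of exact module categories and their duals from \cite{EGNO}, Chapters 7 and 4. First, for the forward direction, suppose $A$ is exact. The simplicity condition (a) should follow from the fact that for an exact algebra, the category ${}_A\C_A$ of $A$-bimodules is a finite multitensor category (it is the dual category of $\C$ with respect to the exact module category $\C_A$, twisted appropriately), and indecomposability of $A$ means that $A$ is the unit object of a \emph{tensor} (not just multitensor) category; a nontrivial two-sided ideal would then be a nontrivial subobject of the unit, which is impossible in a tensor category. For condition (b), the key observation is that exactness of $A$ forces ${}_A\C$ to be an exact $\C$-module category, hence all its objects are projective-relative, and in particular the rigid dual ${}^*A$ (which exists as a left $A$-module because $A$-bimodules are rigid) must embed into a relatively projective object; free modules $A\otimes X$ are relatively projective and cogenerate, so one gets the embedding ${}^*A\hookrightarrow A\otimes X$ for a suitable $X$. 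The more delicate point here is setting up the bimodule duality correctly and checking that ${}^*A$, computed in ${}_A\C_A$, really does have the claimed form as a left module.

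For the converse — which I expect to be the main obstacle — assume (a) and (b) hold; we must show $\C_A$ is an exact left $\C$-module category, i.e.\ that for every projective $P\in\C$ and every $M\in\C_A$, the module $P\otimes M$ is projective in $\C_A$. The strategy is: first use (b) to produce a surjection in the other direction. Dualizing the embedding ${}^*A\hookrightarrow A\otimes X$ gives a surjection of \emph{right} $A$-modules $X^*\otimes A\twoheadrightarrow A^{**}\cong A$ (using rigidity in $\C$ and that the functors $-\otimes A$, $\operatorname{Hom}$ behave well), so $A$ is a quotient of a free right $A$-module in a way that splits after tensoring — more precisely, I want to deduce that $A$ is a \emph{projective} object of $\C_A$, equivalently that the forgetful functor $\C_A\to\C$ has the property that its left adjoint $-\otimes A$ sends projectives to projectives and that the counit is split. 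This is where simplicity (a) enters: by a standard argument (cf.\ the theory of separable/Frobenius-type algebras, or \cite{EGNO}, Section 7.8), a simple algebra $A$ equipped with an embedding ${}^*A\hookrightarrow A\otimes X$ admits a "partial" coevaluation-type map making the multiplication $A\otimes A\to A$ split as an $A$-bimodule map, or at least making $A$ a direct summand of $A\otimes Y\otimes A$ for some $Y$; this is precisely the kind of "quasi-Frobenius" condition that yields exactness.

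Once $A$ is known to be projective in $\C_A$, exactness follows quickly: every object of $\C_A$ is a quotient of a free module $Y\otimes A$, and $P\otimes(Y\otimes A)\cong(P\otimes Y)\otimes A$ is free, hence projective; since $P\otimes-$ is exact and $\C_A$ has enough projectives, one concludes $P\otimes M$ is projective for all $M$ (using that a quotient of a projective by something with projective kernel... — actually one argues that $P\otimes-$ preserves projectives directly on frees and then that every module is a summand of a free once we know projectivity of $A$, because $M$ is a summand of $M\otimes A$ via the split counit). Finally, the application to Question 2.15 of \cite{EG} is immediate: that question asks whether simplicity of $A$ together with the module-embedding condition characterizes exactness, and Theorem \ref{NZ} answers it affirmatively. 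The hardest step is genuinely the passage from the one-sided embedding condition (b) plus simplicity to projectivity of $A$ in $\C_A$; I would look for the right bimodule splitting, likely by composing the given embedding with its dual to build an idempotent in $\operatorname{End}_{A\text{-}A}(A\otimes Z\otimes A)$ whose image is $A$.
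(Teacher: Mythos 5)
Your forward direction is essentially the paper's argument (simplicity of $A$ because it is the unit object of the dual category ${}_A\C_A$; condition (b) because in the exact module category ${}_A\C$ injectives coincide with projectives, so ${}^*A$ embeds into an injective object, which is a direct summand of a free module $A\ot P$), and it is fine once "free modules cogenerate" is phrased via injective hulls. The converse, however, rests on intermediate claims that are false. First, "$A$ is projective in $\C_A$" does not follow from (a)+(b), nor even from exactness: take $A=\be$ in any non-semisimple finite tensor category; then (a) and (b) hold (with $X=\be$) and $A$ is exact, but $\be$ is not projective in $\C_{\be}=\C$. Second, your fallback target --- an idempotent exhibiting $A$ as an $A$-bimodule direct summand of $A\ot Y\ot A$ --- is also unattainable: take $\C=\Rep(\alpha_p)$ in characteristic $p$ and $A=\O(\alpha_p)\cong \k[t]/(t^p)$ with the translation action. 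This algebra is simple in $\C$, exact (since $\C_A\simeq \Vec_\k$), hence satisfies (a) and (b); but after applying the fiber functor, $A\ot Y\ot A$ becomes a direct sum of copies of the free bimodule $A\ot A$, so such a splitting would force $\k[t]/(t^p)$ to be separable, which it is not. Third, the final step "$M$ is a summand of $M\ot A$ via the split counit" is wrong: the unit map $m\mapsto m\ot 1$ is not a morphism of right $A$-modules, so the action map $M\ot A\to M$ need not split in $\C_A$. Note also that your two claims taken together (projectivity of $A$ in $\C_A$, which makes every free module $Z\ot A$ projective, plus the splitting $M\mid M\ot A$) would make every object of $\C_A$ projective, i.e.\ $\C_A$ semisimple --- already false for $A=\be$. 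So the proposal proves too much, and the passage from (a)+(b) to exactness genuinely cannot go through any separability-type splitting.

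The paper's converse avoids splitting $A$ off a free (bi)module altogether. From (b) alone it shows that $P\ot A$ is an \emph{injective} object of $\C_A$ for every projective $P\in\C$ (dualize: ${}^*A\ot {}^*P$ is injective in ${}_A\C$ and embeds into the projective object $A\ot X\ot {}^*P$, hence splits off and is projective). From (a) it shows that for nonzero $N\in \C_A$ the bimodule map $A\to {}^*N\ot N$ is injective, so $Q\ot A\hookrightarrow Q\ot {}^*N\ot N$ for projective $Q$; since $Q\ot A$ is injective it is a direct summand, and thus $P\ot N$ acquires a nonzero projective summand for a suitable projective $P$. Exactness is then forced by a counting argument in the split Grothendieck group: the element $R=\sum_i \FPdim(X_i)[P_i]$ satisfies $[X]R=\FPdim(X)R$, and comparing the Frobenius--Perron mass of projective summands on the two sides of $[P]R[M]=\FPdim(P)R[M]$ shows $R[M]$ has no non-projective summands, i.e.\ $P_i\ot M$ is projective for all $i$. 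This injectivity lemma plus the dimension count is the content your outline is missing.
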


\begin{proof} Assume that $A$ is exact. Then $A$ regarded as an $A-$bimodule is the unit
object of the dual category to $\C$ and hence is simple (see \cite[Lemma 3.24]{EO}); thus
(a) holds. Also injective objects in the category ${}_A\C$ are projective (see \cite[Corollary 3.6]{EO})
and any projective is a direct summand of $A\ot P$ for some projective object $P\in \C$; this implies (b).

Conversely, let $A$ be an algebra satisfying (a) and (b). We would like to show that for any 
$M\in \C_A$ and projective $P\in \C$ the object $P\ot M\in \C_A$ is projective. 

\begin{lemma}\label{1and2} (i) For any projective $P\in \C$ the object $P\ot A\in \C_A$ is injective.

(ii) If $N\in \C_A$ is nonzero then there is projective $P\in \C$ such that $P\ot N$ contains
a nonzero projective object of $\C_A$ as a direct summand.
\end{lemma} 

\begin{proof} (i) By (b) ${}^*A\ot {}^*P \subset A\ot X\ot {}^*P$. Since ${}^*A\ot {}^*P\in {}_A\C$ is injective,
${}^*A\ot {}^*P$ is a direct summand of $A\ot X\ot {}^*P$. Since $A\ot X\ot {}^*P\in {}_A\C$ is 
projective, ${}^*A\ot {}^*P$ is projective; dually, $P\ot A\in \C_A$ is injective. This proves (i). 

(ii) We have a map of $A-$bimodules $A\to {}^*N\ot N$. Using (a) we see that this map is injective.
Thus for any projective $0\ne Q\in \C$ we have an embedding of right $A-$modules
$Q\ot A\subset Q\ot {}^*N\ot N$. By (i), a projective object $Q\ot A\in \C_A$ is a direct summand
of $Q\ot {}^*N\ot N$, so we get the desired result with $P=Q\ot {}^*N$. This proves (ii). 
\end{proof} 

Let $\Gamma(\A)$ be the {\bf split} Grothendieck group of an additive
category $\A$; we also consider $\Gamma_\BR(\A):=\Gamma(\A)\ot_\BZ \BR$. Then $\Gamma(\C)$ is a ring acting on the group $\Gamma(\C_A)$. 

Let $\{ X_i\}_{i\in I}$ be the set of isomorphism classes of simple objects in $\C$. For any $i$ let $P_i\in \C$ be the projective cover of $X_i$. The following result is standard.

\begin{lemma}\label{iii} (\cite{EGNO}, Proposition 6.1.11) 
The element 
$$R:=\sum_{i\in I}\FPdim(X_i)[P_i]\in \Gamma_\BR(\C)$$
 satisfies 
$$[X]R=\FPdim(X)R\ \forall X\in \C.$$
\end{lemma} 

%\begin{proof} We have
%$$[X]R=\sum_{i\in I}\FPdim(L_i)[X\ot P_i]=\sum_{i,j\in I}\FPdim(L_i)\dim \Hom(X\ot P_i,L_j)[P_j]=$$
%$$\sum_{i,j\in I}\FPdim(L_i)\dim \Hom(P_i, {}^*X\ot L_j)[P_j]=
%\sum_{i,j\in I}\FPdim(L_i)[{}^*X\ot L_j:L_i][P_j]=$$
%$$\sum_{j\in I}\FPdim({}^*X\ot L_j)[P_j]=\FPdim(X)R.$$
%\end{proof} 

Now let $M\in \C_A$ and consider $R[M]\in \Gamma_\BR(\C_A)$. We can write 
$$
R[M]=\N+\cP
$$ 
  where $\N$ is a sum of the classes of indecomposable non-projective objects and 
$\cP$ is a sum of the classes of indecomposable projective objects.  
%$\N$ is the sum of all non-projective summands and $\cP$ is the sum of all projective summands. 
Assume that $\N \ne 0$. 
Let $P$ be as in Lemma \ref{1and2}(ii). Then $[P]\N$ will contain some projective summands; also
$[P]\cP$ is projective. Thus the contribution of projective summands into $\FPdim([P]R[M])$
is strictly more than 
$$
\FPdim([P]\cP)=\FPdim(P)\FPdim(\cP).
$$ 
But on the other hand by Lemma \ref{iii}
we have 
$$
[P]R[M]=\FPdim(P)R[M]=\FPdim(P)\N+\FPdim(P)\cP,
$$ 
so the contribution of
projective summands is precisely $\FPdim(P)\FPdim(\cP)$. This is a contradiction. Thus
$\N=0$, so $P_i\ot M$ is projective for any $i\in I$ and the theorem is proved.
\end{proof}

We now get a positive answer to Question 2.15 in \cite{EG}:

\begin{corollary} Let $\C$ be a tensor subcategory of a finite tensor category $\D$.
If $A\in \C$ is exact, then $A$ regarded as an algebra in $\D$ is also exact.
\end{corollary}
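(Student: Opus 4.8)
The plan is to deduce this immediately from the intrinsic characterization of exact algebras provided by Theorem~\ref{NZ}. That theorem says that an indecomposable algebra in a finite tensor category is exact if and only if (a) it is simple and (b) its dual ${}^*A$ embeds, as a left $A$-module, into $A\otimes X$ for some object $X$. Both conditions are phrased purely in terms of the monoidal structure, the duality, the notion of two-sided ideal (i.e.\ a subobject of $A$ stable under both multiplications), and the lattice of subobjects of $A$. Since $\C$ is a tensor subcategory of $\D$, it is full, closed under $\otimes$ and duals, and closed under subobjects; hence all of this data for $A$ is the same whether computed in $\C$ or in $\D$, and so conditions (a), (b) transfer from $\C$ to $\D$.

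In more detail, I would first reduce to the case that $A$ is indecomposable. A decomposition of $A$ into indecomposable algebras corresponds to a complete system of orthogonal central idempotents in $\End_\C(A)=\End_\D(A)$, so it is the same in $\C$ and in $\D$; and an algebra is exact if and only if each of its indecomposable factors is exact (a direct summand of an exact module category is exact, and a finite direct sum of exact module categories is exact). Thus we may assume $A$ is indecomposable in $\C$, hence also in $\D$. Now Theorem~\ref{NZ} applied in $\C$ gives (a) and (b) inside $\C$. For (a): any two-sided ideal of $A$ in $\D$ is in particular a subobject of $A$ in $\D$, hence lies in $\C$ since $\C$ is closed under subobjects, hence is a two-sided ideal of $A$ in $\C$ and therefore equals $0$ or $A$; so $A$ has no nontrivial two-sided ideals in $\D$. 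For (b): the inclusion $\C\hookrightarrow\D$ is a fully faithful exact monoidal functor, so it identifies ${}^*A$ computed in $\C$ with ${}^*A$ computed in $\D$, sends the left $A$-module $A\otimes X$ to the corresponding one in $\D$, and carries the monomorphism ${}^*A\hookrightarrow A\otimes X$ of left $A$-modules to a monomorphism of left $A$-modules in $\D$ (exact functors preserve monomorphisms). So (a) and (b) of Theorem~\ref{NZ} hold for $A$ in $\D$, whence $A$ is exact in $\D$.

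I do not expect any serious obstacle here: essentially all of the work is already contained in Theorem~\ref{NZ}, which converts the ``global'' module-category condition into one ``local'' to the algebra. The only points requiring a moment's care are that ``tensor subcategory'' is being used in the sense that is closed under subobjects (so that two-sided ideals cannot leave $\C$), and that the dualities and $A$-module structures appearing in condition (b) genuinely coincide in $\C$ and in $\D$ because the inclusion is a fully faithful monoidal functor — both of which are routine. As an aside, one could try to prove the corollary directly by comparing the module categories ${}_A\C$ and ${}_A\D$, but these are genuinely different categories and such an approach seems strictly harder than going through Theorem~\ref{NZ}.
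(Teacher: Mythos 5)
Your proof is correct and follows the paper's own route: the paper likewise deduces the corollary from Theorem~\ref{NZ} by observing that conditions (a) and (b) are preserved under the inclusion $\C\hookrightarrow\D$. Your explicit reduction to the indecomposable case and the verification that ideals, duals, and the embedding of (b) transfer are details the paper leaves implicit ("clearly preserved"), but the argument is the same.
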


\begin{proof} Both conditions (a) and (b) of Theorem \ref{NZ} 
are clearly preserved by the inclusion functor $\C\hookrightarrow \D$.
\end{proof}

\begin{remark} We recall that an injective (i.e. fully faithful) tensor functor $\C \hookrightarrow \D$ is an equivalence
with a tensor (in particular, abelian) subcategory of $\D$, see \cite[Proposition 6.3.1]{EGNO}. Thus
a fully faithful tensor functor between finite tensor categories sends an exact algebra to an exact one.
\end{remark}

Observe that if condition (a) of Theorem \ref{NZ} holds, condition (b) is equivalent to the requirement that $A^*\ot_A{}^*A\ne 0$. If this fails then in the category $\C_A$ any morphism from an injective object to a projective object must vanish. Thus it seems plausible that condition (b) is a consequence of condition (a). In other words, one can make the following 

\begin{conjecture}\label{exactsimple} An indecomposable algebra $A\in \C$ is exact if and only if it is simple. 
\end{conjecture}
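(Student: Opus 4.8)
\begin{remark}
Here is the line of attack we would pursue for Conjecture \ref{exactsimple}. By Theorem \ref{NZ} the only thing to prove is the implication (a)$\Rightarrow$(b): that a simple indecomposable algebra $A$ in a finite tensor category $\C$ admits an embedding of left $A$-modules ${}^*A\hookrightarrow A\ot X$ for some $X\in\C$. By the observation preceding the conjecture, under (a) this is equivalent to $A^*\ot_A{}^*A\ne 0$, and hence --- by the same observation --- to the existence of a single nonzero morphism in $\C_A$ from an injective object to a projective object. So the entire problem is to construct such a morphism out of the simplicity of $A$.

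Simplicity turns out to do most of the remaining work once one nonzero map has been found; in particular it suffices to produce a nonzero morphism from \emph{some} injective object $M\in\C_A$ to the regular right module $A$ (which is a projective object of $\C_A$). Indeed, the sum $T\subseteq A$ of the images of all morphisms $M\to A$ in $\C_A$ is a right submodule, and in fact a two-sided ideal, because if $f\colon M\to A$ is right $A$-linear then so is $x\mapsto a\cdot f(x)$ for every $a\in A$; since $T\ne 0$, simplicity forces $T=A$, so a finite direct sum of copies of $M$ surjects onto $A$, the surjection splits by projectivity of $A$, and thus $A$ is a direct summand of an injective object --- so $A$ is itself injective, whence (b) (with $\mathrm{id}_A$ as the required morphism) and then exactness by Theorem \ref{NZ}. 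To find the initial nonzero map one would naturally try the coevaluation bimodule map $A\to{}^*N\ot N$ from the proof of Lemma \ref{1and2}(ii), which is injective precisely because $A$ is simple, and attempt to split off a free summand of ${}^*N\ot N$; but that splitting is exactly the step which in the proof of Theorem \ref{NZ} is supplied by condition (b).

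That step --- equivalently the non-vanishing $A^*\ot_A{}^*A\ne 0$ for simple $A$ --- is the crux, and is why the statement is left as a conjecture: it does not follow formally from simplicity, and over a general finite tensor category it is a genuine Frobenius-type constraint on the algebra $A$. A proof would seem to require one of the following: a sufficiently explicit description of simple algebras in finite tensor categories; a Frobenius--Perron dimension argument in the spirit of the proof of Theorem \ref{NZ}, run with the element $R$ of Lemma \ref{iii} but organized so as not to presuppose (b); or a reduction to the case $\C=\Rep(H)$ for a finite-dimensional quasi-Hopf algebra $H$, followed by an analysis of simple $H$-module algebras. We do not know how to carry any of these through.
\end{remark}
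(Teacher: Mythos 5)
You are right that this statement is a conjecture: the paper offers no proof of it, only the reformulation you quote (under (a), condition (b) of Theorem \ref{NZ} is equivalent to $A^*\ot_A{}^*A\ne 0$, i.e.\ to the existence of a nonzero morphism in $\C_A$ from an injective object to a projective one), together with a proof of the special case $\C=\Rep H$ for a finite dimensional Hopf algebra $H$. Your identification of the crux --- deducing (b) from simplicity --- agrees with the paper's own discussion, and of your three suggested avenues the third is the one the paper actually carries out in that special case: the missing input there is Skryabin's projectivity theorem \cite{Sk}, which guarantees that every $A\rtimes H$-module is projective over a simple module algebra $A$, whence exactness. So as an assessment of the state of the problem your proposal is accurate, and it makes no claim it cannot support.

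There is, however, a concrete error in your proposed reduction. The regular right module $A$ is in general \emph{not} projective in $\C_A$: the projectives of $\C_A$ are the direct summands of $P\ot A$ with $P\in\C$ projective, and already for $A=\be$ in a non-semisimple $\C$ the regular module fails to be projective. Hence the step ``the surjection splits by projectivity of $A$'' breaks down, as does the subsequent use of $\mathrm{id}_A$ as a nonzero morphism from an injective to a projective. (The ideal argument is also written with elements; categorically one should take $T$ to be the sum of the images of all module maps $A\ot Q\to A$ with $Q$ injective in $\C_A$, noting that $A\ot Q$ is again injective.) The reduction you were aiming for is nevertheless salvageable without any appeal to projectivity of $A$: if $f\colon M\to A$ is nonzero with $M$ injective in $\C_A$ and $P\in\C$ is a nonzero projective, then $P\ot M$ is injective (as $P\ot-$ has an exact left adjoint, tensoring with a dual of $P$), $P\ot A$ is projective, and $\mathrm{id}_P\ot f\ne 0$ since tensoring with a nonzero object is faithful; this nonzero morphism from an injective to a projective yields $A^*\ot_A{}^*A\ne 0$ and hence (b) under (a). But this repaired reduction does not advance the conjecture, since producing such an $f$ from simplicity alone is exactly the open point; so your proposal correctly locates the difficulty but, beyond the flawed intermediate step, adds nothing to what the paper already records.
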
 

Just the ``if" direction of Conjecture \ref{exactsimple} requires proof, as the ``only if" direction is provided by Theorem \ref{NZ}. 

\begin{proposition} Conjecture \ref{exactsimple} holds if $\C=\Rep H$ for a finite dimensional Hopf algebra $H$. 
\end{proposition}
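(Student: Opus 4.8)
The plan is to prove Conjecture \ref{exactsimple} in the case $\C=\Rep H$ by reducing the abstract statement to a concrete statement about the Hopf algebra $H$ itself, and then verifying condition (b) of Theorem \ref{NZ} directly. By Theorem \ref{NZ}, since the ``only if'' direction is known, it suffices to show that every simple indecomposable algebra $A\in \Rep H$ satisfies (b), i.e.\ admits an embedding of left $A$-modules ${}^*A\hookrightarrow A\otimes X$ for some $X\in \Rep H$. As noted in the remark preceding the conjecture, when (a) holds this is equivalent to $A^*\otimes_A {}^*A\neq 0$, so the plan is to show this nonvanishing.

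First I would translate the setup into Hopf-algebraic language. An algebra in $\Rep H$ is an $H$-module algebra $A$, and being simple as an object of $\Rep H$ (no nontrivial two-sided ideals stable under both multiplication and the $H$-action) means $A$ is an $H$-simple module algebra. The key classical fact to invoke here is a theorem on $H$-simple module algebras: for $H$ finite-dimensional, an $H$-simple $H$-module algebra with trivial (or semisimple) coinvariants is, up to the relevant notion of equivalence, closely controlled — in particular one can use the structure theory connecting $H$-module algebras to the smash product $A\# H$, whose modules are the $A$-modules in $\Rep H$. The category $\C_A$ of right $A$-modules in $\Rep H$ is equivalent to the category of modules over $A\# H$ (or a suitable variant), and the exactness of $A$ as an algebra is then the statement that $A\# H$, or the relevant module category, behaves like that over a Frobenius-type algebra. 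So I would reformulate: $A$ is exact iff the module category $\C_A$ is an exact $\Rep H$-module category, which via the smash product becomes a finiteness/Frobenius condition one can check.

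The main technical step — and the one I expect to be the main obstacle — is establishing the embedding ${}^*A\hookrightarrow A\otimes X$ (equivalently $A^*\otimes_A{}^*A\neq 0$) from $H$-simplicity alone. The natural approach is: since $H$ is finite-dimensional it has a nonzero integral, hence is Frobenius (indeed a Frobenius algebra), and this Frobenius structure should be transported to $A\# H$ or used to show that injective objects in $\C_A$ are projective. Concretely, I would show that $A\# H$ is a Frobenius algebra (or at least that the regular bimodule and its dual are related by an invertible bimodule twist), using that $A$ is finite-dimensional in $\Vec$ (objects of $\Rep H$ are finite-dimensional) together with the antipode of $H$ and the integral; from a Frobenius structure on $A\# H$ one reads off that the projective and injective objects of $\C_A$ coincide, and then the argument in the proof of Lemma \ref{1and2}(i) runs in reverse to produce the embedding in (b). The delicate point is that $A$ need not be Frobenius or even quasi-Frobenius as a plain algebra, so the Frobenius property must come from the interplay of $A$'s $H$-simplicity with the Frobenius property of $H$ — this is where a structure theorem for $H$-simple module algebras (e.g.\ that $A$ is, as an $H$-module, a ``twisted'' form forcing $A^*\cong A$ up to tensoring with an invertible object) is essential.

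Once the embedding in (b) is in hand, Theorem \ref{NZ} immediately yields that $A$ is exact, completing the ``if'' direction and hence the proposition. I would also remark that an alternative, possibly cleaner route is to invoke the known classification of exact module categories over $\Rep H$ (they correspond to comodule algebras, equivalently to certain $H$-module algebras) and match it against $H$-simple module algebras directly; but the Frobenius-structure approach via $A\# H$ seems the most self-contained given only the tools developed in this paper.
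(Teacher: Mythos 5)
Your overall framing (reduce to verifying exactness of the module category of $A$-modules in $\Rep H$, pass to the smash product $A\rtimes H$) matches the paper's setup, but the proposal has a genuine gap: the one step that carries all the content is exactly the step you defer to an unnamed ``structure theorem for $H$-simple module algebras.'' The paper's proof rests on a specific such theorem, namely Skryabin's result (\cite{Sk}, Theorem 3.5) that for an $H$-simple module algebra $A$ every $A\rtimes H$-module is projective upon restriction to $A$. Granting that, the argument is short and does not even pass through condition (b) of Theorem \ref{NZ}: for $X\in\C_A=A\rtimes H\text{-mod}$, projectivity of $X$ over $A$ gives that $X\otimes H\cong X\otimes_A(A\rtimes H)$ is projective over $A\rtimes H$, so tensoring with the projective generator $H$ of $\Rep H$ sends every object to a projective, which is the definition of exactness. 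Your proposal never supplies this (or any equivalent) input, so the claimed nonvanishing $A^*\otimes_A{}^*A\neq 0$ is not actually established.

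Moreover, the concrete substitute you suggest --- that $A\# H$ should be a Frobenius algebra because $H$ is Frobenius --- does not work as stated. What is true in general is that $A\# H$ is a ($\beta$-)Frobenius \emph{extension} of $A$; its self-injectivity as an algebra (equivalently, the coincidence of projectives and injectives in $\C_A$ that you want in order to run Lemma \ref{1and2}(i) backwards) is essentially equivalent to $A$ itself being quasi-Frobenius. But quasi-Frobeniusness of $A$ is precisely the nontrivial consequence of $H$-simplicity that has to be proved (it is part of Skryabin's circle of results), and you explicitly acknowledge not having it. So the Frobenius-of-$H$ argument is circular at the decisive point, and without citing or proving Skryabin-type projectivity the proof does not close.
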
 

\begin{proof} Let $A\in \C$ be a simple algebra, and let us show that the right $\C$-module category $A-{\rm mod}_\C$ is exact. Since $\C=\Rep H$, we may view $A$ as an $H$-module algebra. We have $A-{\rm mod}_\C=A\rtimes H$-mod. 

   Now let $X\in A-{\rm mod}_\C=A\rtimes H$-mod. By \cite{Sk}, Theorem 3.5, the restriction functor 
$${\rm Res}: A\rtimes H-{\rm mod}\to A-{\rm mod}$$ lands in the subcategory of projective 
$A$-modules. Thus $X$ is a projective $A$-module. Hence $X\otimes H=X\otimes_A(A\rtimes H)$ is a projective $A\rtimes H$-module. Thus tensoring with $H$ takes any object $X\in A-{\rm mod}_\C$ to a projective object. 
Hence, $A-{\rm mod}_\C$ is an exact right $\C$-module category, i.e., $A$ is an exact algebra, as claimed. 

\begin{comment}
Now let $X$ be a finite dimensional $A$-module. Then $X\otimes H$ is also an $A\rtimes H$-module. By \cite{Sk}, Theorem 3.5, the restriction functor 
$${\rm Res}: A\rtimes H-{\rm mod}\to A-{\rm mod}$$ lands in the subcategory of projective 
$A$-modules. Thus $X$ is a projective $A$-module. Hence $X\otimes H$ is a projective $A\rtimes H$-module. Thus tensoring with $H$ takes any object $X\in A-{\rm mod}_\C$ to a projective object. 
Hence, $A-{\rm mod}_\C$ is an exact right $\C$-module category, i.e., $A$ is an exact algebra, as claimed. 
\end{comment}
\end{proof}


\begin{thebibliography}{999999}
\bibitem[Be]{Be} D. Benson, Representations of elementary abelian $p$-groups and vector bundles, Cambridge tracts in Mathematics, v.208, 2017. 

\bibitem[BE]{BE} D. Benson, P. Etingof, Symmetric tensor categories in characteristic 2, 
Advances in Mathematics, v. 351, pp. 967--999, 2019, arXiv:1807.05549. 

\bibitem[BEO]{BEO} D. Benson, P. Etingof, V. Ostrik, Symmetric tensor categories in characteristic $p$, in progress.  

\bibitem[Co]{Co} K.~Coulembier, Tannakian categories in positive characteristic, arXiv:1812.02452. 

\bibitem[DEN]{DEN} Autoequivalences of Tensor Categories Attached to Quantum Groups at Roots of 1, in :{\em Lie Groups, Geometry, and Representation Theory}, Progress in Mathematics, v.326, pp. 109--136, 2018. 

\bibitem[DMNO]{DMNO} A. Davydov, M. M\"uger, D. Nikshych, V. Ostrik, The Witt group of non-degenerate braided fusion categories, Journal f\"ur die reine und angewandte Mathematik, v. 677 (2013), p. 135--177, arXiv:1009.2117. 

\bibitem[De1]{Desym} P. Deligne,
La cat\'egorie des repr\'esentations du groupe sym\'etrique
$S_t$, lorsque $t$ n'est pas
un entier naturel, in: Algebraic Groups and Homogeneous Spaces, in: Tata Inst.
Fund. Res. Stud. Math., Tata Inst. Fund. Res., Mumbai, 2007, 209--273. 

\bibitem[De2]{De2} P. Deligne, Cat\'egories tannakiennes. In:  ``The Grothendieck Festschrift, Vol. II'' Progr. Math. 87 (1990), 111--195.

\bibitem[De3]{De3} P. Deligne, Cat\'egories tensorielles, Mosc. Math. J., 2002, Volume 2,	Number 2,	 Pages 227--248.

\bibitem[E]{E} P. Etingof, Koszul duality and the PBW theorem in symmetric tensor categories in positive characteristic, Advances in Mathematics, Special Issue in honor of David Kazhdan, v. 327, p. 128--160, 2018, arXiv:1603.08133. 

\bibitem[EG1]{EG} P.~Etingof, S.~Gelaki,
Exact sequences of tensor  categories with respect to a module category,
arXiv:1504.01300.

\bibitem[EG2]{EG2} P. Etingof, S. Gelaki, 
Finite symmetric integral tensor categories with the Chevalley property, arXiv:1901.00528. 

\bibitem[EG3]{EG3} P. Etingof, S. Gelaki, 
Finite symmetric tensor categories with the Chevalley property in characteristic 2, 
arXiv:1904.07576.

\bibitem[EGNO]{EGNO} P. Etingof, S. Gelaki, D. Nikshych, V. Ostrik, 
Tensor categories, AMS, 2015, \url{http://www-math.mit.edu/~etingof/egnobookfinal.pdf}

\bibitem[EO1]{EO} P.~Etingof and V.~Ostrik,
Finite tensor  categories, 
\textit{Moscow Math.\ Journal} \textbf{4} (2004), 627--654.

\bibitem[EO2]{EOsemi} P. Etingof, V. Ostrik, On semisiplification of tensor categories,  arXiv:1801.04409. 

\bibitem[EOV]{EOV} P. Etingof, V. Ostrik, S. Venkatesh, Computations in symmetric fusion categories in characteristic $p$, Int. Math. Res. Not. (2017), arXiv:1512.02309. 

\bibitem[Ha]{Ha} N. Harman, Stability and periodicity in the modular representation theory of symmetric groups, arXiv:1509.06414.

\bibitem[HPS]{HPS} M.~Hovey, J.~H.~Palmieri, N.~Strickland, Axiomatic stable homotopy theory. Mem. Amer. Math. Soc. 128 (1997), no. 610, 114 pp.

\bibitem[JK]{JK} G.  D.  James  and  A.  Kerber,  ÔThe  Representation  Theory  of  the  Symmetric  Group, Encyclopedia of Mathematics, Vol. 16, Addison Wesley, Reading, MA, 1981.

\bibitem[Ja]{Ja} J. C. Jantzen,
Representations of algebraic groups, 2nd edition, American Mathematical Society, Providence, RI, 2003. 

\bibitem[KS]{KS} M.~Kashiwara, P.~Schapira, Categories and Sheaves, Springer, 2006. 
 
\bibitem[KL4]{KL4} D.~Kazhdan, G.~Lusztig, Tensor structures arising from affine Lie algebras. IV. J. Amer. Math. Soc. 7 (1994), no. 2, 383--453.
 
 \bibitem[K]{K} T.~Klein,  The  multiplication of Schur-functions and extensions of $p-$modules, J. London Math. Soc. 43 (1968), 280--284.

\bibitem[M]{M} I. Macdonald, Symmetric functions and Hall polynomials, Second edition, Oxford University Press, 1995. 
 
\bibitem[O]{O} V. Ostrik,    On   symmetric   fusion   categories   in   positive   characteristic,
arXiv:1503.01492. 

\bibitem[S]{Sk} S. Skryabin, Projectivity and Freeness over Comodule Algebras, 
Transactions of the American Mathematical Society, 
Vol. 359, No. 6 (2007), pp. 2597--2623.  

\bibitem[V]{V} S. Venkatesh, Hilbert Basis Theorem and Finite Generation of Invariants in Symmetric Fusion Categories in Positive Characteristic, International Mathematics Research Notices 2016(16),
DOI10.1093/imrn/rnv305. 
\end{thebibliography}
\end{document}